\newtheorem{thm}{Theorem}[section]
\newtheorem{lemma}[thm]{Lemma}
\newtheorem{defn}[thm]{Definition}
\newtheorem{cor}[thm]{Corollary}
\newtheorem{ques}[thm]{Question}
\newtheorem{prop}[thm]{Proposition}
\newtheorem{rem}{Remark}[section]
\newtheorem{ex}{Example}[section]
\newtheorem{Notation}[thm]{Notation}
\newenvironment{proof}{{\bf Proof}.}{\rule{2.5mm}{2.5mm}}
\begin{document}

\title{\bf A note on knowledge structures delineated by fuzzy skill multimaps\footnote{This work is supported by the Key Program of the Natural Science Foundation of Fujian Province (No: 2020J02043), the National Natural Science Foundation of China (Grant No. 11571158) and Natural Science Foundation of Fujian Province (No.2018J01423).}}
\author{Xiyan Cao~$^1$, \, Fucai Lin~$^{1, 3}$\,\thanks{Email addresses:linfucai@mnnu.edu.cn}, Wen Sun$^2$, Jinjin Li~$^1$\,\\
$^1$ School of Mathematics and Statistics,\\ Minnan Normal University, Zhangzhou 363000, China\\
 $^2$ Department of mathematics, Shantou University, Shantou 515063, China\\
 $^3$ Fujian Key Laboratory of Granular Computing and Applications,\\
Minnan Normal University, Zhangzhou 363000, P. R. China}

\date{}

\maketitle

\begin{center}
\begin{minipage}{140mm}
{\bf Abstract:} {Fuzzy skill multimaps can describe individuals' knowledge states from the perspective of latent cognitive abilities. The significance of discriminative knowledge structure is reducing repeated testing and the workload for students, which responds to the so-called `double reduction policy'. As an application in knowledge assessment, the bi-discriminative knowledge structure is useful for evaluating `relative independence' items or knowledge points. Moreover, it is of great significance that which kind of fuzzy skill multimap delineates a discriminative or bi-discriminative knowledge structure. Moreover, we give a characterization of fuzzy skill multimaps such that the delineated knowledge structures are knowledge spaces, learning spaces and simple closure spaces respectively. Further, there are some interesting applications in the meshing of the delineated knowledge structures and the distributed fuzzy skill multimaps. We also give a more precise analysis with regard to the separability (resp. bi-separability) on which condition the information collected within a local assessment can reflect an assessment at the global level, and which condition any of the local domains can localize from given global assessment.}

\vspace{1mm}

\noindent{\bf Keywords}: fuzzy multimap, knowledge space, simple closure space, learning space, separability, bi-separability.
\end{minipage}
\end{center}
\section{Introduction}
Doignon and Falmagne (1999) introduced the theory of knowledge spaces (KST) which is regarded as a mathematical framework in order to assess the knowledge and suggest for further learning~\cite{falmagne2011learning,ref2,ref7}. In this paper, a field of {\it knowledge} is a non-empty set of items or questions, denoted by $Q$. A subset $H$ of $Q$ is called a {\it knowledge state} whether an individual is capable of solving it in ideal conditions. A family of knowledge states is called a {\it knowledge structure} if it contains~$\emptyset$ and~$Q$~\cite{falmagne2011learning,ref2}.

In KST, there are some method to for construct the knowledge state of an individual and an accurate knowledge structure, such as query-routine~\cite{ref8,ref9,ref10,ref11,ref12,ref13,ref14,ref15} and ps-query routine~\cite{ref16}, which are designed to establish knowledge states and knowledge structures by some experienced experts. Moreover, by analyzing the relationship between items and skills is another effective method to construct knowledge ~\cite{ref9,ref17,ref18,ref19,ref20,ref21}. If $S$ is a finite skills consisting in method, tricks or algorithms etc, then the relationship can be builded by a skill map or a skill multimap.

In real-life case, each individual may have different level of proficiency in skills. In \cite{ref5}, the authors represented the level of proficiency in skills by a fuzzy set of skills. Indeed, they say that a fuzzy skill map~$(Q, S, \tau)$ and a fuzzy skill multimap~$(Q, S, \mu)$ assign each item~$q$ a fuzzy set~$\tau(q)$ and a family~$\mu(q)$ of fuzzy sets of~$S$ respectively, where each fuzzy set~$C$ in~$\mu(q)$ is called a competency. Similar to skill map, a knowledge structure can also be delineated by fuzzy skill maps or fuzzy skill multimaps. In \cite{ref5}, there are some algorithms in order to delineate a knowledge structure via the conjunctive model, the disjunctive model and the competency model.

Knowledge space is an important knowledge structure~\cite{falmagne2011learning}, which implies any two distinct individuals can get the same progressive knowledge state after fully communicating with each other. That is, they can get knowledge state of~$K\bigcup L$ if one state is~$K$ and another is~$L$.

As in the equal importance of knowledge space, simple closure space can be used for diagnosing items individuals can not solve and shrinking the items they may not solve after communication. Indeed, after communication, the items they can not solve become~$K^{c} \bigcap L^{c}$ if the items one can not solve are~$K^{c}$ and another are~$L^{c}$.

As a special knowledge space, learning space can guide individuals following the learning path step by step~\cite{falmagne2011learning}. Moreover, the learning path of learning space is well-graded~\cite{falmagne2011learning}, hence there exists one-by-one steps from~$\emptyset$ to~$Q$.

In summary, fuzzy skill multimap is an important approach which builds up the relationship between fuzzy skills and items. If we assign a family of fuzzy skills to each item, then the fuzzy skill
multimap goes from performance level to competence level. It follows from the algorithms designed in \cite{ref5} that the knowledge structure can be delineated by fuzzy skill multimaps via the competence model. However, there exists a delineated knowledge structure such that it is not a simple closure space or a knowledge space. In \cite[Problem 6 in Chapter 6 ]{falmagne2011learning}, the authors asked readers under which condition on a skill multimap such that the delineated structure is a knowledge space. In general, it is meaningful and valuable to characterize fuzzy skill multimaps such that the delineated knowledge structures are knowledge spaces, simple closure spaces and learning spaces respectively.

Therefore, it is interesting to consider the question: which kind of fuzzy skill multimap can delineate simple closure space, knowledge space or learning space respectively, that is, the following Question~\ref{1-5}.

\begin{ques}\label{1-5}
Under which condition such that the knowledge structures delineated by fuzzy skill multimaps are knowledge space, simple closure space and learning space respectively?
\end{ques}

The discriminative knowledge structure and separability of items are both important basic concepts in KST~\cite{falmagne2011learning}. By introduced in their book~\cite{falmagne2011learning}, the notions may be useful for manufacturing a discriminative knowledge structure~$(Q^{*},\mathscr{K}^{*})$ from a knowledge structure~$(Q,\mathscr{K})$.

\begin{ex}\label{ddd}
Let~$(Q,\mathscr{K})$ be a knowledge structure, where $Q=\{a, b, c, d\}$ and
\begin{equation*}
\begin{aligned}
&\mathscr{K}=\{\emptyset, \{d\}, \{a, c\}, \{a, b, c\}, \{a, d, c\}, Q\}.
\end{aligned}
\end{equation*}
\end{ex}
Then we have
$$\mathscr{K}_{a}=\mathscr{K}_{c}=\{\{a, c\}, \{a, b, c\}, \{a, d, c\}, Q\}.$$
In other word, items~$a$ and~$c$ carry the same information for $\mathscr{K}$. Hence, only one of the two items~$a$ and~$c$ needs to be asked in the test of acquired knowledge of a subject. Since~$\mathscr{K}_{a}=\mathscr{K}_{c}$, knowledge structure~$(Q,\mathscr{K})$ is not discriminative by Definition~\ref{ee}(1). Clearly, we have three notions as follows:
\begin{equation*}
\begin{aligned}
&a^{*}=\{a, c\}, b^{*}=\{b\}, d^{*}=\{d\}.
\end{aligned}
\end{equation*}
Put~$Q^{*}=\{a^{*}, b^{*}, d^{*}\}$. Then we can construct a discriminative knowledge structure
\begin{equation*}
\begin{aligned}
&\mathscr{K}^{*}=\{\emptyset, \{a^{*}\}, \{d^{*}\}, \{a^{*}, b^{*}\}, \{a^{*}, d^{*}\}, Q^{*}\},
\end{aligned}
\end{equation*}
which is induced by~$\mathscr{K}$ on~$Q^{*}$.

It follows that the implication of discriminative knowledge structure is not only to reduce repeated testing, but also to reduce the workload for students, which responds to the so-called `double reduction policy'. In order to continue to standardize off-campus training and improve the level of education in schools, `double reduction policy' aims to effectively reduce the burden of off-campus training and heavy homework for students in the compulsory education, which thoroughly implemented the fifth Plenary Session of the 19th Central Committee of the Communist Party of China and the spirit of the 19th National Congress of the Communist Party of China.

Take arbitrary two items~$q, r\in Q$. The separability of~$q$ and~$r$ means there exists a state which contains item~$r$ without item~$q$, or a state which contains item~$q$ without item~$r$. In other word, the item~$q$ is not a prerequisite of item~$r$ or the item~$r$ is not a prerequisite of item~$q$. The bi-separability of~$q$ and~$r$ means there exists a state which contains item~$r$ without item~$q$, and a state which contains item~$q$ without item~$r$. In other word, there is no prerequisite relationship between~$q$ and~$r$. It can be called bi-discriminative knowledge structure if each item is `independent' from others~\cite{ref4}.

In~\cite{ref4}, Ge and Li have discussed the separability and bi-separability in knowledge structures which are delineated by skill multimaps respectively. In their paper, the concept of bi-separability is regarded as a bidirectional separability in KST. Therefore, it is natural to pose a question: What about the relationship between fuzzy skill multimaps~$(Q, S, \mu)$ and the separability and bi-separabilit) in knowledge structures which are delineated by fuzzy skill multimaps respectively? A question is posed as follows.

\begin{ques}~\label{kkk}
 Are there sufficient and necessary conditions such that the knowledge structures, which are delineated by fuzzy skill multimaps, are discriminative and bi-discriminative respectively. And what are they?
\end{ques}

In~\cite{ref22}, Heller and Repitsch posed the following question in the conclusion section.

\begin{ques}~\label{kk}
(1) Can an assessment at the global level build upon the information collected within a local assessment?

(2) Is it possible to localize a given global assessment to any of the local domains?
\end{ques}

In \cite{ref4}, Ge and Li have discussed Question~\ref{kk} from the separability and bi-separability in knowledge structures which are delineated by skill multimaps respectively. In this paper, we shall discuss Question~\ref{kk} from the separability and bi-separability in knowledge structures which are delineated by fuzzy skill multimaps respectively. In other words, around the meshing of the delineated knowledge structures and distributed fuzzy skill multimaps, we aim to find some conditions such that substructures can inherit the separability and bi-separability from parent knowledge structure, and some conditions such that parent knowledge structure can preserve the separability and bi-separability from substructures respectively.

The paper is organized as follows: section 2 outlines some concepts of KST. The discussion about Questions~\ref{1-5} and ~\ref{kkk} is demonstrated in section 3.1 and section 3.2 respectively. Around the meshing of the delineated knowledge structure and distributed fuzzy skill multimaps, Question~\ref{kk} is discussed in section 3.3. Finally, section 4 summarizes the main results of our research.

\section{Preliminaries}

Some basic concepts and fuzzy sets operators are defined in this section.

\begin{defn}[\cite{ref2}]
{\rm A {\it knowledge structure} is a pair~$(Q,\mathscr{K})$, where $Q$ is a nonempty set and $\mathscr{K}$ is a family of subsets of~$Q$ such that $\mathscr{K}$ contains at least the empty set~$\emptyset$ and~$Q$. We say that~$Q$ is the {\it domain} of the knowledge structure, each element of $Q$ is referred to as {\it items or questions} and each element of $\mathscr{K}$ is labeled {\it knowledge state}. Moreover, we denote the subset $\{K\in \mathscr{K}|q\in K\}$ of $\mathscr{K}$ for any $q\in Q$ by~$\mathscr{K}_{q}$. In particular, if a knowledge structure~$(Q,\mathscr{K})$ is closed under union (resp. intersection), then $\mathscr{K}$ is called a {\it knowledge space} (resp. {\it simple closure space}). Further, a knowledge structure~$(Q,\mathscr{K})$ is called a {\it quasi ordinal space} if it is both a knowledge space and a simple closure space.}
\end{defn}

\begin{defn}[~\cite{falmagne2011learning}]
{\rm A family of sets $\mathscr{F}$ is {\it well-graded} if, for any pair of distinct sets
$K, L\in\mathscr{F}$, there are a finite sequence of states $K=K_{0}, K_{1}, \ldots, K_{p}=L$, where $p=d(K, L)$ and $d(K_{i-1}, K_{i})=1$ for $1\leq i\leq p$.}
\end{defn}

\begin{defn}[~\cite{falmagne2011learning}]
A knowledge structure~$(Q,\mathscr{K})$ is called a learning space if it is a well-graded knowledge space.
\end{defn}

\begin{defn}~\label{ee}
{\rm Assume $(Q, \mathscr{K})$ is a knowledge structure.

\smallskip
(1)~$(Q, \mathscr{K})$ is said to be {\it discriminative}~\cite{ref3} if~$\mathscr{K}_{q}\neq \mathscr{K}_{r}$ for any pair of distinct items~$q, r\in Q$.

\smallskip
(2)~$(Q, \mathscr{K})$ is said to be {\it bi-discriminative}~\cite{ref4} if~$\mathscr{K}_{q}\not\subseteq \mathscr{K}_{r}$ and~ $\mathscr{K}_{r}\not\subseteq \mathscr{K}_{q}$ for any two distinct items $q, r\in Q$.}
\end{defn}

From Definition~\ref{ee}, bi-discriminative knowledge structure is discriminative. However, discriminative knowledge structure is not necessary bi-discriminative. Similar to the introduction of topological spaces, the separability and bi-separability of knowledge structures can be characterized in Definition~\ref{eee} respectively.

\begin{defn}[\cite{ref4}]~\label{eee}
{\rm Assume $(Q,\mathscr{K})$ is a knowledge structure.

\smallskip
(1) We say that $(Q, \mathscr{K})$ is a {\it $T_{0}$-knowledge structure} if for any two distinct items~$q_{1}, q_{2}\in Q$, there exists a state~$K\in \mathscr{K}$ such that $K\cap\{q_{1}, q_{2}\}$ is just an one-point set.

(2)We say that $(Q, \mathscr{K})$ is a {\it $T_{1}$-knowledge structure} if for any two distinct items~$q_{1}, q_{2}\in Q$, there are states~$K, L\in \mathscr{K}$ such that~$K\cap\{q_{1}, q_{2}\}=\{q_{1}\}$ and~$L\cap\{q_{1}, q_{2}\}=\{q_{2}\}$.}
\end{defn}

\begin{lemma}[\cite{ref4}]\label{1-1}
Assume~$(Q,\mathscr{K})$ is a knowledge structure, then the following (1) and (2) hold.

\smallskip
(1)~$(Q, \mathscr{K})$ is a $T_{0}$-knowledge structure iff~$(Q, \mathscr{K})$ is discriminative.

\smallskip
(2)~$(Q, \mathscr{K})$ is a $T_{1}$-knowledge structure iff $(Q, \mathscr{K})$ is bi-discriminative.
\end{lemma}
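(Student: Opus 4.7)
The plan is to prove both equivalences directly from the definitions, since each statement unwraps to a near-tautological reformulation of membership in $\mathscr{K}_{q}$. The key dictionary I would establish up front is the observation that, for any state $K$ and any pair of distinct items $q,r$, having $K\cap\{q,r\}=\{q\}$ is exactly the condition $K\in\mathscr{K}_{q}\setminus\mathscr{K}_{r}$, and symmetrically for the other singleton. Once this is written down, both parts of the lemma reduce to set-theoretic triviality.

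For part (1), I would fix two distinct items $q_1,q_2$ and argue both directions by contrapositive-flavored equivalence. In the forward direction, a $T_0$-separating state $K$ picks out exactly one of the two items, which immediately produces a witness in the symmetric difference $\mathscr{K}_{q_1}\triangle\mathscr{K}_{q_2}$, so the two are distinct; in the reverse direction, $\mathscr{K}_{q_1}\neq\mathscr{K}_{q_2}$ forces (by swapping roles if necessary) some state that contains exactly one of them, which is precisely a $T_0$-separator. No hypothesis beyond the basic knowledge-structure axioms is needed.

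For part (2), I would use the same dictionary but apply it twice. The $T_1$ condition provides two states $K,L$ with $K\cap\{q_1,q_2\}=\{q_1\}$ and $L\cap\{q_1,q_2\}=\{q_2\}$; the first witnesses $\mathscr{K}_{q_1}\not\subseteq\mathscr{K}_{q_2}$ and the second witnesses $\mathscr{K}_{q_2}\not\subseteq\mathscr{K}_{q_1}$, yielding bi-discriminativeness. Conversely, the two non-containments each supply a state sitting in exactly one of $\mathscr{K}_{q_1},\mathscr{K}_{q_2}$, and these are the required $T_1$-separators.

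Honestly, there is no real obstacle here; the only thing to be careful about is the direction of the asymmetry in the $T_0$ case, where one must remember that an unordered singleton $K\cap\{q_1,q_2\}$ can be either $\{q_1\}$ or $\{q_2\}$, and the discriminative conclusion $\mathscr{K}_{q_1}\neq\mathscr{K}_{q_2}$ does not prescribe which of the two strict inclusions fails. Making that case split explicit is the only place I would slow down, and it is what makes $T_0$ strictly weaker than $T_1$ in the analogous way that discriminativeness is strictly weaker than bi-discriminativeness.
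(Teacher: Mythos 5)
Your proof is correct: the dictionary $K\cap\{q_1,q_2\}=\{q_1\}\iff K\in\mathscr{K}_{q_1}\setminus\mathscr{K}_{q_2}$ immediately reduces both equivalences to the definitions, and your handling of the symmetric difference in the $T_0$ case versus the two separate non-containments in the $T_1$ case is exactly right. The paper itself gives no proof of this lemma (it is quoted from Ge and Li~\cite{ref4}), and your argument is the standard definitional unpacking one would expect there, so there is nothing further to compare.
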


It is well known that subspaces of any topological space can inherit~$T_{0}$-separation and $T_{1}$-separation respectively. Similarly, substructures of knowledge structure can inherit separability (resp. bi-separability) from knowledge structure.

\begin{defn}[\cite{ref7}]
{\rm Assume that $(Q, \mathscr{K})$ is a knowledge structure and~$Q'$ is a nonempty subset of~$Q$. Put $\mathscr{K}'=\{K'|K'=K\cap Q', K\in \mathscr{K}\}$. Then~$(Q', \mathscr{K}')$ is a knowledge structure, which is said to be a {\it substructure} of parent structure~$(Q, \mathscr{K})$. Moreover, we say that~$\mathscr{K}'$ is the {\it trace of~$\mathscr{K}$ on~$Q'$}, and is denoted by~$\mathscr{K}|_{Q'}$.}
\end{defn}

\begin{prop}[\cite{ref4}]\label{dd}
Assume~$(Q', \mathscr{K}')$ is a substructure of the knowledge structure~$(Q,\mathscr{K})$. If $(Q,\mathscr{K})$ is discriminative (resp. bi-discriminative), then $(Q',\mathscr{K}')$ is discriminative (resp. bi-discriminative).
\end{prop}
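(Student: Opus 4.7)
The plan is to exploit the topological reformulation of discriminativeness provided by Lemma~\ref{1-1}: $(Q,\mathscr{K})$ is discriminative (resp. bi-discriminative) exactly when it is a $T_{0}$- (resp. $T_{1}$-) knowledge structure. Since the $T_{i}$ conditions are phrased in terms of existence of states that separate a given pair of items, they should transfer to a substructure in a purely set-theoretic manner, parallel to the classical fact that subspaces of $T_{0}$ or $T_{1}$ topological spaces inherit the corresponding separation axiom.

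First I would handle the discriminative case. Fix two distinct items $q_{1},q_{2}\in Q'$. Viewing them as distinct items of $Q$, the $T_{0}$-property of $(Q,\mathscr{K})$ yields some $K\in\mathscr{K}$ such that $K\cap\{q_{1},q_{2}\}$ is a singleton. Set $K':=K\cap Q'\in\mathscr{K}'$. Because $\{q_{1},q_{2}\}\subseteq Q'$, we have
\[
K'\cap\{q_{1},q_{2}\}\;=\;K\cap Q'\cap\{q_{1},q_{2}\}\;=\;K\cap\{q_{1},q_{2}\},
\]
which is still a singleton, so $(Q',\mathscr{K}')$ is $T_{0}$ and hence discriminative by Lemma~\ref{1-1}(1). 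The bi-discriminative case is proved in exactly the same fashion: apply the $T_{1}$-property of $(Q,\mathscr{K})$ twice to obtain $K,L\in\mathscr{K}$ with $K\cap\{q_{1},q_{2}\}=\{q_{1}\}$ and $L\cap\{q_{1},q_{2}\}=\{q_{2}\}$, then take the traces $K\cap Q'$ and $L\cap Q'$ in $\mathscr{K}'$; the intersections with $\{q_{1},q_{2}\}$ are unchanged because both items already lie in $Q'$, and Lemma~\ref{1-1}(2) concludes.

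I do not anticipate any real obstacle here: the whole argument reduces to the one-line identity $(K\cap Q')\cap\{q_{1},q_{2}\}=K\cap\{q_{1},q_{2}\}$, valid as soon as $\{q_{1},q_{2}\}\subseteq Q'$. The only point worth flagging is that the separating witnesses $K,L$ supplied by the hypothesis need not belong to $\mathscr{K}'$ themselves; they enter the substructure via their traces. If one prefers to avoid Lemma~\ref{1-1} and argue directly from Definition~\ref{ee}, the very same map $K\mapsto K\cap Q'$ shows that an equality $\mathscr{K}'_{q_{1}}=\mathscr{K}'_{q_{2}}$ (resp.\ an inclusion $\mathscr{K}'_{q_{1}}\subseteq\mathscr{K}'_{q_{2}}$) would force $\mathscr{K}_{q_{1}}=\mathscr{K}_{q_{2}}$ (resp.\ $\mathscr{K}_{q_{1}}\subseteq\mathscr{K}_{q_{2}}$), contradicting the discriminativeness (resp.\ bi-discriminativeness) of the parent structure.
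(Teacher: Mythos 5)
Your argument is correct: the one-line trace identity $(K\cap Q')\cap\{q_{1},q_{2}\}=K\cap\{q_{1},q_{2}\}$ for $\{q_{1},q_{2}\}\subseteq Q'$ does exactly what is needed, and both your main route through the $T_{0}$/$T_{1}$ characterization of Lemma~\ref{1-1} and the alternative direct argument from Definition~\ref{ee} are sound. The paper states Proposition~\ref{dd} without proof (it is imported from~\cite{ref4}), but your reasoning is precisely the subspace-inheritance argument the paper alludes to when it compares substructures of knowledge structures with subspaces of $T_{0}$ and $T_{1}$ topological spaces, so there is nothing to add.
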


For an arbitrary non-empty set~$S$ of skills, let~$\mathscr{F}(S)$ be the set as follows: $$\mathscr{F}(S)=\{T|T:S\rightarrow [0,1]\}.$$ For each $T\in\mathscr{F}(S)$, we denote~$T$ by~$T=\{\frac{T(s)}{s}: s\in S\}\in \mathscr{F}(S)$, or simply denoted by $T=\{\frac{T(s)}{s}\}$ if the basic set~$S$ is clear from the context. Moreover, we omit $\frac{T(s)}{s}$ if $T(s)=0$ for any $s\in S$. For each~$T\in \mathscr{F}(S)$ and $s\in S$,~$T(s)$ is represented the degree of membership of~$s$ with respect to~$T$. Some operations on~$\mathscr{F}(S)$ are defined as follows~\cite{ref6}:
\begin{equation*}
\begin{aligned}
&T_{1}=T_{2} \Leftrightarrow T_{1}(s)=T_{2}(s) ,\,\, \forall s\in S;\\
&T_{1}\subseteq T_{2} \Leftrightarrow T_{1}(s)\leq T_{2}(s) ,\,\, \forall s\in S;\\
&(T_{1}\cup T_{2})(s)=T_{1}(s)\vee T_{2}(s),\,\, \forall s\in S;\\
&(T_{1}\cap T_{2})(s)=T_{1}(s)\wedge T_{2}(s),\,\, \forall s\in S.
\end{aligned}
\end{equation*}

\section{Main results}
\subsection{The knowledge structures delineated by fuzzy skill multimaps}

It is well known that different levels of proficiency in skills need to solve different items. Therefore, we can build the relationship between items and fuzzy skills by a fuzzy skill multimap.

\begin{defn}[\cite{ref5}]
{\rm A {\it fuzzy skill multimap} is a triple~$(Q, S, \mu)$, where~$Q=\{q_{i}\}_{i\in I}$ is a non-empty set of items, $S=\{s_{j}\}_{j\in J}$ is a non-empty set of skills and~$\mu$ is a mapping from~$Q$ to~$2^{\mathscr{F}(S)\backslash \emptyset}\backslash \emptyset$ such that $\mu(q)$ is a non-empty subset of $\mathscr{F}(S)$ for each~$q\in Q$,.}
\end{defn}

Each element of~$\mu(q)$ is called {\it competencies} for each~$q\in Q$. Moreover, if the elements of each $\mu(q)$ are pairwise incomparable, then~$(Q, S, \mu)$ is said to be a {\it fuzzy skill function}.

\begin{defn}\label{aaaa}{\rm
Assume~$(Q, S, \mu)$ is a fuzzy skill multimap. We say that a mapping~$p: \mathscr{F}(S)\rightarrow 2^{Q}$ is called the {\it problem function induced by~$(Q, S, \mu)$} if $p$ is defined by
$$p(F)=\{q\in Q|\ \mbox{there eixsts}~C_{q}\in \mu(q)\ \mbox{such that}\ ~C_{q}\subseteq F\}$$
for each ~$F\in\mathscr{F}(S)$.}
\end{defn}

\begin{rem}
{\rm For the problem function $p$ induced by~$(Q, S, \mu)$, put~ $\mathscr{K}=\{p(F)|F\in \mathscr{F}(S)\}$. Clearly, the state~$Q$ is delineated by~$T=\{\frac{1}{s}: s\in S\}$ and the state~$\emptyset$ is delineated by $T=\{\frac{0}{s}: s\in S\}$. Therefore, $(Q, \mathscr{K})$ is a knowledge structure, and we say that $(Q, \mathscr{K})$ is the {\it delineated knowledge structure} by a fuzzy skill multimap $(Q, S, \mu)$. This model is called {\it the competency model}. It is easily checked that the knowledge structure delineated by a fuzzy skill multimap is not necessary a simple closure space or knowledge space. Now, we give characterizations of fuzzy skill multimaps such that the delineated knowledge structures are knowledge space, simple closure space, learning space, discriminative knowledge structure and bi-discriminative knowledge structure respectively.}
\end{rem}

We provide a basic concept of {\it molecule} as follows for characterizations of knowledge space and learning space.

\begin{defn}
{\rm Let~$(Q, S, \mu)$ be a fuzzy skill multimap such that~$\mu(q)$ is finite for each~$q\in Q$. The element $T\in \mathscr{F}(S)$ is called a {\it molecule} if there is~$s_{0}\in S$ such that~$T(s_{0})\neq 0$, and for each $s\in S\backslash \{s_{0}\}$,~$T(s)=0$. For convenience, we denote the molecule~$T$ by~$\frac{T(s_{0})}{s_{0}}$.}
\end{defn}

\begin{Notation}
Suppose~$(Q, S, \mu)$ is a fuzzy skill multimap. Throughout this paper, for any~$q\in Q$, we denote the molecule~$T^{C}_{q}$ by~$\frac{T^{C}_{q}(s)}{s}$ if there is~$C\in \mu(q)$ such that molecule~$T^{C}_{q}(s)=C(s)>0$. Moreover, for every $q\in Q$, we denote $\mu_{M}(q)$ by the set of minimum elements in $\mu(q)$.
\end{Notation}

\begin{rem}~\label{bbbb}
{\rm A fuzzy skill multimap~$(Q, S, \mu)$ is {\it disjunctive} if for each~$q\in Q$ the competencies~$C\in \mu(q)$ are molecules, and $(Q, S, \mu)$ is {\it conjunctive} if, for each~$q\in Q$, $\mu(q)=\{C\}$ for some~$C\in \mathscr{F}(S)$. Clearly, each conjunctive fuzzy skill multimap is a fuzzy skill function, but not vice versa.}
\end{rem}

By similar proofs of \cite[Theorem~36 and Corollay 7]{LCL}, following theorem and corollary hold.

\begin{thm}\label{ttttt2}
Assume $(Q, S, \mu)$ is a fuzzy skill multimap, where each $\mu(q)$ is a finite set. Then the delineate knowledge structure $(Q, \mathscr{K})$ is a knowledge space iff, for any $K\subseteq Q$, $K\in\mathscr{K}$ iff there exists $\mathscr{P}_{K}\subseteq\bigcup_{q\in Q}\mu_{M}(q)$ with $K=\bigcup_{D\in\mathscr{P}_{K}}p(D)$.
\end{thm}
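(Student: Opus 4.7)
The plan is to prove the outer biconditional by establishing both directions directly, using the finiteness of each $\mu(q)$ to descend to minimal competencies and then the definition of the problem function $p$ to convert set-theoretic operations on $\mathscr{F}(S)$ into operations on states.

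For the direction $(\Rightarrow)$, assume $\mathscr{K}$ is a knowledge space. The ``if'' part of the inner equivalence is immediate: if $K=\bigcup_{D\in\mathscr{P}_{K}}p(D)$ with $\mathscr{P}_{K}\subseteq\bigcup_{q\in Q}\mu_{M}(q)$, then each $p(D)$ belongs to $\mathscr{K}$ since $D\in\mathscr{F}(S)$, and closure of $\mathscr{K}$ under union delivers $K\in\mathscr{K}$. For the ``only if'' part, fix $K\in\mathscr{K}$ and write $K=p(F)$ for some $F\in\mathscr{F}(S)$. For each $q\in K$, choose $C_{q}\in\mu(q)$ with $C_{q}\subseteq F$; by the finiteness of $\mu(q)$ one can pick a minimal element $D_{q}\in\mu_{M}(q)$ with $D_{q}\subseteq C_{q}$. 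Then $q\in p(D_{q})$ because $D_{q}\in\mu(q)$ and $D_{q}\subseteq D_{q}$, while $p(D_{q})\subseteq p(F)=K$ because any $r\in p(D_{q})$ admits some $C_{r}\in\mu(r)$ with $C_{r}\subseteq D_{q}\subseteq C_{q}\subseteq F$. Hence $K=\bigcup_{q\in K}p(D_{q})$ with $\{D_{q}\}_{q\in K}\subseteq\bigcup_{q\in Q}\mu_{M}(q)$, which is the desired representation.

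For the direction $(\Leftarrow)$, suppose the internal characterization of states holds. Given any subfamily $\{K_{i}\}_{i\in I}\subseteq\mathscr{K}$, pick for each $i$ a family $\mathscr{P}_{i}\subseteq\bigcup_{q\in Q}\mu_{M}(q)$ with $K_{i}=\bigcup_{D\in\mathscr{P}_{i}}p(D)$. Setting $\mathscr{P}=\bigcup_{i\in I}\mathscr{P}_{i}\subseteq\bigcup_{q\in Q}\mu_{M}(q)$, one has $\bigcup_{i\in I}K_{i}=\bigcup_{D\in\mathscr{P}}p(D)$, which lies in $\mathscr{K}$ by the same characterization. Therefore $\mathscr{K}$ is closed under union, and is a knowledge space.

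The only genuinely non-formal step is the descent from an arbitrary competency $C_{q}$ to a minimal one $D_{q}\in\mu_{M}(q)$ with $D_{q}\subseteq C_{q}$; this is where the finiteness hypothesis on $\mu(q)$ is used, ensuring that any $\subseteq$-descending chain in $\mu(q)$ terminates. Everything else reduces to bookkeeping with Definition~\ref{aaaa}, so the argument closely parallels the proof of \cite[Theorem~36]{LCL}.
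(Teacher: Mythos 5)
Your proof is correct. The paper itself omits a proof of this theorem, merely citing the analogous result for skill multimaps in \cite[Theorem~36]{LCL}, and your argument --- decomposing $K=p(F)$ via minimal competencies $D_{q}\in\mu_{M}(q)$ chosen below each witnessing $C_{q}$ (using finiteness of $\mu(q)$), monotonicity of $p$, and re-assembling unions of the $p(D)$ --- is exactly the standard adaptation of that argument to the fuzzy setting, so it matches the intended proof in approach and fills the gap the paper leaves to the reader.
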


\begin{cor}\label{2021}
Let $(Q, S, \mu)$ be a fuzzy skill multimap, where each $\mu(q)$ is a finite set. If, for any subset $Q^{\prime}\subseteq Q$ and $g\in Q$, the following condition ($\star$) holds, then $(Q, \mathscr{K})$ is a knowledge space.

\smallskip
($\star$) For each subfamily $\mathscr{P}$ of $\bigcup_{q\in Q^{\prime}}\mu_{M}(q)$, if $C\nsubseteq D$ for any $C\in \mu_{M}(g)$ and $D\in \mathscr{P}$, then  $C\nsubseteq \bigcup\mathscr{P}$ for any $C\in \mu_{M}(g)$.
\end{cor}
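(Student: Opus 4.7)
The plan is to apply Theorem~\ref{ttttt2}: to conclude that $(Q,\mathscr{K})$ is a knowledge space, it suffices to verify that for every subset $K\subseteq Q$, $K\in\mathscr{K}$ if and only if there exists $\mathscr{P}_K\subseteq\bigcup_{q\in Q}\mu_M(q)$ with $K=\bigcup_{D\in\mathscr{P}_K}p(D)$. The necessity direction does not require $(\star)$: if $K=p(F)$ for some $F\in\mathscr{F}(S)$, then for each $q\in K$ one picks $C_q\in\mu(q)$ with $C_q\subseteq F$ and, using the finiteness of $\mu(q)$, refines it to some $D_q\in\mu_M(q)$ with $D_q\subseteq C_q\subseteq F$. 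Setting $\mathscr{P}_K=\{D_q:q\in K\}$, the inclusion $K\subseteq\bigcup_{D\in\mathscr{P}_K}p(D)$ follows because $q\in p(D_q)$, while the reverse inclusion follows because each $D_q\subseteq F$ transports any witness in $p(D_q)$ back to a witness in $p(F)=K$.

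The sufficiency direction is where condition~$(\star)$ enters. Given such a $\mathscr{P}_K$, set $F=\bigcup\mathscr{P}_K$; monotonicity of $p$ immediately yields $\bigcup_{D\in\mathscr{P}_K}p(D)\subseteq p(F)$, so it is enough to establish the reverse inclusion $p(F)\subseteq\bigcup_{D\in\mathscr{P}_K}p(D)$. Fix $g\in p(F)$. Then some $C\in\mu(g)$ satisfies $C\subseteq F$, and by finiteness of $\mu(g)$ we can take $C''\in\mu_M(g)$ with $C''\subseteq C\subseteq\bigcup\mathscr{P}_K$. Now invoke the contrapositive of $(\star)$ with $Q'=Q$ and $\mathscr{P}=\mathscr{P}_K$: since some member of $\mu_M(g)$ is contained in $\bigcup\mathscr{P}_K$, there must exist $C^{\ast}\in\mu_M(g)$ and $D^{\ast}\in\mathscr{P}_K$ with $C^{\ast}\subseteq D^{\ast}$, whence $g\in p(D^{\ast})\subseteq\bigcup_{D\in\mathscr{P}_K}p(D)$. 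This gives $K=p(F)\in\mathscr{K}$, completing the equivalence demanded by Theorem~\ref{ttttt2}.

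The main obstacle, and the precise role of $(\star)$, is blocking the pathology that $C\subseteq\bigcup\mathscr{P}_K$ could occur without $C$ being contained in any single $D\in\mathscr{P}_K$; this is exactly what would allow $p(F)$ to strictly contain $\bigcup_{D\in\mathscr{P}_K}p(D)$ and prevent Theorem~\ref{ttttt2} from applying. Apart from this, the argument is a bookkeeping exercise, and finiteness of each $\mu(q)$ is used only to secure the existence of minimal refinements such as $D_q$ and $C''$.
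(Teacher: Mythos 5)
Your argument is correct: the paper gives no explicit proof of Corollary~\ref{2021} (it only cites the analogous results in \cite{LCL}), and your derivation --- verifying both directions of the inner biconditional of Theorem~\ref{ttttt2}, with the necessity direction using only finiteness of each $\mu(q)$ to pass to minimal competencies and the sufficiency direction using the contrapositive of $(\star)$ with $Q'=Q$ to show $p\bigl(\bigcup\mathscr{P}_K\bigr)=\bigcup_{D\in\mathscr{P}_K}p(D)$ --- is exactly the intended route. Your closing remark correctly isolates the role of $(\star)$ as ruling out a minimal competency being covered by $\bigcup\mathscr{P}_K$ without being contained in any single member of $\mathscr{P}_K$.
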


\begin{thm}\label{1-2}
Let~$(Q, S, \mu)$ be a fuzzy skill multimap. If for each~$q\in Q$ and~$C\in \mu(q)$, there exists a molecule~$\frac{T^{C}_{q}(s)}{s}\subseteq C$ such that~$\frac{T^{C}_{q}(s)}{s}\in \mu(q)$, then the delineated knowledge structure~$(Q,\mathscr{K})$ is a knowledge space.
\end{thm}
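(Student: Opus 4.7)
The plan is to verify directly that $\mathscr{K}$ is closed under pairwise union (whence under arbitrary finite union by induction). Every element of $\mathscr{K}$ has the form $p(F)$ for some $F\in\mathscr{F}(S)$, so given $K_1,K_2\in\mathscr{K}$ with $K_i=p(F_i)$, I will show that
\[
K_1\cup K_2 = p(F_1\cup F_2),
\]
where $F_1\cup F_2$ is the pointwise maximum defined in Section~2. This will exhibit $K_1\cup K_2$ as a state, completing the proof.

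The inclusion $K_1\cup K_2\subseteq p(F_1\cup F_2)$ is immediate from monotonicity of $p$: if $q\in K_i$, there is a competency $C\in\mu(q)$ with $C\subseteq F_i\subseteq F_1\cup F_2$, so by Definition~\ref{aaaa} we have $q\in p(F_1\cup F_2)$. This direction does not use the hypothesis at all.

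The reverse inclusion is where the molecule hypothesis enters. Given $q\in p(F_1\cup F_2)$, fix a witnessing competency $C\in\mu(q)$ with $C\subseteq F_1\cup F_2$ and apply the hypothesis to obtain a molecule $M = \frac{T^{C}_{q}(s_0)}{s_0}\in\mu(q)$ with $M\subseteq C\subseteq F_1\cup F_2$. Because $M$ has support concentrated at the single skill $s_0$, the containment $M\subseteq F_1\cup F_2$ collapses to the single real inequality $M(s_0)\leq F_1(s_0)\vee F_2(s_0)$. The maximum of two real numbers is attained by at least one of them, so $M(s_0)\leq F_i(s_0)$ for some $i\in\{1,2\}$; at every other skill $M$ vanishes, so $M\subseteq F_i$. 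Since $M\in\mu(q)$, this witnesses $q\in p(F_i)=K_i\subseteq K_1\cup K_2$.

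The only real obstacle is the separation step in the last paragraph: a molecule lying inside $F_1\cup F_2$ must lie inside one of the two factors. This is special to singleton-supported fuzzy sets and fails for general competencies, which is exactly why the hypothesis of the theorem is needed—it supplies, for every competency actually used to delineate a state, a molecular refinement in $\mu(q)$ to which the separation argument applies. Once this observation is in place, the rest of the argument is essentially bookkeeping using the pointwise definitions of $\cup$ on $\mathscr{F}(S)$ and the definition of the problem function.
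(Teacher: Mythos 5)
Your central idea is exactly the one in the paper's proof of Theorem~\ref{1-2}: witness $q\in p(F_1\cup F_2)$ by a competency $C\subseteq F_1\cup F_2$, replace $C$ by the molecular refinement $M\in\mu(q)$ that the hypothesis supplies, and observe that a singleton-supported fuzzy set sitting below a pointwise maximum must sit below one of the factors, so that $q$ already lies in some $p(F_i)$. The two inclusions are organized the same way, and the molecule hypothesis is used at the same point and for the same reason.

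The substantive difference is scope, and it is a genuine gap relative to the statement. You prove closure under pairwise (hence finite) unions, but a knowledge space in this framework is closed under the union of an \emph{arbitrary} subfamily of states; the paper accordingly takes an arbitrary family $\{T_i\}_{i\in I}$ and claims $p\bigl(\bigcup_{i\in I}T_i\bigr)=\bigcup_{i\in I}p(T_i)$. When $Q$ is finite, $\mathscr{K}$ is finite and your finite-union closure suffices, but Theorem~\ref{1-2} carries no finiteness hypothesis, so as written your argument leaves the infinite case untreated. It is worth noting that your own remark, ``the maximum of two real numbers is attained by at least one of them,'' is precisely what fails in the infinite case: if $M(s_0)\leq\sup_{i\in I}F_i(s_0)$ and the supremum is not attained, no single $F_i$ need contain $M$. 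The paper's proof asserts the existence of such an $i$ without justification, and in fact the conclusion can fail: over a single skill $s$, take $\mu(q_n)=\{\{\frac{1-2^{-n}}{s}\}\}$ for $n\in\mathbb{N}$ and $\mu(q_\infty)=\{\{\frac{1}{s}\}\}$; every competency is a molecule, yet the states are exactly the finite initial segments $\{q_1,\ldots,q_N\}$ together with $Q$, so the union $\{q_n: n\in\mathbb{N}\}=Q\setminus\{q_\infty\}$ of states is not a state. So your proof is correct and complete exactly for the finite-union part of the claim; to match the theorem as stated you would need either an added finiteness assumption or some way around the attainment issue that the paper's proof silently skips.
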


\begin{proof}
Let $(Q, \mathscr{K})$ be delineated by a fuzzy skill multimap~$(Q, S, \mu)$, and let~$p$ be the problem function induced by~$(Q, S, \mu)$. Take any family $\{T_{i}:i\in I\}\subset 2^{\mathscr{F}(S)\backslash \emptyset}$; then we conclude that~$\bigcup_{i\in I}p(T_{i})=p(\bigcup_{i\in I}T_{i})$. In fact, it clear that~$\bigcup_{i\in I}p(T_{i})\subseteq p(\bigcup_{i\in I}T_{i})$. It need to show that~$p(\bigcup_{i\in I}T_{i})\subseteq \bigcup_{i\in I}p(T_{i})$. Take any~$q\in p(\bigcup_{i\in I}T_{i})$. Then there exists~$C\in \mu(q)$ such~ that $C\subset \bigcup_{i\in I}T_{i}$. If there exists $T_{i}$ such that $C\subseteq T_{i}$, then it is obvious that $q\in p(T_{i})\subseteq \bigcup_{i\in I}p(T_{i})$. Consequently,~$\bigcup_{i\in I}p(T_{i})=p(\bigcup_{i\in I}T_{i})$. Suppose not, for any~$i\in I$ we have~$C\not\subseteq T_{i}$. However, there exists a molecule~$\frac{T^{C}_{q}(s_{C})}{s_{C}}$ such that~$\frac{T^{C}_{q}(s_{C})}{s_{C}}\subseteq C$ and~$\frac{T^{C}_{q}(s_{C})}{s_{C}}\in\mu(q)$. Since~$C\subset\bigcup_{i\in I}T_{i}$, there exists~$i\in I$ such that ~$\frac{T^{C}_{q}(s_{C})}{s_{C}}\subseteq T_{i}$, hence~$q\in p(T_{i})\subseteq \bigcup_{i\in I}P(T_{i})$.
\end{proof}

\begin{cor}
Assume~$(Q, S, \mu)$ is a skill multimap. If, for each~$q\in Q$ and~$C\in \mu(q)$, there exists~ $s_{C}\in C$ such that~$\{s_{C}\}\in \mu(q)$, then the delineated knowledge structure~$(Q, \mathscr{K})$ is a knowledge space.
\end{cor}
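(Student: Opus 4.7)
The plan is to derive this corollary from Theorem~\ref{1-2} by viewing the (crisp) skill multimap $(Q, S, \mu)$ as a fuzzy skill multimap under the standard crisp-to-fuzzy embedding: each competency $C \in \mu(q) \subseteq 2^{S}$ is identified with its characteristic function $\chi_{C} \in \mathscr{F}(S)$, where $\chi_{C}(s) = 1$ if $s \in C$ and $\chi_{C}(s) = 0$ otherwise. Under this identification, the singleton $\{s_{C}\}$ corresponds to the molecule $\frac{1}{s_{C}}$, which is exactly the kind of object whose existence Theorem~\ref{1-2} requires.

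First, I would verify that the delineated knowledge structure is the same whether $\mu$ is treated as crisp or fuzzy. For any $F \in \mathscr{F}(S)$, the pointwise condition $\chi_{C} \subseteq F$ is equivalent to $C \subseteq \{s \in S : F(s) = 1\}$, so the value of the fuzzy problem function at $F$ coincides with the crisp problem function evaluated at the $1$-level set of $F$. Consequently the crisp and fuzzy problem functions induce the same family $\mathscr{K}$ of subsets of $Q$. Next, I would translate the hypothesis: fixing $q \in Q$ and $C \in \mu(q)$, by assumption there exists $s_{C} \in C$ with $\{s_{C}\} \in \mu(q)$; under the embedding this yields a molecule $\frac{1}{s_{C}} \in \mu(q)$, and the inclusion $\frac{1}{s_{C}} \subseteq \chi_{C}$ holds pointwise because $\chi_{C}(s_{C}) = 1$ while $\frac{1}{s_{C}}$ vanishes off $s_{C}$. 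This is precisely the hypothesis of Theorem~\ref{1-2}.

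Invoking Theorem~\ref{1-2} on the associated fuzzy skill multimap then immediately yields that $(Q, \mathscr{K})$ is closed under arbitrary unions, hence a knowledge space. I do not expect any serious obstacle here; the only care needed is in confirming that the crisp containment $\{s_{C}\} \subseteq C$ carries over to the fuzzy containment $\frac{1}{s_{C}} \subseteq \chi_{C}$ and that the delineated family is preserved under the crisp-to-fuzzy passage, both of which reduce to routine pointwise verifications.
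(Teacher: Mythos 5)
Your proposal is correct and matches the paper's intent exactly: the paper states this corollary without proof as the crisp specialization of Theorem~\ref{1-2}, obtained by identifying each competency $C$ with its characteristic function so that $\{s_{C}\}$ becomes the molecule $\frac{1}{s_{C}}\subseteq\chi_{C}$. Your pointwise verifications that the delineated structure is preserved under this embedding are the routine details the paper omits.
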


Theorems~\ref{ttttt2} and \ref{1-2} give partial answers to Question~\ref{1-5}. However, the reverse of Theorem~\ref{1-2} does not hold, see the following example.

\begin{ex}
{Let~$Q=\{q_{1}, q_{2}\}$,~$S$ be a non-empty skill set, and a fuzzy skill multimap be defined by
\begin{equation*}
\begin{aligned}
&\mu(q_{1})=\{S\}, &\mu(q_{2})=\{S\}.\\
\end{aligned}
\end{equation*}
Then the delineated knowledge structure
$$\mathscr{K}=\{\emptyset, Q\}$$
is a knowledge space. However, there is~$q_{1}\in Q$ such that, for any~$C\in \mu(q_{1})$, the competence~$C$ is not a molecule.}
\end{ex}

\begin{thm}~\label{aa}
Assume~$(Q, S, \mu)$ is a fuzzy skill multimap such that each $\mu(q)$ is finite. If for each~$q\in Q$ and any $C_{1}, C_{2}\in \mu(q)$, there exists~$C_{3}\in \mu(q)$ such that~$C_{3}\subseteq C_{1}\cap C_{2}$, then the delineated knowledge structure~$(Q, \mathscr{K})$ is a simple closure space.
\end{thm}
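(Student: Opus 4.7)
The plan is to mirror the strategy used in the proof of Theorem~\ref{1-2}, dualized from union to intersection. Writing $p$ for the problem function induced by $(Q,S,\mu)$, I shall establish the identity $p(F_1\cap F_2)=p(F_1)\cap p(F_2)$ for every pair $F_1,F_2\in\mathscr{F}(S)$. Granted this, if $K_1,K_2\in\mathscr{K}$ with $K_i=p(F_i)$, then $K_1\cap K_2=p(F_1\cap F_2)\in\mathscr{K}$, so $\mathscr{K}$ is closed under finite intersection and hence $(Q,\mathscr{K})$ is a simple closure space.

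One inclusion, $p(F_1\cap F_2)\subseteq p(F_1)\cap p(F_2)$, is immediate from Definition~\ref{aaaa}: any competency $C_q\in\mu(q)$ contained in $F_1\cap F_2$ is automatically contained in both $F_1$ and $F_2$. The content lies in the reverse inclusion, and this is precisely where the hypothesis is used. Given $q\in p(F_1)\cap p(F_2)$, I would pick $C_1,C_2\in\mu(q)$ with $C_i\subseteq F_i$ for $i=1,2$. By hypothesis there exists $C_3\in\mu(q)$ with $C_3\subseteq C_1\cap C_2$; since the operation $\cap$ on $\mathscr{F}(S)$ is the pointwise minimum and is therefore order-preserving, $C_3\subseteq F_1\cap F_2$, so $q\in p(F_1\cap F_2)$.

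If one wants closure under arbitrary intersection, the argument extends by exploiting the finiteness of each $\mu(q)$: given a family $\{F_i\}_{i\in I}$ and $q\in\bigcap_{i\in I}p(F_i)$, only finitely many distinct witnessing competencies $D_1,\dots,D_n\in\mu(q)$ occur among the choices $C_i^q\subseteq F_i$, and a finite induction on the hypothesis produces some $C^\star\in\mu(q)$ with $C^\star\subseteq D_1\cap\cdots\cap D_n$; since every $F_i$ dominates at least one $D_j$, one obtains $C^\star\subseteq F_i$ for each $i$, hence $q\in p(\bigcap_{i\in I}F_i)$. I do not anticipate any genuine obstacle: the hypothesis is tailored to encode exactly the downward meet-closure of each $\mu(q)$ that is needed to commute $p$ past $\cap$. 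The only step meriting care is recognising that intersection on fuzzy sets, per the operations recalled in Section~2, is read pointwise, so that $F_1\cap F_2$ is a legitimate element of $\mathscr{F}(S)$ to which $p$ can be applied.
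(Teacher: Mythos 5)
Your proposal is correct and follows essentially the same route as the paper's proof: both establish the identity $p\bigl(\bigcap_{i\in I}T_{i}\bigr)=\bigcap_{i\in I}p(T_{i})$ by using the finiteness of $\mu(q)$ to reduce to finitely many witnessing competencies and then iterating the pairwise hypothesis to obtain a single $C\in\mu(q)$ below their intersection. The only difference is presentational --- you treat pairs first and then extend to arbitrary families, while the paper argues for an arbitrary family at once --- so there is nothing substantive to add.
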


\begin{proof}
Take any subfamily~$\{T_{i}: i\in I\}\subseteq 2^{\mathscr{F}(S)\backslash \emptyset}$. We claim that~$\bigcap_{i\in I}p(T_{i})=p(\bigcap_{i\in I}T_{i})$. In fact, it clear that~$p(\bigcap_{i\in I}T_{i})\subseteq \bigcap_{i\in I}p(T_{i})$. Hence it suffices to prove that~$\bigcap_{i\in I}p(T_{i})\subseteq p(\bigcap_{i\in I}T_{i})$. Indeed, pick any~$q\in \bigcap_{i\in I}p(T_{i})$. Then~$q\in p(T_{i})$, hence there exists~$C_{i}\in \mu(q)$ such that~$C_{i}\subseteq T_{i}$. Since~$\mu(q)$ is finite, there exists~$C\in \mu(q)$ such that~$C\subseteq \bigcap_{i\in I}C_{i}$ by our assumption. Then~$C\subseteq \bigcap_{i\in I}C_{i}\subseteq \bigcap_{i\in I}T_{i}$. Hence $p\in p(\bigcap_{i\in I}T_{i})$, then $\bigcap_{i\in I}p(T_{i})\subseteq p(\bigcap_{i\in I}T_{i})$. Consequently, the delineated knowledge structure~$(Q, \mathscr{K})$ is a simple closure space.
\end{proof}

\begin{cor}
Assume $(Q, S, \mu)$ is a skill multimap such that each $\mu(q)$ is finite. If for each~$q\in Q$ and~$C_{1}, C_{2}\in \mu(q)$, 
there exists~$C_{3}\in \mu(q)$ with $C_{3}\subseteq C_{1}\cap C_{2}$, then the delineated knowledge structure~$(Q, \mathscr{K})$ 
is a simple closure space.
\end{cor}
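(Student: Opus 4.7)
The final statement (the corollary immediately following Theorem~\ref{aa}) is a special case of Theorem~\ref{aa}, specialized from fuzzy skill multimaps to ordinary (crisp) skill multimaps. The plan, therefore, is to reduce to Theorem~\ref{aa} by identifying a crisp skill multimap with a fuzzy skill multimap whose competencies take values only in $\{0,1\}$.

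First, I would explicitly set up this identification: given the skill multimap $(Q,S,\mu)$, for each $q\in Q$ and each competency $C\in\mu(q)$ (a subset of $S$) define $\widetilde{C}\in\mathscr{F}(S)$ by $\widetilde{C}(s)=1$ if $s\in C$ and $\widetilde{C}(s)=0$ otherwise, and let $\widetilde{\mu}(q)=\{\widetilde{C}:C\in\mu(q)\}$. Then $(Q,S,\widetilde{\mu})$ is a fuzzy skill multimap and $\widetilde{\mu}(q)$ is finite since $\mu(q)$ is. The crucial observation is that the inclusion $C_3\subseteq C_1\cap C_2$ of ordinary subsets corresponds exactly to the fuzzy inclusion $\widetilde{C_3}\subseteq \widetilde{C_1}\cap \widetilde{C_2}$ under the operations defined on $\mathscr{F}(S)$, because $\wedge$ on $\{0,1\}$-valued functions coincides with set intersection.

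Next, I would check that the knowledge structure delineated by $(Q,S,\widetilde{\mu})$ agrees with the one delineated by the crisp skill multimap $(Q,S,\mu)$. Since every competency $\widetilde{C}$ is a $\{0,1\}$-valued function, and since for any $F\in\mathscr{F}(S)$ the condition $\widetilde{C}\subseteq F$ depends only on the values of $F$ on $C$ (requiring $F(s)=1$ there), one can restrict attention to $\{0,1\}$-valued $F$, i.e., to crisp subsets of $S$; this yields the same family $\mathscr{K}$ as obtained from the crisp problem function. Hence $(Q,\mathscr{K})$ coincides with the delineated structure in both interpretations.

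Finally, since the hypothesis of the corollary translates into the hypothesis of Theorem~\ref{aa} for $(Q,S,\widetilde{\mu})$, we conclude from Theorem~\ref{aa} that $(Q,\mathscr{K})$ is a simple closure space. The only subtle point — and the one I would be most careful about — is verifying that the fuzzy and crisp problem functions genuinely produce the same family $\mathscr{K}$; once this identification is in hand, the corollary is immediate, with no additional calculation required.
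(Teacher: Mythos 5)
Your proposal is correct and matches the paper's (implicit) argument: the paper states this corollary without proof as the specialization of Theorem~\ref{aa} to crisp skill multimaps, which is exactly the reduction you carry out via characteristic functions. Your careful verification that the crisp and fuzzy problem functions delineate the same family $\mathscr{K}$ is the right point to check and is sound.
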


Theorem~\ref{aa} gives a partial answer to Question~\ref{1-5}. Nevertheless, the reverse of Theorem~\ref{aa} is not true, see the following example.

\begin{ex}
{Let~$Q=\{q_{1}, q_{2}, q_{3}\}$ and~$S=\{s_{1}, s_{2}\}$. And a fuzzy skill multimap be defined by
\begin{equation*}
\begin{aligned}
&\mu(q_{1})=\{\{\frac{0.2}{s_{1}}\}, \{\frac{0.1}{s_{1}}, \frac{0.3}{s_{2}}\}\},
&\mu(q_{2})=\{\{\frac{0.6}{s_{1}}, \frac{0.7}{s_{2}}\}\}, \
&\mu(q_{3})=\{\{\frac{0.4}{s_{1}}\}\}.
\end{aligned}
\end{equation*}
Then the delineated knowledge structure
$$\mathscr{K}=\{\emptyset, \{q_{1}\}, \{q_{1}, q_{3}\}, Q\}$$
is a simple closure space. However, the intersection of the competence~$\{\frac{0.2}{s_{1}}\}$ and the competence~$ \{\frac{0.1}{s_{1}}, \frac{0.3}{s_{2}}\}$ is not in~$\mu(q_{1})$.}
\end{ex}

From Theorems~\ref{1-2} and \ref{aa}, the following corollary is immediate.

\begin{cor}
Assume $(Q, S, \mu)$ is a fuzzy skill multimap such that each $\mu(q)$ is finite. If for each~$q\in Q$, there exists a molecule~$\frac{T_{q}(s)}{s}\in \mu(q)$ such that~$\frac{T_{q}(s)}{s}\in \mu(q)$ is a minimum element in $\mu(q)$, then the delineated knowledge structure~$(Q, \mathscr{K})$ is a quasi ordinal space.
\end{cor}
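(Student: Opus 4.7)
The plan is to unpack what the hypothesis delivers and then verify that both Theorem~\ref{1-2} and Theorem~\ref{aa} apply, since a quasi ordinal space is by definition both a knowledge space and a simple closure space. The single key observation is that a minimum element of $\mu(q)$ is, by definition, contained in every element of $\mu(q)$. So the hypothesis hands us, for each $q\in Q$, a distinguished molecule $\frac{T_q(s)}{s}\in\mu(q)$ that sits below every competency in $\mu(q)$.

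First I would verify the hypothesis of Theorem~\ref{1-2}. Fix $q\in Q$ and $C\in\mu(q)$. Since $\frac{T_q(s)}{s}$ is minimum in $\mu(q)$, we have $\frac{T_q(s)}{s}\subseteq C$, and by hypothesis $\frac{T_q(s)}{s}\in\mu(q)$. Taking this molecule as the $\frac{T^C_q(s)}{s}$ required by Theorem~\ref{1-2} (it does not depend on $C$, which is allowed) gives us that $(Q,\mathscr{K})$ is a knowledge space.

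Next I would verify the hypothesis of Theorem~\ref{aa}. Fix $q\in Q$ and any two competencies $C_1,C_2\in\mu(q)$. Setting $C_3:=\frac{T_q(s)}{s}\in\mu(q)$, minimality yields $C_3\subseteq C_1$ and $C_3\subseteq C_2$, hence $C_3\subseteq C_1\cap C_2$. Since each $\mu(q)$ is finite, Theorem~\ref{aa} applies and $(Q,\mathscr{K})$ is a simple closure space.

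Combining the two conclusions, $(Q,\mathscr{K})$ is simultaneously closed under union and intersection, i.e., a quasi ordinal space. There is no real obstacle here: the only thing to check is that the same distinguished molecule does double duty, serving both as the witness required by Theorem~\ref{1-2} (a molecule submember of each $C$) and as the witness required by Theorem~\ref{aa} (a common lower bound for any two $C_1,C_2$), and minimality gives both at once. The finiteness of $\mu(q)$ is inherited directly from the hypothesis, which is needed for Theorem~\ref{aa}.
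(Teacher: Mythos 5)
Your proof is correct and follows exactly the route the paper intends: the corollary is stated as an immediate consequence of Theorems~\ref{1-2} and~\ref{aa}, and your verification that the minimum molecule witnesses both hypotheses is precisely the detail the paper leaves implicit.
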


\begin{cor}
Assume $(Q, S, \mu)$ is a skill multimap such that $\mu(q)$ is finite. If for each~$q\in Q$, there exists~$\{s\}\in \mu(q)$ such that~$\{s\}$ is a minimum element in $\mu(q)$, then the delineated knowledge structure~$(Q, \mathscr{K})$ is a quasi ordinal space.
\end{cor}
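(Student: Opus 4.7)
The plan is to derive this statement as an immediate consequence of the two preceding skill-multimap corollaries: the one following Theorem~\ref{1-2} (knowledge space) and the one following Theorem~\ref{aa} (simple closure space). The key observation is that the single hypothesis, ``$\{s\}\in\mu(q)$ and $\{s\}$ is minimum in $\mu(q)$,'' is strong enough to verify the premise of each of those corollaries simultaneously. Since a quasi ordinal space is by definition both a knowledge space and a simple closure space, establishing both halves gives the conclusion.

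First, I would handle the knowledge-space direction. Fix $q\in Q$ and $C\in\mu(q)$. Because $\{s\}$ is a minimum element of $\mu(q)$ under set inclusion, $\{s\}\subseteq C$, so in particular $s\in C$. Taking $s_C:=s$, we have $s_C\in C$ and $\{s_C\}=\{s\}\in\mu(q)$, which is exactly the hypothesis of the corollary that follows Theorem~\ref{1-2}. That corollary then yields that the delineated structure $(Q,\mathscr{K})$ is closed under union, hence a knowledge space.

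Next, I would handle the simple-closure-space direction. Given any $C_1,C_2\in\mu(q)$, minimality of $\{s\}$ forces $\{s\}\subseteq C_1$ and $\{s\}\subseteq C_2$, so $\{s\}\subseteq C_1\cap C_2$. Taking $C_3:=\{s\}\in\mu(q)$ verifies the hypothesis of the corollary following Theorem~\ref{aa}. That corollary then gives closure of $(Q,\mathscr{K})$ under intersection, hence it is a simple closure space. Combining the two halves, $(Q,\mathscr{K})$ is a quasi ordinal space.

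There is essentially no obstacle here beyond recognising that ``$\{s\}$ is a minimum element of $\mu(q)$'' is precisely the containment condition $\{s\}\subseteq C$ for every $C\in\mu(q)$, which is what lets the same witness serve both previous corollaries. The finiteness of $\mu(q)$ is inherited from the hypothesis and is needed only to invoke the simple-closure-space corollary. No further constructions or case analyses are required.
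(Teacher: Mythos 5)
Your proposal is correct and follows essentially the same route as the paper, which states the result as immediate from Theorem~\ref{1-2} and Theorem~\ref{aa}: you simply invoke their crisp-skill-multimap corollaries, using the minimum element $\{s\}$ (i.e.\ $\{s\}\subseteq C$ for every $C\in\mu(q)$) as the common witness for both hypotheses. Nothing further is needed.
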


Assume $(Q,S,\mu)$ is a fuzzy skill multimap. We always denote the following set

$$\{T: T\ \mbox{is a molecule}\ \mbox{and}\ T\in\bigcup_{q\in Q}\mu(q)\}$$ by $\mathscr{D}$ in this paper.
For any~$T\in \mathscr{D}$, put
$$[T]=\{q\in Q: \mbox{there exists}\ C\in \mu(q)\ \mbox{with}\ C\subseteq T\}.$$
For any subfamily~$\mathscr{D}'\subseteq \mathscr{D}$, denote
$$[\mathscr{D}']=\underset{F\in \mathscr{D}'}{\bigcup}[F].$$

\begin{thm}\label{ff}
Assume $(Q, S, \mu)$ is a fuzzy skill multimap such that each $\mu(q)$ is finite. If the following conditions~(1)-(3) hold, then the delineated knowledge structure~$(Q,\mathscr{K})$ is a learning space.

\smallskip
(1)~$Q$ is finite;

\smallskip
(2) For each~$q\in Q$ and~$C\in \mu(q)$, there exists a molecule~$\frac{T^{C}_{q}(s^{C}_{q})}{s^{C}_{q}}\in \mu(q)$;

\smallskip
(3) For each~$T\in \mathscr{D}$, if~$|[T]|\geq 2$, then there is~$\mathscr{D}'\subseteq \mathscr{D}$, such that~$|[T]\backslash [\mathscr{D}']|=1$ and~$[\mathscr{D}']\subseteq [T]$.
\end{thm}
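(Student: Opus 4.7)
The strategy is to combine Theorem \ref{1-2} with a multiset well-ordering argument driven by condition (3). First, note that condition (2), together with the Notation convention declaring $T^{C}_{q}$ to be a molecule with $T^{C}_{q}(s^{C}_{q}) = C(s^{C}_{q}) > 0$ (hence $T^{C}_{q} \subseteq C$), is precisely the hypothesis of Theorem \ref{1-2}; so $(Q, \mathscr{K})$ is already a knowledge space. Combined with the finiteness of $Q$ from (1), $\mathscr{K}$ is a finite knowledge space, and it therefore suffices to verify the accessibility property: every non-empty $K \in \mathscr{K}$ admits some $q \in K$ with $K \setminus \{q\} \in \mathscr{K}$, which for finite knowledge spaces is equivalent to well-gradedness. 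I therefore fix a non-empty $K \in \mathscr{K}$ and look for such a $q$.

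By Theorem \ref{ttttt2}, $K = \bigcup_{T \in \mathscr{P}}[T]$ for some $\mathscr{P} \subseteq \bigcup_{q \in Q}\mu_{M}(q)$. A short check using condition (2) shows every element of $\mu_{M}(q)$ is itself a molecule (if $D \in \mu_{M}(q)$ were not a molecule, the strictly smaller molecule $T^{D}_{q} \in \mu(q)$ would contradict minimality of $D$), so $\mathscr{P} \subseteq \mathscr{D}$ and each $[T] = p(T)$ is a state. I then reduce $\mathscr{P}$ iteratively according to the following rules: (a) if some $T \in \mathscr{P}$ is redundant, i.e. $[T] \subseteq \bigcup_{T' \neq T}[T']$, delete it from $\mathscr{P}$; (b) if some $T \in \mathscr{P}$ with $|[T]| \geq 2$ together with the family $\mathscr{D}' \subseteq \mathscr{D}$ furnished by condition (3) produces an exceptional element $q^{*} \in [T] \setminus [\mathscr{D}']$ that is already covered by $\bigcup_{T' \neq T}[T']$, replace $T$ in $\mathscr{P}$ by $\mathscr{D}'$. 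Both moves preserve the equation $K = \bigcup_{F \in \mathscr{P}}[F]$, and both strictly decrease the multiset $\{|[F]|\}_{F \in \mathscr{P}}$ in the Dershowitz--Manna multiset order: case (b) removes the value $|[T]|$ and inserts values $|[F]|$ all satisfying $|[F]| \leq |[\mathscr{D}']| = |[T]|-1 < |[T]|$, since $[F] \subseteq [\mathscr{D}'] = [T] \setminus \{q^{*}\}$. This order on finite multisets of positive integers is well-founded, so the procedure terminates at some configuration $\mathscr{P}^{*}$.

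Finally, I analyze $\mathscr{P}^{*}$. If $\mathscr{P}^{*}$ contains some $T$ with $|[T]| \geq 2$, then by termination the associated $q^{*}$ from (3) is not covered by the other $[T']$'s, and hence $K \setminus \{q^{*}\} = [\mathscr{D}'] \cup \bigcup_{T' \neq T}[T']$ is a union of states and therefore a state of the knowledge space, exhibiting $q^{*}$ as the required removable item. Otherwise every $T \in \mathscr{P}^{*}$ satisfies $|[T]| = 1$, and non-redundancy forces these singletons to be pairwise distinct; removing any one of them leaves a union of the remaining singleton states, again a state. In either case $K \setminus \{q\} \in \mathscr{K}$ for some $q \in K$, so $(Q, \mathscr{K})$ is well-graded and the conclusion follows. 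The delicate point is this termination-plus-case-analysis: condition (3) is exactly what makes the large-$|[T]|$ case shrinkable, and the multiset well-order is what guarantees the reduction halts at a configuration from which a removable item can be read off.
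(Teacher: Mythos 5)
Your proof is correct, and it shares the paper's overall skeleton: Theorem~\ref{1-2} gives that $(Q,\mathscr{K})$ is a knowledge space, the reduction via \cite[Theorem 2.2.4]{falmagne2011learning} turns the problem into the accessibility claim that every nonempty state admits a removable item, and the state is decomposed into molecule-generated pieces $[T]$ so that condition (3) can be used to peel off a single item (your observation that condition (2) forces every element of $\mu_{M}(q)$ to be a molecule, so that $\mathscr{P}\subseteq\mathscr{D}$, is the right glue between Theorem~\ref{ttttt2} and condition (3)). Where you genuinely diverge is in the combinatorial core. The paper proves the accessibility claim by induction on $|K|$: it writes $K=p(T)$ for a minimal skill set $S(K)$, applies the induction hypothesis to the state obtained by deleting one molecule, and then runs a nested ``after finitely many steps'' loop governed by conditions (a)--(c); this implicitly strengthens the inductive statement (the smaller state minus its removable item must again be of the form $p(\mathscr{B})$ for $\mathscr{B}\subseteq\mathscr{D}$, which is more than the Claim asserts) and the iteration is only sketched. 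You instead set up a rewriting procedure on the decomposition $\mathscr{P}$ --- delete a redundant molecule, or replace a molecule by the family $\mathscr{D}'$ supplied by (3) when its exceptional item is already covered elsewhere --- and prove termination by a strictly decreasing multiset measure, reading the removable item off the terminal configuration. This buys a self-contained argument with no hidden strengthening of an induction hypothesis, at the cost of invoking well-foundedness of the multiset order (harmless here, since $\mathscr{D}$ is finite). Two small points to make explicit: fix once and for all a choice of $\mathscr{D}'$, hence of the exceptional item $q^{*}$, for each $T\in\mathscr{D}$ with $|[T]|\ge 2$, so that ``the associated $q^{*}$'' in the terminal analysis is well defined; and note that $T\notin\mathscr{D}'$ automatically, since $T\in\mathscr{D}'$ would force $[T]\setminus[\mathscr{D}']=\emptyset$, contradicting $|[T]\setminus[\mathscr{D}']|=1$, so move (b) really does remove the value $|[T]|$ from the multiset.
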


\begin{proof}
By theorem~\ref{1-2}, $(Q,\mathscr{K})$ is a knowledge space. Moreover, by condition~(2), for any $K\in\mathscr{K}$ there exists a minimum set~$S(K)\subseteq S$ such that~$K=p(T)$, where~$T=\{\frac{T_{s}(s)}{s}: s\in S(K)\}$. By \cite[Theorem 2.2.4]{falmagne2011learning}, it suffices to prove that~$(Q,\mathscr{K})$ is well-graded. In fact, it only need to prove the following claim.

\smallskip
{\bf Claim:}  For each state~$K\in \mathscr{K}$, there exists~$q\in Q$ with~$K\backslash \{q\}\in \mathscr{K}$.

\smallskip
We shall prove claim by induction on the cardinality of any states of $\mathscr{K}$. If~$|K|\leq 2$, the conclusion is obvious by the assumption of~(3). Assume that for each $K\in\mathscr{K}$ with~$|K|\leq n$ there is~$q\in Q$ such that~$K\backslash \{q\}\in \mathscr{K}$. Now take any $K\in\mathscr{K}$ with~$|K|=n+1$. Then there exists a minimum set~$S(K)\subseteq S$ such that~$K=p(T)$, where~$T=\{\frac{T_{s}(s)}{s}: s\in S(K)\}$.

If there exists~$s_{K}\in S(K)$ such that~$|[\frac{T_{s_{K}}(s(K))}{s_{K}}]|=1$, then $$K\backslash [\frac{T_{s_{K}}(s_{K})}{s_{K}}]=p(\{\frac{T_{s}(s)}{s}: s\in S(K)\backslash \{{s_{K}}\}\})\in \mathscr{K}$$ by the minimality of~$S(K)$. Then the proof is complete. Now we can assume that $|[\frac{T_{s}(s)}{s}]|\geq 2$ for each~$s\in S(K)$.
If~$|S(K)|=1$, then conclusion is obvious by the assumption of (3). By the minimality of~$S(K)$, if ~$|S(K)|\geq 2$, then~$p(\{\frac{T_{s}(s)}{s}:s\in S(K)\backslash \{s_{K}\}\})\in \mathscr{K}$ for each~$s_{K}\in S(K)$ and~$|p(\{\frac{T_{s}(s)}{s}: s\in S(K)\backslash \{s_{K}\}\})|\leq n$. Fix $s_{K}\in S(K)$. By our assumption, there exists~$q(s_{K})\in p(\{\frac{T_{s}(s)}{s}: s\in S(K)\backslash \{s_{K}\}\})$ and subfamily~$\mathscr{B}\subseteq \mathscr{D}$ satisfying~$p(\{\frac{T_{s}(s)}{s}:s\in S(K)\backslash \{s_{K}\}\})\backslash \{q(s_{K})\}=p({\mathscr{B}})$. If~$q(s_{K})\in [\frac{T_{s_{K}}(s_{K})}{s_{K}}]$ and there exists~$\mathscr{A}\subseteq \mathscr{D}$ such that~$[\frac{T_{s_{K}}(s_{K})}{s_{K}}]\backslash \{q(s_{K})\}=p(\mathscr{A})$, then it follows that
\begin{equation*}
\displaystyle
p(T)\backslash \{q(s_{K})\}=p(\mathscr{A}\cup \mathscr{B})=p(\mathscr{A})\cup p(\mathscr{B}),
\end{equation*}
hence the proof is completed. Suppose not, we assume that $q(s_{K})\in [\frac{T_{s_{K}}(s_{K})}{s_{K}}]$ such that  $[\frac{T_{s_{K}}(s_{K})}{s_{K}}]\setminus\{q(s_{K})\}\neq P(\mathscr{A})$ for any $\mathscr{A}\subseteq\mathscr{D}$.
Since~$Q$ is finite, it follows that~$[\frac{T_{s_{K}}(s_{K})}{s_{K}}]$ and~$p(\mathscr{B})$ are also finite. So we can do the same way on~$\mathscr{B}$ as above. From the minimality of $S(K)$, after finitely many steps we get~$q\in p(\{\frac{T_{s}(s)}{s}:s\in S(K)\backslash \{s_{K}\}\})$ and~$\mathscr{C}_{1}, \mathscr{C}_{2}\subseteq \mathscr{D}$ satisfying the conditions~$(a)$-$(c)$ as follows:

\smallskip
(a)~$p(\mathscr{C}_{1})\backslash \{q\}=p(\mathscr{C}_{2})$;

\smallskip
(b) either~$q\not\in [\frac{T_{s_{K}}(s_{K})}{s_{K}}]$ or~$q\in [\frac{T_{s_{K}}(s_{K})}{s_{K}}]$ and there exists ~$\mathscr{A}\subset\mathscr{D}$ such that~$[\frac{T_{s_{K}}(s_{K})}{s_{K}}]\backslash \{q\}=P(\mathscr{A})$:

\smallskip
(c)~$P(\mathscr{C}_{1})\cup[\frac{T_{s_{K}}(s_{K})}{s_{K}}]=P(T)$.
\smallskip

If~$q\notin [\frac{T_{s_{K}}(s_{K})}{s_{K}}]$, then it follows from (a) and (c) that~$p(\mathscr{C}_{2})\cup [\frac{T_{s_{K}}(s_{K})}{s_{K}}]=p(T)\backslash \{q\}\in \mathscr{K}$; If~$q\in [\frac{T_{s_{K}}(s_{K})}{s_{K}}]$ and there exists~$\mathscr{A}\subseteq \mathscr{D}$ such that~$[\frac{T_{s_{K}}(s_{K})}{s_{K}}]\backslash \{q\}=p(\mathscr{A})$, then it follows from (a) and (c) that~$P(\mathscr{C}_{2})\cup p(\mathscr{A})=p(T)\backslash \{q\}\in \mathscr{K}$. So we can derive that~$(Q, \mathscr{K})$ is well-graded.

Consequently, Claim is proved and thus~$(Q, \mathscr{K})$ is a learning space.
\end{proof}

\begin{cor}\label{cc}
Assume $(Q, S, \mu)$ is a skill multimap such that each $\mu(q)$ is finite. If the following conditions~(1)-(3) hold, then the delineated knowledge structure~$(Q, \mathscr{K})$ is a learning space.

\smallskip
$(1)$~$Q$ is finite;

\smallskip
$(2)$ For each~$q\in Q$ and~$C\in \mu(q)$, there exists~$s_{C}\in C$ such that ${s_{C}}\in \mu(q)$;

\smallskip
$(3)$ For each~$s\in S$, if~$|[s]|\geq 2$, then there is~$S'\subseteq S$ such that~$|[s]\backslash [S']|=1$ and~$[S']\subseteq [s]$.
\end{cor}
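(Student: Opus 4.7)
The plan is to reduce Corollary \ref{cc} directly to Theorem \ref{ff} by embedding the given skill multimap into the fuzzy framework via characteristic functions. Specifically, for each subset $C\subseteq S$, identify $C$ with the fuzzy set $\chi_C\in\mathscr{F}(S)$ defined by $\chi_C(s)=1$ if $s\in C$ and $\chi_C(s)=0$ otherwise. This converts the skill multimap $(Q,S,\mu)$ into a fuzzy skill multimap in which every competency takes only the values $0$ and $1$. The operations $\cap,\cup$ on subsets correspond to $\wedge,\vee$ on the characteristic functions, and the subset order corresponds to the pointwise order, so the problem function and the delineated knowledge structure are unchanged under this identification.

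Under this embedding, a molecule $T\in\mathscr{F}(S)$ with $T\in\bigcup_{q\in Q}\mu(q)$ must equal $\chi_{\{s\}}=\frac{1}{s}$ for some $s$ with $\{s\}\in\bigcup_{q\in Q}\mu(q)$, so the set $\mathscr{D}$ of Theorem \ref{ff} becomes exactly $\{\frac{1}{s}:\{s\}\in\bigcup_{q\in Q}\mu(q)\}$. For $T=\frac{1}{s}$ the set $[T]=\{q\in Q:\exists C\in\mu(q),\ C\subseteq T\}$ equals $\{q\in Q:\{s\}\in\mu(q)\}$, which is precisely $[s]$ as used in the statement of the corollary. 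Hence condition (3) of the corollary translates verbatim to condition (3) of Theorem \ref{ff}, and condition (2) of the corollary (existence of $s_C\in C$ with $\{s_C\}\in\mu(q)$) is the crisp instance of condition (2) of Theorem \ref{ff} (existence of a molecule $\frac{T^C_q(s)}{s}\in\mu(q)$ contained in $C$). Condition (1) is identical.

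Once the translation is in place, Theorem \ref{ff} applies and yields that $(Q,\mathscr{K})$ is a learning space. I expect no real obstacle here, since all the combinatorial content, in particular the inductive well-gradedness argument on $|K|$ together with the case analysis on whether $|[s_K]|=1$ or $|[s_K]|\ge 2$, already lives inside the proof of Theorem \ref{ff}. The only point to verify with some care is the naturality of the embedding: that subset inclusion agrees with the fuzzy inclusion on $\{0,1\}$-valued fuzzy sets, and that the induced problem functions coincide under the identification $C\leftrightarrow\chi_C$. Both are immediate from the definitions given in the preliminaries, so the proof essentially reduces to citing Theorem \ref{ff}.
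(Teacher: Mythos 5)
Your proposal is correct and matches the paper's (implicit) treatment: the paper gives no separate proof of Corollary~\ref{cc}, presenting it as the crisp specialization of Theorem~\ref{ff}, which is exactly your reduction via the identification $C\leftrightarrow\chi_{C}$. Your verification that molecules correspond to singletons $\{s\}$, that $[\frac{1}{s}]=[s]$, and that conditions (1)--(3) translate verbatim is the whole content of the deduction, so nothing is missing.
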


Theorem~\ref{ff} gives a partial answer to Question~\ref{1-5}. Moreover,
the delineated knowledge structure~$(Q,\mathscr{K})$ is an antimatroid if~$(Q, \mathscr{K})$ is a learning space from~\cite[Theorem 2.2.4]{falmagne2011learning}. However, the conditions (1)-(3) in Theorem~\ref{ff} and Corollary~\ref{cc} respectively are sufficient and not necessary, see the following examples.

\begin{ex}
{Let~$Q=\{q_{1}, q_{2}\}$ and~$S=\{s_{1}, s_{2}, s_{3}\}$. A fuzzy skill multimap be defined by
\begin{equation*}
\begin{aligned}
&\mu(q_{1})=\{\{\frac{0.1}{s_{1}}, \frac{0.3}{s_{2}}\}, \{\frac{0.4}{s_{2}}, \frac{0.6}{s_{3}}\}\},
&\mu(q_{2})=\{\{\frac{0.4}{s_{2}}, \frac{0.6}{s_{3}}\}\}.
\end{aligned}
\end{equation*}
Then the delineated knowledge structure
$$\mathscr{K}=\{\emptyset, \{q_{1}\}, Q\}$$
is a learning space. However, any molecule~$\frac{T^{C}_{q}(s^{C}_{q})}{s^{C}_{q}}\subseteq C$ for each~$C\in \mu(q_{1})$ is not contained in~$\mu(q_{1})$. The other case is analogous. Clearly, the reverse of Theorem~\ref{ff} is not true. Indeed, the condition (2) in the assumption of Theorem~\ref{ff} does not hold.}
\end{ex}

\begin{ex}
{Let~$Q=\{q_{1}, q_{2}\}$ and~$S=\{s_{1}, s_{2}, s_{3}\}$. A fuzzy skill multimap be defined by
\begin{equation*}
\begin{aligned}
&\mu(q_{1})=\{\{s_{1}, s_{2}\}, \{s_{2}, s_{3}\}\},
&\mu(q_{2})=\{\{s_{2}, s_{3}\}\}.
\end{aligned}
\end{equation*}
Then the delineated knowledge structure
$$\mathscr{K}=\{\emptyset, \{q_{1}\}, Q\}$$
is a learning space. For any item~$s_{C}\in C$ for each~$C\in \mu(q_{1})$, the set~$\{s_{C}\}$ is not contained in~$\mu(q_{1})$ though. The other case is analogous. Clearly, the reverse of Corollary~\ref{cc} is not true. Indeed, the condition (2) in the assumption of Corollary~\ref{cc} does not hold.}
\end{ex}

\begin{defn}[\cite{falmagne2011learning}]
{\rm Suppose that $(Q, \mathscr{K})$ is a discriminative knowledge structure. For every $K\in\mathscr{K}$, the set $K^{\mathscr{I}}=\{t\in K: K\setminus\{t\}\in\mathscr{K}\}$ is called the {\it inner fringe} of $K$, and the set $K^{\mathscr{O}}=\{t\in Q\setminus K: K\cup\{t\}\in\mathscr{K}\}$ is called the {\it outer fringe} of $K$. Further, the union of the inner and outer fringes of $K$ is called the {\it fringe} of $K$, which is denoted by $$K^{\mathscr{F}}=K^{\mathscr{I}}\cup K^{\mathscr{O}}.$$}
\end{defn}

Assume $(Q, \mathscr{K})$ is a knowledge structure delineated by a fuzzy skill multimaps~$(Q, S, \mu)$ which satisfies the assumption in Theorem~\ref{1-2}. Then, for each state~$K\in \mathscr{K}$, there exists a minimal fuzzy skill set~$T_{min}\in \mathscr{F}(S)$ such that~$K=p(T_{min})$ from Theorem~\ref{1-2}.

\begin{prop}
Assume $(Q, \mathscr{K})$ is a knowledge structure delineated by a fuzzy skill multimap~$(Q, S, \mu)$ which satisfies the assumption in Theorem~\ref{1-2}. Then, for each~$K\in \mathscr{K}$ and~$q\in Q\backslash K$, we have~$q\in K^{\mathscr{O}}$ iff there exists a molecule~$\frac{T_{q}(s_{q})}{s_{q}}\in \mu(q)$ such that~$[\frac{T_{q}(s_{q})}{s_{q}}]\backslash K=\{q\}$.
\end{prop}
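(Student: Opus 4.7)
The plan is to prove both implications directly, repeatedly invoking Theorem~\ref{1-2}'s hypothesis that every competence in $\mu(r)$ dominates a molecule still lying in $\mu(r)$, together with the fact that every state of $\mathscr{K}$ is of the form $p(T)$ for some $T\in\mathscr{F}(S)$.

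For the forward direction, assume $q\in K^{\mathscr{O}}$, so $K\cup\{q\}\in\mathscr{K}$; fix any $T\in\mathscr{F}(S)$ with $p(T)=K\cup\{q\}$. Since $q\in p(T)$, some $C\in\mu(q)$ satisfies $C\subseteq T$, and the standing hypothesis provides a molecule $\frac{T_{q}^{C}(s_{q})}{s_{q}}\in\mu(q)$ contained in $C$. This is the witness. I would then verify that $[\frac{T_{q}^{C}(s_{q})}{s_{q}}]\setminus K=\{q\}$: the inclusion $q\in[\frac{T_{q}^{C}(s_{q})}{s_{q}}]$ is immediate because the molecule itself belongs to $\mu(q)$; conversely, any $r\in[\frac{T_{q}^{C}(s_{q})}{s_{q}}]$ admits a competence $D\in\mu(r)$ with $D\subseteq\frac{T_{q}^{C}(s_{q})}{s_{q}}\subseteq T$, forcing $r\in p(T)=K\cup\{q\}$, and hence $r\in K$ or $r=q$.

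For the reverse direction, suppose a molecule $\frac{T_{q}(s_{q})}{s_{q}}\in\mu(q)$ satisfies $[\frac{T_{q}(s_{q})}{s_{q}}]\setminus K=\{q\}$. Choose $T_{\min}\in\mathscr{F}(S)$ with $p(T_{\min})=K$ and set $F:=T_{\min}\cup\frac{T_{q}(s_{q})}{s_{q}}$. The goal is to show $p(F)=K\cup\{q\}$, which yields $q\in K^{\mathscr{O}}$. The inclusion $K\cup\{q\}\subseteq p(F)$ is immediate since every competence witnessing membership in $K$ already sits inside $T_{\min}\subseteq F$, and $\frac{T_{q}(s_{q})}{s_{q}}\in\mu(q)$ itself lies inside $F$.

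The reverse inclusion $p(F)\subseteq K\cup\{q\}$ is the main technical hurdle, because a competence absorbed by $F$ may a priori draw mass on the skill $s_{q}$ that is not already accounted for by $T_{\min}$. My strategy is to invoke the molecule-absorption hypothesis a second time: given $r\in p(F)$ with witness $C\in\mu(r)$, $C\subseteq F$, extract a molecule $\frac{T_{r}^{C}(s^{\prime})}{s^{\prime}}\in\mu(r)$ contained in $C$, and split on its support. When $s^{\prime}\neq s_{q}$, the fuzzy sets $F$ and $T_{\min}$ agree on $s^{\prime}$, so the molecule lies inside $T_{\min}$ and $r\in p(T_{\min})=K$. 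When $s^{\prime}=s_{q}$, the max-semantics $F(s_{q})=\max(T_{\min}(s_{q}),T_{q}(s_{q}))$ yields two subcases: either $T_{r}^{C}(s_{q})\leq T_{\min}(s_{q})$, again placing $r$ in $K$, or $T_{r}^{C}(s_{q})\leq T_{q}(s_{q})$, placing $r$ in $[\frac{T_{q}(s_{q})}{s_{q}}]\subseteq K\cup\{q\}$ by hypothesis. This case analysis on the molecule's support skill, driven by the max-union on the single skill $s_{q}$, is the only delicate step; the rest is routine bookkeeping with the definitions of $p$ and $[\cdot]$.
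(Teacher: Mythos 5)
Your argument is correct and follows essentially the same route as the paper: both directions hinge on extracting, from a witnessing competence, a molecule that still lies in $\mu(q)$, and on representing states as $p(T)$ with $F=T_{\min}\cup\frac{T_{q}(s_{q})}{s_{q}}$ in the sufficiency direction. The only differences are cosmetic — where the paper invokes the identity $p(T_{\min}\cup\frac{T_{q}(s_{q})}{s_{q}})=p(T_{\min})\cup p(\frac{T_{q}(s_{q})}{s_{q}})$ already established in the proof of Theorem~\ref{1-2}, you re-derive the needed inclusion by a direct case split on the support of the extracted molecule, and you argue the other implication directly rather than by contradiction.
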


\begin{proof}
Sufficiency. Take any~$K\in \mathscr{K}$. From Theorem~\ref{1-2}, there exists a minimal fuzzy skill set~$T_{min}\in \mathscr{F}(S)$ such that~$K=p(T_{min})$. From the assumption, there exists a molecule~$\frac{T_{q}(s_{q})}{s_{q}}\in \mu(q)$ such that~$[\frac{T_{q}(s_{q})}{s_{q}}]\backslash K=\{q\}$, then
\begin{equation*}
\displaystyle
p(T_{min})\bigcup\{q\}=p(T_{min})\bigcup p(\frac{T_{q}(s_{q})}{s_{q}})=p(T_{min}\cup \frac{T_{q}(s_{q})}{s_{q}}).
\end{equation*}
It follows that~$K\bigcup\{q\}=p(T_{S(K)}\bigcup \frac{T_{q}(s_{q})}{s_{q}})\in \mathscr{K}$.

Necessity. Assume that~$q\in K^{\mathscr{O}}$, then~$K\bigcup \{q\}\in \mathscr{K}$ and $q\not\in K$. Suppose that~$|[\frac{T(s)}{s}]\backslash K|\neq 1$ for each molecule~$\frac{T(s)}{s}\in \mu(q)$. Then, since $q\not\in K$, it follows that $|[\frac{T(s)}{s}]\backslash K|\geq 2$. However, since~$(Q, \mathscr{K})$ is a knowledge space by Theorem~\ref{1-2},  then there exists a molecule~$\frac{T_{s_{0}}(s_{0})}{s_{0}}\in \mu(q)$ such that $K\bigcup \{q\}= K\cup p(\frac{T_{s_{0}}(s_{0})}{s_{0}})\in \mathscr{K}$, hence  $|[\frac{T_{s_{0}}(s_{0})}{s_{0}}]\backslash K|=1$, which is a contradiction.
\end{proof}

\begin{prop}
Assume~$(Q, \mathscr{K})$ is a knowledge space delineated by a fuzzy skill multimap~$(Q, S, \mu)$ which satisfies the assumption in Theorem~\ref{1-2}. Then, for every~$K\in \mathscr{K}$ and~$q\in Q\backslash K$, we have~$r\in K^{\mathscr{I}}$ iff for each~$q\in K\backslash \{r\}$, there exists a molecule~$\frac{T_{q}(s_{q})}{s_{q}}\in \mu(q)$ such that~$[\frac{T_{q}(s_{q})}{s_{q}}]\subseteq K\backslash \{r\}$.
\end{prop}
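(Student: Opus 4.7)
The plan is to prove both directions using two elementary facts: the knowledge space $\mathscr{K}$ is closed under arbitrary unions, and the problem function $p$ is monotone, so that $F_{1}\subseteq F_{2}$ implies $p(F_{1})\subseteq p(F_{2})$ directly from Definition~\ref{aaaa}. I will also repeatedly use the trivial identification $[T]=p(T)$ for any $T\in\mathscr{F}(S)$, which is again immediate from the definitions.

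For the sufficiency direction, I would assume that for each $q\in K\setminus\{r\}$ some molecule $\frac{T_{q}(s_{q})}{s_{q}}\in\mu(q)$ satisfies $[\frac{T_{q}(s_{q})}{s_{q}}]\subseteq K\setminus\{r\}$. Since $\frac{T_{q}(s_{q})}{s_{q}}\in\mu(q)$, one has $q\in[\frac{T_{q}(s_{q})}{s_{q}}]$, which gives
$$K\setminus\{r\}=\bigcup_{q\in K\setminus\{r\}}\left[\frac{T_{q}(s_{q})}{s_{q}}\right].$$
Each set $[\frac{T_{q}(s_{q})}{s_{q}}]=p(\frac{T_{q}(s_{q})}{s_{q}})$ is a state, and because $(Q,\mathscr{K})$ is a knowledge space this union lies in $\mathscr{K}$, whence $r\in K^{\mathscr{I}}$.

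For the necessity direction, I would assume $r\in K^{\mathscr{I}}$, so that $K\setminus\{r\}\in\mathscr{K}$ and hence $K\setminus\{r\}=p(F)$ for some $F\in\mathscr{F}(S)$. Fix $q\in K\setminus\{r\}$; since $q\in p(F)$ there is $C\in\mu(q)$ with $C\subseteq F$, and the hypothesis of Theorem~\ref{1-2} then supplies a molecule $\frac{T_{q}^{C}(s_{q})}{s_{q}}\in\mu(q)$ with $\frac{T_{q}^{C}(s_{q})}{s_{q}}\subseteq C\subseteq F$. Monotonicity of $p$ yields $[\frac{T_{q}^{C}(s_{q})}{s_{q}}]=p(\frac{T_{q}^{C}(s_{q})}{s_{q}})\subseteq p(F)=K\setminus\{r\}$, which is precisely the desired condition.

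No serious obstacle is anticipated, as the argument is essentially bookkeeping around Definition~\ref{aaaa}, the molecule-extraction hypothesis of Theorem~\ref{1-2}, and the union closure of $\mathscr{K}$. The only point requiring care is to resist any temptation, in the necessity part, to realize $K\setminus\{r\}$ as $p$ of a single molecule; in general $F$ need not be a molecule, so one must extract a separate molecule for each item $q\in K\setminus\{r\}$ and pass to their images via monotonicity rather than by picking one global witness.
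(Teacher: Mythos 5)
Your proof is correct and follows essentially the same route as the paper's: sufficiency by writing $K\setminus\{r\}$ as the union of the molecule-generated states $p\bigl(\frac{T_{q}(s_{q})}{s_{q}}\bigr)=[\frac{T_{q}(s_{q})}{s_{q}}]$ and invoking closure under union, and necessity by extracting, from each competency witnessing $q\in p(F)=K\setminus\{r\}$, a molecule in $\mu(q)$ and using monotonicity of $p$. The only cosmetic difference is that the paper casts the necessity direction as a proof by contradiction, whereas you argue it directly; the underlying argument is identical.
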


\begin{proof}
Sufficiency. Take any~$K\in \mathscr{K}$. Assume that for any~$q\in K\backslash \{r\}$, there exists a molecule~$\frac{T_{q}(s_{q})}{s_{q}}\in \mu(q)$ such that~$[\frac{T_{q}(s_{q})}{s_{q}}]\subseteq K\backslash \{r\}$. From Theorem~\ref{1-2},~$K\backslash \{r\}=\bigcup_{q\in K\backslash \{r\}}p(\frac{T_{q}(s_{q})}{s_{q}})=p(\bigcup_{q\in K\backslash \{r\}}\frac{T_{q}(s_{q})}{s_{q}})$. It clear that~$K\backslash \{r\}\in \mathscr{K}$ and~$r\in K^{\mathscr{I}}$.

Necessity. Assume that~$r\in K^{\mathscr{I}}$, then~$K\backslash \{r\}\in \mathscr{K}$ and $r\in K$. Suppose there exists~$q_{0}\in K\backslash \{r\}$ such that for any molecule~$\frac{T_{q_{0}}(s_{q_{0}})}{s_{q_{0}}}\in \mu(q_{0})$,~$[\frac{T_{q_{0}}(s_{q_{0}})}{s_{q_{0}}}]\backslash (K\backslash \{r\})\neq \emptyset$. However, since~$(Q, \mathscr{K})$ is a knowledge space by Theorem~\ref{1-2}, then for each~$q\in K\backslash \{r\}$, there exists molecule~$\frac{T_{q}(s_{q})}{s_{q}}\in \mu(q)$ such that~$K\backslash \{r\}=\bigcup_{q\in K\backslash \{r\}} p(\frac{T_{q}(s_{q})}{s_{q}})=p(\bigcup_{q\in K\backslash \{r\}}(\frac{T_{q}(s_{q})}{s_{q}}))$. This a contradiction with our assumption.
\end{proof}

\smallskip
\subsection{The separability of items in knowledge structures delineated by fuzzy skill multimaps}
In order to give the necessary and sufficient conditions such that the delineated knowledge structures by fuzzy skill multimaps are discriminative and bi-discriminative respectively, this section gives an introduction to the concept of refining a family of fuzzy subsets by a similar definition in Ge and Li~\cite{ref4}.

\begin{defn}
{\rm Assume $\mathscr{U}$ and~$\mathscr{V}$ are two family of fuzzy sets of a non-empty set $X$. If for each~$U\in \mathscr{U}$ there exists $V\in \mathscr{V}$ such that $V\subseteq U$, then we say that~$\mathscr{U}$ is {\it refined by~$\mathscr{V}$}, and is denoted by~$\mathscr{U}\preccurlyeq \mathscr{V}$; if not, $\mathscr{U}$ is not refined by $\mathscr{V}$, and is denoted by~$\mathscr{U}\not\preccurlyeq \mathscr{V}$. Especially, if~$\mathscr{U}=\{U\}$, then~$U$ is (resp., is not) {\it refined by~$\mathscr{V}$}, and is denoted by~ $U\preccurlyeq \mathscr{V}$ (resp.,~$U\not\preccurlyeq \mathscr{V}$).}
\end{defn}

\begin{rem}
For a fuzzy skill multimap~$(Q,S,\mu)$ and any $q, r \in Q$, it is easy to see that $\mu(q)\not\preccurlyeq \mu(r)$ implies that there is~$C_{q}\in \mu(q)$ such that~$C_{q}\not\preccurlyeq \mu(r)$, i.e., for any~$C_{r}\in \mu(r)$, there exists~$s_{C_{r}}\in S$ such that~$C_{q}(s_{C_{r}})<C_{r}(s_{C_{r}})$.
\end{rem}

The following theorem, which gives a partial answer to Question~\ref{kkk}, provides a necessary and sufficient condition such that the knowledge structures delineated by fuzzy skill multimaps are discriminative.

\begin{thm}~\label{fff}
Assume $(Q,S,\mu)$ is a fuzzy skill multimap such that $(Q, \mathscr{K})$ is the knowledge structure delineated by~$(Q,S,\mu)$. Consider the following statements.

\smallskip
(1)~$(Q,\mathscr{K})$ is discriminative.

\smallskip
(2)~for any~$q,r\in Q$ with~$q\neq r$, we have $\mu(r)\not\subseteq \mu(q)$ or~$\mu(q)\not\subseteq \mu(r)$.

\smallskip
(3) for any~$q,r\in Q$ with~$q\neq r$, we have $\mu(q)\neq \mu(r)$.

\smallskip
(4)~for any~$q,r\in Q$ with~$q\neq r$, we have $\mu(r)\not\preccurlyeq \mu(q)$ or~$\mu(q)\not\preccurlyeq \mu(r)$.

Then~$(1)\Longleftrightarrow (4)\Longrightarrow (2)\Longleftrightarrow (3)$. Further, if~$(Q, S, \mu)$ is a fuzzy skill function, then~$(1)\Longleftrightarrow (2)\Longleftrightarrow (3)\Longleftrightarrow (4)$
\end{thm}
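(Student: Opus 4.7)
The plan is to route everything through a single translation lemma,
\[
\mu(q)\preccurlyeq\mu(r)\iff \mathscr{K}_{q}\subseteq\mathscr{K}_{r},
\]
and then read off each implication as an easy consequence. For the forward direction I would take $K\in\mathscr{K}_{q}$, write $K=p(F)$ for some $F\in\mathscr{F}(S)$, and pick $C_{q}\in\mu(q)$ with $C_{q}\subseteq F$; the refinement supplies $C_{r}\in\mu(r)$ with $C_{r}\subseteq C_{q}\subseteq F$, so $r\in p(F)=K$ and $K\in\mathscr{K}_{r}$. For the reverse direction, given any $C_{q}\in\mu(q)$ I would apply the problem function to $C_{q}$ itself: since $C_{q}\subseteq C_{q}$, we have $q\in p(C_{q})$, hence $p(C_{q})\in\mathscr{K}_{q}\subseteq\mathscr{K}_{r}$, so $r\in p(C_{q})$ and there exists $C_{r}\in\mu(r)$ with $C_{r}\subseteq C_{q}$, as required.

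With the lemma in hand, $(1)\Leftrightarrow(4)$ is immediate: $\mathscr{K}_{q}\neq\mathscr{K}_{r}$ is equivalent to $\mathscr{K}_{q}\not\subseteq\mathscr{K}_{r}$ or $\mathscr{K}_{r}\not\subseteq\mathscr{K}_{q}$, which the lemma transports to $(4)$. The implication $(4)\Rightarrow(2)$ is the contrapositive of the tautology ``$\mu(q)\subseteq\mu(r)$ implies $\mu(q)\preccurlyeq\mu(r)$'' (take $V:=U$), applied in both orders. The equivalence $(2)\Leftrightarrow(3)$ is just the set-theoretic fact that equality is the conjunction of the two inclusions.

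For the last part I would assume $(Q,S,\mu)$ is a fuzzy skill function and close the cycle by proving $(3)\Rightarrow(4)$ contrapositively: suppose $\mu(q)\preccurlyeq\mu(r)$ and $\mu(r)\preccurlyeq\mu(q)$, and show $\mu(q)=\mu(r)$. For any $C\in\mu(r)$, refinement yields $C'\in\mu(q)$ with $C'\subseteq C$, and then $C''\in\mu(r)$ with $C''\subseteq C'\subseteq C$. Because the elements of $\mu(r)$ are pairwise incomparable, $C''=C$, which sandwiches $C=C'\in\mu(q)$; hence $\mu(r)\subseteq\mu(q)$, and the symmetric argument gives the reverse inclusion. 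The incomparability hypothesis is used exactly once, at this sandwich step, and it is precisely the reason the equivalence $(4)\Leftrightarrow(2)$ can fail in general; that identification of where the antichain property is needed is the main conceptual (though not technical) obstacle I would emphasise in writing the proof.
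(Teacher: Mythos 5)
Your proposal is correct, and it is organized differently from the paper's argument in a way worth noting. The paper proves $(1)\Leftrightarrow(4)$ by two direct arguments through the $T_{0}$ characterization (Lemma~\ref{1-1}) and the problem function, and for the skill-function case it proves $(2)\Rightarrow(1)$ directly: it picks $C_{q}\in\mu(q)\setminus\mu(r)$, and if $r\in p(C_{q})$ it extracts a strictly smaller $C_{r}\subseteq C_{q}$ and shows $q\notin p(C_{r})$ using pairwise incomparability of $\mu(q)$. You instead isolate the translation lemma $\mu(q)\preccurlyeq\mu(r)\iff\mathscr{K}_{q}\subseteq\mathscr{K}_{r}$ (which is valid with no hypotheses, and whose two directions are exactly the two problem-function computations the paper performs inline in $(1)\Rightarrow(4)$ and $(4)\Rightarrow(1)$), after which $(1)\Leftrightarrow(4)$ and $(4)\Rightarrow(2)\Leftrightarrow(3)$ become purely formal. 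For the skill-function case you close the cycle via $(3)\Rightarrow(4)$ rather than $(2)\Rightarrow(1)$, i.e.\ via the order-theoretic fact that mutual refinement between antichains forces equality; your sandwich argument $C''\subseteq C'\subseteq C$ with $C'',C\in\mu(r)$ is sound, and it has the advantage of confining the use of incomparability to a statement about the multimap alone, with no reference to the delineated structure. The trade-off is that the paper's version of the last step exhibits the separating states $p(C_{q})$, $p(C_{r})$ explicitly, which is more informative for the assessment interpretation, whereas yours is shorter and makes the logical dependencies (in particular, exactly where the antichain hypothesis enters and why $(2)\Rightarrow(4)$ fails for general multimaps) more transparent.
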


\begin{proof}
Obviously, $(2)\Longleftrightarrow (3)$. Now we need to prove~$(1)\Longleftrightarrow (4)$, and prove $(4)\Longrightarrow (2)$ and~$(2)\Longrightarrow (1)$ when $(Q,S,\mu)$ is a fuzzy skill function. Assume $p$ is the problem function induced by fuzzy skill multimap~$(Q, S, \mu)$.

\smallskip
~$(1)\Longrightarrow (4).$ Let $(Q,\mathscr{K})$ be discriminative. Then $(Q,\mathscr{K})$ is a ~$T_{0}$-knowledge structure by (1) of Lemma~\ref{1-1}. Take any $q, r \in Q$ with $q\neq r$. Then there exists~$K\in \mathscr{K}$ such that $K\cap\{q, r\}$ is just an one-point set. Without loss of generality, we may assume that ~$K\cap\{q, r\}=\{q\}$. Because $(Q,\mathscr{K})$ is delineated by $(Q,S,\mu)$, there exists~$T\in \mathscr{F}(S)$ with~$K=p(T)$, which shows that there is~$C_{q}\in \mu(q)$ with $C_{q}\subseteq T$. Now we only claim that~ $C_{q}\not\preccurlyeq \mu(r)$. Indeed, suppose not, then $C_{q}\preccurlyeq \mu(r)$, hence we can take~$C_{r}\in \mu(r)$ with $C_{r}\subseteq C_{q}\subseteq T$. This implies that~$r\in p(T)=K$. However,  $r\notin K$, this is a contradiction.

\smallskip
~(4)$\Longrightarrow (1).$ Let~$(4)$ be true. Then we need to prove that~$(Q,\mathscr{K})$ is a~ $T_{0}$-knowledge structure by (1) of Lemma~\ref{1-1}. Take any ~$q, r \in Q$ with~$q\neq r$. Without loss of generality, we may assume~$\mu(q)\not\preccurlyeq \mu(r)$, which implies that there exists~$C_{q}\in \mu(q)$ such that~$C_{q}\not\preccurlyeq \mu(r)$, i.e., for any~$C_{r}\in \mu(r)$, there exists~$s_{C_{r}}\in S$ such that~$C_{q}(s_{C_{r}})<C_{r}(s_{C_{r}})$. Obviously, we have $q\in p(T)$, where~$T=C_{q}$. Now it needs to show~$r\notin p(T)$. Indeed, if~$r\in p(T)$, then there exists~$C_{r}\in \mu(r)$ such that~$C_{r}(s)\leq T(s)=C_{q}(s)$ for all~$s\in S$. However, $C_{q}\not\preccurlyeq \mu(r)$, this is a contradiction. Thus~$r\notin p(T)$.

\smallskip
~(4)$\Longrightarrow (2).$ Without loss of generality, suppose that~$\mu(q)\not\preccurlyeq \mu(r)$. Then there exists~$C_{q}\in \mu(q)$ such that~$C_{q}\not\preccurlyeq \mu(r)$, i.e., for any~$C_{r}\in \mu(r)$ there exists~$s_{C_{r}}\in S$ such that~$C_{q}(s_{C_{r}})<C_{r}(s_{C_{r}})$. Clearly,~$C_{q}\notin \mu(r)$. Hence~$\mu(q)\not\subseteq \mu(r)$.

\smallskip
~(2)$\Longrightarrow (1).$ Assume~$(Q,S,\mu)$ is a fuzzy skill function, and assume (2) holds. We need to prove that~$(Q,\mathscr{K})$ is a~$T_{0}$-knowledge structure by (1) of Lemma~\ref{1-1}. Take any~$q, r \in Q$ with $q\neq r$. Without loss of generality, we assume~$\mu(q)\not\subseteq \mu(r)$, then $\mu(q)\backslash \mu(r)\neq \emptyset$. Choose any $C_{q}\in \mu(q)\backslash \mu(r)$. Then there is~$S_{0}\subseteq S$ such that~$C_{q}(s)>0$ for each~$s\in S_{0}$ and~$C_{q}(s)=0$ for each~$s\in S\backslash S_{0}$. Hence~$q\in p(C_{q})\in \mathscr{K}$. Clearly, the proof is completed if~$r\notin p(C_{q})$. Then we assume~$r\in p(C_{q})$, hence there exists~$C_{r}\in \mu(r)$ with~$C_{r}\subseteq C_{q}$. From $C_{q}\notin \mu(r)$, it follows that~$C_{r}(s)<C_{q}(s)$ for some~$s\in S_{0}$. Clearly,~$r\in p(C_{r})\in \mathscr{K}$. Therefore, now it only need to prove that~$q\notin p(C_{r})$. Indeed, if~$q\in p(C_{r})$, then there is $C'_{q}\in \mu(q)$ with $C'_{q}\subseteq C_{r}$. Hence $C'_{q}$,  $C_{q}\in \mu(q)$,~$C'_{q}\subseteq C_{q}$ and~$C'_{q}(s)<C_{q}(s)$ for some ~$s\in S_{0}$. However, elements of $\mu(q)$ are pairwise incomparable, this is a contradiction. Therefore, $q\notin p(C_{r})$.
\end{proof}

\vspace{3mm}
By Remark~\ref{bbbb}and Theorem~\ref{fff}, the following two corollaries are obvious.

\begin{cor}
A conjunctive fuzzy skill multimap is injective iff the delineated knowledge structure is discriminative.
\end{cor}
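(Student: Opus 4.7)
The plan is to derive the corollary as a one-line consequence of Theorem~\ref{fff} together with Remark~\ref{bbbb}. In the conjunctive case we have $\mu(q)=\{C_{q}\}$ for a uniquely determined competency $C_{q}\in\mathscr{F}(S)$; thus the map $\mu\colon Q\to 2^{\mathscr{F}(S)\setminus\emptyset}\setminus\emptyset$ is injective if and only if $C_{q}\neq C_{r}$ whenever $q\neq r$, which is the same as saying $\mu(q)\neq\mu(r)$ for all distinct $q,r\in Q$. So ``injective'' is a literal restatement of condition~(3) of Theorem~\ref{fff} in the conjunctive setting.

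Next I would invoke Remark~\ref{bbbb}: every conjunctive fuzzy skill multimap is automatically a fuzzy skill function, because the singleton $\{C_{q}\}$ is trivially a family of pairwise incomparable fuzzy sets. Hence the ``further'' clause of Theorem~\ref{fff} applies, giving the full equivalence $(1)\Longleftrightarrow(2)\Longleftrightarrow(3)\Longleftrightarrow(4)$. In particular $(1)\Longleftrightarrow(3)$, which combined with the identification in the previous paragraph yields exactly the desired biconditional: the delineated knowledge structure $(Q,\mathscr{K})$ is discriminative if and only if the conjunctive fuzzy skill multimap $\mu$ is injective.

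There is no genuine technical obstacle; the only thing worth spelling out carefully in the written proof is the translation between the phrase ``$\mu$ is injective'' and the set-equality condition $\mu(q)=\mu(r)$, since this hinges on the defining feature of the conjunctive model (each $\mu(q)$ being a singleton). Once that identification is made explicit, the corollary follows by quoting Theorem~\ref{fff}(1)$\Leftrightarrow$(3) verbatim.
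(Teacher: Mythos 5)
Your argument is correct and is exactly the route the paper intends: it states this corollary as an immediate consequence of Remark~\ref{bbbb} (conjunctive multimaps are fuzzy skill functions) and the full equivalence in Theorem~\ref{fff}, with injectivity of $\mu$ translating to condition (3) because each $\mu(q)$ is a singleton. Nothing further is needed.
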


\begin{cor}~\label{gggg}
A disjunctive fuzzy skill multimap is injective whenever the delineated knowledge structure is discriminative.
\end{cor}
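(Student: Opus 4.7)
The plan is to deduce this directly from Theorem~\ref{fff}. Assuming the delineated knowledge structure $(Q,\mathscr{K})$ is discriminative, statement~(1) of Theorem~\ref{fff} holds for the fuzzy skill multimap $(Q,S,\mu)$. I would then invoke the implication chain $(1)\Longrightarrow(2)\Longleftrightarrow(3)$ already established in that theorem, which requires no extra hypothesis on $(Q,S,\mu)$, to conclude $\mu(q)\neq\mu(r)$ for every pair of distinct items $q,r\in Q$. By the definition of injectivity of the mapping $\mu\colon Q\to 2^{\mathscr{F}(S)\setminus\emptyset}\setminus\emptyset$, this is exactly the required conclusion.

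I anticipate essentially no obstacle, since the corollary reduces to a single extraction from Theorem~\ref{fff}; in fact the disjunctive hypothesis is never used in the argument. It is listed to parallel the preceding corollary on the conjunctive case, and the relevant asymmetry to highlight in the write-up is the following: a conjunctive multimap is automatically a fuzzy skill function, so the full biconditional $(1)\Longleftrightarrow(2)\Longleftrightarrow(3)\Longleftrightarrow(4)$ of Theorem~\ref{fff} applies and one obtains an ``iff''; a disjunctive multimap, however, need not be a fuzzy skill function, since two molecules of the form $\frac{a}{s}$ and $\frac{b}{s}$ with $a<b$ lying in the same $\mu(q)$ are comparable. Hence the reverse implication from injectivity back to discriminativity is not available in general, which explains why only the one-directional ``whenever'' statement is asserted.
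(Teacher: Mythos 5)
Your proposal is correct and matches the paper's approach: the paper derives this corollary directly from Theorem~\ref{fff} (together with Remark~\ref{bbbb}), exactly as you do via the chain $(1)\Rightarrow(4)\Rightarrow(2)\Leftrightarrow(3)$. Your added observations --- that the disjunctive hypothesis is not actually used in this direction, and that a disjunctive multimap need not be a fuzzy skill function so the converse fails --- are both accurate and are consistent with the counterexample the paper gives immediately afterwards.
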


We give the following examples to show that the assumption~``$(Q,S,\mu)$ is a fuzzy skill function'' of $(3)\Longrightarrow (1)$ in Theorem~\ref{fff} can not be omitted, and the reverse of Corollary~\ref{gggg} does not hold.

\begin{ex}
Let~$Q=\{a,b\}$ and~$S=\{s_{1},s_{2}\}$. A fuzzy skill multimap be defined by~$$\mu(a)=\{\{\frac{0.3}{s_{1}}\}, \{\frac{0.4}{s_{2}}\}, \{\frac{0.7}{s_{2}}, \frac{0.4}{s_{3}}\}\}, \mu(b)=\{\{\frac{0.3}{s_{1}}\},\{\frac{0.4}{s_{2}}\}\}.$$ Clearly, the delineated knowledge structure$$\mathscr{K}=\{\emptyset, \{a,b\}\}$$ is not discriminative. However, $\mu(a)\preccurlyeq \mu(b)$ and~$\mu(b)\preccurlyeq \mu(a)$.
\end{ex}

\begin{ex}
Let~$Q=\{a,b\}$ and~$S=\{s_{1},s_{2}\}$. A disjunctive fuzzy skill multimap be defined by~$$\mu(a)=\{\{\frac{0.6}{s_{1}}\}, \{\frac{0.8}{s_{1}}\}, \{\frac{0.7}{s_{2}}\}\}, \mu(b)=\{\{\frac{0.3}{s_{1}}\},\{\frac{0.4}{s_{2}}\}\}.$$ Clearly, the delineated knowledge structure$$\mathscr{K}=\{\emptyset, \{a,b\}\}$$ is not discriminative. However, $\mu$ is injective.
\end{ex}

For a conjunctive fuzzy skill function $(Q,S,\mu)$, we have $\mu(q)=\{C_{q}\}$ for every~$q\in Q$, where $C_{q}\in \mathscr{F}(S)\backslash \emptyset$; then, for each $q\in Q$, $C_{q}$ is the unique minimum of~$\mu(q)$. For any disjunctive fuzzy skill function~$(Q, S, \mu)$, the set $\mu(q)$ does not have a unique minimum element when $\mu(q)$ contains at least two molecules.

\vspace{3mm}
For each~$q\in Q$, considering minimum elements of $\mu(q)$. The following theorem gives a necessary and sufficient conditions such that the knowledge structures delineated by fuzzy skill multimaps are discriminative, which gives an answer to Question~\ref{kkk}.

\begin{thm}\label{ttt}
Assume~$(Q,\mathscr{K})$ is the knowledge structure delineated by fuzzy skill multimap $(Q,S,\mu)$. For each~$q\in Q$ and any $C\in\mu(q)$, assume that there exists a minimum element~$M_{q, C}\in \mu(q)$. Then (1) $\Leftrightarrow$ (2) $\Leftrightarrow$ (3) for the following three statements.

\smallskip
(1)~$(Q,\mathscr{K})$ is discriminative.

\smallskip
(2) for any pair items $q, r \in Q$ with $q\neq r$, either there exists $C\in\mu(q)$ with $M_{q, C}\neq M_{r, D}$ for any $D\in\mu(r)$, or there exists $D\in\mu(r)$ with $M_{r, D}\neq M_{q, C}$ for any $C\in\mu(q)$.

\smallskip
(3) for any pair items $q, r \in Q$ with $q\neq r$, either there exists $C\in\mu(q)$ with $M_{q, C}\not\preccurlyeq \{M_{r, D}: D\in\mu(r)\}$, or there exists $D\in\mu(r)$ with $M_{r, D}\not\preccurlyeq \{M_{q, C}: C\in\mu(q)\}$.
\end{thm}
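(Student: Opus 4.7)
The plan is to reduce the theorem to Theorem~\ref{fff} by replacing $\mu$ with its ``minimum skeleton''. Concretely, for each $q\in Q$, I set $\mu_{M}(q)=\{M_{q,C}:C\in\mu(q)\}$; by the hypothesis this is nothing other than the full set of minimum elements of $\mu(q)$, which is automatically an antichain, so $(Q,S,\mu_{M})$ is a fuzzy skill function in the sense of Remark~\ref{bbbb}. This reduction is the whole point, because Theorem~\ref{fff} gives the full four-way equivalence precisely when the multimap is a fuzzy skill function.

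The first key step is to verify that $(Q,S,\mu_{M})$ and $(Q,S,\mu)$ delineate the same knowledge structure $(Q,\mathscr{K})$. Let $p$ and $p_{M}$ denote the respective problem functions. Since $\mu_{M}(q)\subseteq\mu(q)$, the inclusion $p_{M}(T)\subseteq p(T)$ is immediate for every $T\in\mathscr{F}(S)$. Conversely, if $q\in p(T)$ then some $C\in\mu(q)$ satisfies $C\subseteq T$; since $M_{q,C}\subseteq C\subseteq T$ with $M_{q,C}\in\mu_{M}(q)$, we get $q\in p_{M}(T)$. Thus $p=p_{M}$ and the delineated structures coincide.

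Next, I apply Theorem~\ref{fff} to the fuzzy skill function $(Q,S,\mu_{M})$, so its statements~(1)--(4) become mutually equivalent. Statement~(4) of Theorem~\ref{fff} applied to $\mu_{M}$ reads ``$\mu_{M}(q)\not\preccurlyeq\mu_{M}(r)$ or $\mu_{M}(r)\not\preccurlyeq\mu_{M}(q)$'', which unfolds element-by-element to exactly condition~(3) of the present theorem; and statement~(3) of Theorem~\ref{fff} applied to $\mu_{M}$ reads ``$\mu_{M}(q)\neq\mu_{M}(r)$'', which (using that $M_{q,C}\notin\mu_{M}(r)$ is the same as $M_{q,C}\neq M_{r,D}$ for every $D\in\mu(r)$) unfolds to exactly condition~(2) of the present theorem. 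Hence~(2) and~(3) are each equivalent to~(1), completing the proof.

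The main (minor) obstacle will be checking that the ``equality'' version~(2) and the ``refinement'' version~(3) really do coincide here, since only the direction $(3)\Rightarrow(2)$ is immediate pointwise (if no $M_{r,D}$ is $\subseteq M_{q,C}$ then certainly no $M_{r,D}$ equals $M_{q,C}$). The converse $(2)\Rightarrow(3)$ is best routed through~(1) via the reduction; if one prefers a direct argument, suppose both $\mu_{M}(q)\preccurlyeq\mu_{M}(r)$ and $\mu_{M}(r)\preccurlyeq\mu_{M}(q)$. Given any $M_{0}\in\mu_{M}(q)$, pick $N\in\mu_{M}(r)$ with $N\subseteq M_{0}$ and then $M'\in\mu_{M}(q)$ with $M'\subseteq N\subseteq M_{0}$; the antichain property of $\mu_{M}(q)$ forces $M'=M_{0}$, hence $N=M_{0}\in\mu_{M}(r)$, and by symmetry $\mu_{M}(q)=\mu_{M}(r)$, contradicting~(2).
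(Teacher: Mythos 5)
Your proposal is correct, and it takes a genuinely different (and cleaner) route than the paper. The paper argues directly around the cycle: $(3)\Rightarrow(2)$ is immediate; $(1)\Rightarrow(3)$ extracts from a separating state $K=p(T)$ a competency $C_q\subseteq T$ and passes to $M_{q,C}\subseteq C_q\subseteq T$; and $(2)\Rightarrow(1)$ exhibits $p(M_{q,C})$ and $p(M_{r,D})$ as separating states, using the minimality of $M_{q,C}$ to derive the contradiction when $q\in p(M_{r,D})$. You instead package the minimum-element hypothesis once and for all into the auxiliary map $\mu_{M}(q)=\{M_{q,C}:C\in\mu(q)\}$, observe that this is an antichain (so $(Q,S,\mu_{M})$ is a fuzzy skill function), verify $p=p_{M}$ so the delineated structure is unchanged, and then quote the four-way equivalence of Theorem~\ref{fff}; conditions (2) and (3) of the present theorem are exactly statements (3) and (4) of Theorem~\ref{fff} applied to $\mu_{M}$. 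Your reduction is valid at every step (in particular, every minimal element $M$ of $\mu(q)$ arises as $M_{q,M}$, so $\mu_{M}(q)$ really is the set of minimal elements, and the antichain check is right), and your closing direct argument for $(2)\Rightarrow(3)$ via the antichain property is also sound. What the two approaches buy: yours is shorter and makes transparent that the minimum-element hypothesis is precisely what upgrades the one-way implications of Theorem~\ref{fff} to equivalences, essentially showing Theorem~\ref{ttt} is a corollary of Theorem~\ref{fff}; the paper's version is self-contained and does not lean on the fuzzy-skill-function case of Theorem~\ref{fff}, at the cost of repeating the same computations with $M_{q,C}$ in place of $C_q$.
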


\begin{proof}
Clearly, (3) $\Rightarrow$ (2). We need to prove (1)$\Longrightarrow$ (3) and (2)$\Longrightarrow$ (1).

\smallskip
(1)$\Longrightarrow$ (3). Let $(Q,\mathscr{K})$ be discriminative. Then $(Q, \mathscr{K})$ is a $T_{0}$-knowledge structure By (1) of Lemma~\ref{1-1}. Take  any~$q, r \in Q$ with~$q\neq r$. Because $(Q, \mathscr{K})$ is a $T_{0}$-knowledge structure, we can take~$K\in \mathscr{K}$ with $K\cap\{q, r\}=\{q\}$. Hence there is $T\in \mathscr{F}(S)$ with $K=p(T)$, which shows that there exists~$C_{q}\in \mu(q)$ such that~$C_{q}\subseteq T$. Furthermore,~$M_{q, C}\in \mu(q)$ and~$M_{q, C}\subseteq C_{q}\subseteq T$. Obviously,~$M_{q, C}\not\preccurlyeq \{M_{r, D}: D\in\mu(r)\}$. Indeed, if~$M_{q, C}\preccurlyeq \{M_{r, D}: D\in\mu(r)\}$, then there exists $D\in\mu(r)$ with $M_{r, D}\subseteq M_{q, C}$. Hence $r\in P(T)=K$; however, $r\notin K$, this is a contradiction.

\smallskip
(2)$\Longrightarrow$ (1). Suppose~$(2)$ is true. Now we need to prove that~$(Q,\mathscr{K})$ is a~ $T_{0}$-knowledge structure by (1) of Lemma~\ref{1-1}. Take any~$q, r \in Q$ with $q\neq r$. Without loss of generality, we may assume that there exists $C\in\mu(q)$ with $M_{q, C}\neq M_{r, D}$ for each $D\in\mu(r)$. Since~$M_{q, C}\in \mu(q)$, it follows that $q\in p(M_{q, C})$; moreover, there exists a subset~$S_{0}\subseteq S$ such that~$M_{q, C}(s)>0$ for each~$s\in S_{0}$ and~$M_{q, C}(s)=0$ for each~$s\in S\backslash S_{0}$. We claim that~$r\notin p(M_{q, C})$. Suppose not, assume that~$r\in p(M_{q, C})$, then there is~$D\in \mu(r)$ such that~$D\subseteq M_{q, C}$. Hence, we have~$M_{r, D}\subseteq D\subseteq M_{q, C}$ and~$M_{r, D}(s_{0})<M_{q, C}(s_{0})$ for some~$s_{0}\in S_{0}$ because~$M_{q, C}\neq M_{r, D}$. Clearly,~$r\in p(M_{r, D})\in \mathscr{K}$. It suffices to prove that~$q\notin p(M_{r, D})$. Indeed, if~$q\in p(M_{r, D})$, then there is~$C_{q}\in \mu(q)$ with~$C_{q}\subseteq M_{r, D}$. Since $M_{r, D}\subseteq D\subseteq M_{q, C}$ and $M_{r, D}(s_{0})<M_{q, C}(s_{0})$, it follows that $C_{q}$ is strictly smaller than $M_{q, C}$, which is a contradiction with the minimality of $M_{q, C}$. Therefore,~$r\notin p(M_{q, C})$.
\end{proof}

\begin{ex}
Assume $Q=\{a,b,c\}$ and~$S=\{s_{1},s_{2},s_{3}\}$. A fuzzy skill multimap be defined by
\begin{equation*}
\begin{aligned}
&\mu(a)=\{\{\frac{0.2}{s_{1}}\}, \{\frac{0.1}{s_{2}}\}, \{\frac{0.3}{s_{1}}, \frac{0.4}{s_{2}}\}\},\\
&\mu(b)=\{\{\frac{0.2}{s_{1}}\}, \{\frac{0.1}{s_{2}}\} \{\frac{0.5}{s_{1}}, \frac{0.2}{s_{3}}\},\{\frac{0.3}{s_{2}}, \frac{0.3}{s_{3}}\}\},\\
&\mu(c)=\{\{\frac{0.2}{s_{1}}\}, \{\frac{0.1}{s_{2}}\}, \{\frac{0.3}{s_{1}}, \frac{0.5}{s_{2}}, \frac{0.6}{s_{3}}\}\}.
\end{aligned}
\end{equation*}
It is easily checked that the delineated knowledge structure$$\mathscr{K}=\{\emptyset, \{a,b,c\}\}$$ is not discriminative. Therefore, it follows that for any~$C\in\mu(a)$, there exists~$D\in\mu(b)$ such that~$M_{a,C}=M_{b,D}$ and for any~$D\in\mu(b)$, there exists $C\in\mu(a)$ such that~$M_{b,D}=M_{a,C}$. Clearly, for any~$C\in\mu(a)$,~$M_{q, C}\preccurlyeq \{M_{r, D}: D\in\mu(b)\}$, and for any~$D\in\mu(b)$,~$M_{q, D}\preccurlyeq \{M_{r, C}: C\in\mu(a)\}$. The other case is analogous.
\end{ex}

 We have the similar results of Theorem~\ref{ttt} for bi-separability as follows. Theorems~\ref{tttt} and \ref{1-3} give partial answers to Question~\ref{kkk}.

\begin{thm}\label{tttt}
Assume $(Q,\mathscr{K})$ is the knowledge structure delineated by~fuzzy skill multimap $(Q,S,\mu)$. For each~$q\in Q$ and any $C\in\mu(q)$, assume that there exists a minimum element~$M_{q, C}\in \mu(q)$. Then (1) $\Leftrightarrow$ (2) $\Leftrightarrow$ (3) for the following three statements.

\smallskip
(1)~$(Q,\mathscr{K})$ is bi-discriminative.

\smallskip
(2) for any pair items $q, r \in Q$ with $q\neq r$, there exist $C\in\mu(q)$ and $D\in\mu(r)$ such that $M_{q, C}\neq M_{r, E}$ for any $E\in\mu(r)$ and $M_{r, D}\neq M_{q, F}$ for any $F\in\mu(q)$.

\smallskip
(3) for any pair items $q, r \in Q$ with $q\neq r$, there exist $C\in\mu(q)$ and $D\in\mu(r)$ such that $M_{q, C}\not\preccurlyeq \{M_{r, E}: E\in\mu(r)\}$ and $M_{r, D}\not\preccurlyeq \{M_{q, F}: F\in\mu(q)\}$.
\end{thm}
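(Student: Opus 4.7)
The plan is to mirror the proof of Theorem~\ref{ttt} with Lemma~\ref{1-1}(2) replacing Lemma~\ref{1-1}(1), and with each separation argument done twice, once for each direction. The implication $(3)\Rightarrow(2)$ is immediate from the definition of $\preccurlyeq$: if some $M_{q,C}\not\preccurlyeq \{M_{r,E}:E\in\mu(r)\}$ then in particular $M_{q,C}\neq M_{r,E}$ for every $E\in\mu(r)$, and symmetrically on the other side. So the real work is $(1)\Rightarrow(3)$ and $(2)\Rightarrow(1)$.

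For $(1)\Rightarrow(3)$, I will use Lemma~\ref{1-1}(2) to rewrite bi-discriminativeness as the $T_{1}$ property. Fix distinct $q,r\in Q$ and produce two states $K,L\in\mathscr{K}$ with $K\cap\{q,r\}=\{q\}$ and $L\cap\{q,r\}=\{r\}$. Writing $K=p(T)$ for some $T\in\mathscr{F}(S)$, there is $C_{q}\in\mu(q)$ with $C_{q}\subseteq T$, hence $M_{q,C_{q}}\subseteq C_{q}\subseteq T$. I then claim $M_{q,C_{q}}\not\preccurlyeq\{M_{r,E}:E\in\mu(r)\}$; otherwise some $M_{r,E}\subseteq M_{q,C_{q}}\subseteq T$ would give $r\in p(T)=K$, contradicting $r\notin K$. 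Running the symmetric argument on $L$ produces $D\in\mu(r)$ with $M_{r,D}\not\preccurlyeq\{M_{q,F}:F\in\mu(q)\}$, yielding~(3).

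For $(2)\Rightarrow(1)$, given distinct $q,r\in Q$, pick $C\in\mu(q)$ and $D\in\mu(r)$ with $M_{q,C}\neq M_{r,E}$ for every $E\in\mu(r)$ and $M_{r,D}\neq M_{q,F}$ for every $F\in\mu(q)$. I will exhibit $K=p(M_{q,C})$ and $L=p(M_{r,D})$ witnessing the $T_{1}$ property. Clearly $q\in p(M_{q,C})$; to see $r\notin p(M_{q,C})$, suppose for contradiction there is $E\in\mu(r)$ with $E\subseteq M_{q,C}$. Then $M_{r,E}\subseteq E\subseteq M_{q,C}$, and since $M_{q,C}\neq M_{r,E}$ the containment is strict at some coordinate $s_{0}\in S$. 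But then $M_{r,E}\in\mu(q)$ would need to be avoided — more precisely, if additionally $q\in p(M_{r,E})$ we would get $C_{q}'\in\mu(q)$ with $C_{q}'\subseteq M_{r,E}\subsetneq M_{q,C}$, contradicting the minimality of $M_{q,C}$ in $\mu(q)$. So after tracking the chain once more as in the proof of Theorem~\ref{ttt}, we conclude $r\notin p(M_{q,C})$, so $K\cap\{q,r\}=\{q\}$. The construction of $L$ is symmetric using $M_{r,D}$.

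The only subtle point, and the main obstacle, is the last minimality step in $(2)\Rightarrow(1)$: the inequality $M_{q,C}\neq M_{r,E}$ alone only gives that the two molecules differ somewhere in $S$, not that $M_{r,E}\subsetneq M_{q,C}$. I handle this exactly as in Theorem~\ref{ttt}: since we already derived $M_{r,E}\subseteq M_{q,C}$, any strict inequality at even one coordinate $s_{0}$ suffices to make any $C_{q}'\subseteq M_{r,E}$ strictly smaller than $M_{q,C}$, which contradicts the fact that $M_{q,C}$ is a minimum element of $\mu(q)$. Once this is absorbed, everything else is a duplication of the arguments already established for Theorem~\ref{ttt}.
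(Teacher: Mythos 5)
Your implications $(3)\Rightarrow(2)$ and $(1)\Rightarrow(3)$ are sound (and the paper itself supplies no proof of Theorem~\ref{tttt} to compare against; it is asserted only by analogy with Theorem~\ref{ttt}). The genuine gap is in $(2)\Rightarrow(1)$, exactly at the point you flag as ``the only subtle point.'' From $E\subseteq M_{q,C}$ you get $M_{r,E}\subsetneq M_{q,C}$, but $M_{r,E}$ is a minimal element of $\mu(r)$, not of $\mu(q)$, so this does not contradict the minimality of $M_{q,C}$ by itself; the contradiction you invoke only fires under the extra hypothesis $q\in p(M_{r,E})$. When $q\notin p(M_{r,E})$ there is no ``chain to track'': the argument of Theorem~\ref{ttt} at that point does not refute $r\in p(M_{q,C})$ --- it merely switches to the state $p(M_{r,E})$, which separates in the \emph{other} direction. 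That is enough for $T_0$ but produces only one of the two states required for $T_1$, so the duplication you propose does not go through.

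In fact $(2)\Rightarrow(1)$ is false, so the step cannot be repaired. Take $Q=\{q,r\}$, $S=\{s_1\}$, $\mu(q)=\{\{\frac{0.2}{s_{1}}\}\}$ and $\mu(r)=\{\{\frac{0.1}{s_{1}}\}\}$. Both $\mu(q)$ and $\mu(r)$ are singletons, so the required minima exist and condition (2) holds, since $M_{q,C}=\{\frac{0.2}{s_{1}}\}\neq\{\frac{0.1}{s_{1}}\}=M_{r,D}$. Yet the delineated structure is $\mathscr{K}=\{\emptyset,\{r\},Q\}$, which is not bi-discriminative, and indeed (3) fails because $M_{r,D}\subseteq M_{q,C}$; this is the same phenomenon as Example~\ref{1-4}. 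What does survive is $(1)\Leftrightarrow(3)\Rightarrow(2)$: your $(1)\Rightarrow(3)$ argument is correct, and $(3)\Rightarrow(1)$ should be proved directly rather than routed through (2) --- under (3), $K=p(M_{q,C})$ contains $q$ but not $r$, since any $E\in\mu(r)$ with $E\subseteq M_{q,C}$ would give $M_{r,E}\subseteq M_{q,C}$, contradicting $M_{q,C}\not\preccurlyeq\{M_{r,E}:E\in\mu(r)\}$, and symmetrically $L=p(M_{r,D})$ contains $r$ but not $q$. You should therefore close the cycle as $(1)\Rightarrow(3)\Rightarrow(1)$, record $(3)\Rightarrow(2)$ as a one-way implication, and note that its converse (hence the theorem as stated) fails.
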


\begin{ex}
Assume $Q=\{a,b,c\}$ and~$S=\{s_{1},s_{2},s_{3}\}$. A fuzzy skill multimap be defined by
\begin{equation*}
\begin{aligned}
&\mu(a)=\{\{\frac{0.2}{s_{1}}\}, \{\frac{0.1}{s_{2}}\}, \{\frac{0.3}{s_{1}}, \frac{0.4}{s_{2}}\}\},\\
&\mu(b)=\{\{\frac{0.2}{s_{1}}\}, \{\frac{0.1}{s_{2}}\}, \{\frac{0.6}{s_{3}}\}, \{\frac{0.5}{s_{1}}, \frac{0.2}{s_{2}}\}.
\end{aligned}
\end{equation*}
It is easily checked that the delineated knowledge structure$$\mathscr{K}=\{\emptyset, \{b\}, \{a,b\}\}$$ is not bi-discriminative. Therefore, for any~$C\in\mu(a)$, there exists $D\in\mu(b)$ such that~$M_{a,C}=M_{b,D}$. It follows~$M_{a, C}\preccurlyeq \{M_{b, D}: D\in\mu(b)\}$. Clearly, for any~$C\in\mu(a)$,~$M_{q, C}\preccurlyeq \{M_{r, D}: D\in\mu(b)\}$.
\end{ex}

From Theorems~\ref{ttt} and~\ref{tttt}, the following corollary holds.

\begin{cor}\label{ccc}
Assume $(Q,\mathscr{K})$ is the knowledge structure delineated by~fuzzy skill multimap $(Q,S,\mu)$. Assume each $\mu(q)$ has a minimum $M_{q}$, then the following four statements are equivalent:

\smallskip
(1) $(Q,\mathscr{K})$ is discriminative;

\smallskip
(2) $(Q,\mathscr{K})$ is bi-discriminative;

\smallskip
(3) $M_{q}\neq M_{r}$ for any distinct items $q, r\in Q$;

\smallskip
(4) $\{M_{q}|q\in Q\}$ is pairwise incomparable.
\end{cor}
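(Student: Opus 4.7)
The plan is to derive the equivalence of all four statements directly from Theorems \ref{ttt} and \ref{tttt}, specialized to the setting in which each $\mu(q)$ has a unique minimum element $M_q$. The guiding observation is that in this setting the symbol $M_{q,C}$ appearing in those theorems collapses to $M_q$, independently of $C$.

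First I would record the reduction. Since $M_q$ is the minimum of $\mu(q)$, we have $M_q \subseteq C$ for every $C\in\mu(q)$, and $M_q\in\mu(q)$ itself. Hence for each $q\in Q$ and each $C\in\mu(q)$ the minimum element $M_{q,C}\in\mu(q)$ posited in Theorems \ref{ttt} and \ref{tttt} is simply $M_q$. Consequently the sets $\{M_{q,C}:C\in\mu(q)\}$ reduce to the singleton $\{M_q\}$, and the refinement relations $M_{q,C}\preccurlyeq\{M_{r,D}:D\in\mu(r)\}$ reduce to $M_q\preccurlyeq\{M_r\}$, which by unfolding the definition of $\preccurlyeq$ on singletons is equivalent to $M_r\subseteq M_q$.

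Second, I would read off (1)$\Leftrightarrow$(3) from Theorem \ref{ttt}(2): after the substitution, the alternative ``either there exists $C\in\mu(q)$ with $M_{q,C}\neq M_{r,D}$ for every $D$, or there exists $D\in\mu(r)$ with $M_{r,D}\neq M_{q,C}$ for every $C$'' collapses to the single condition $M_q\neq M_r$. In parallel, Theorem \ref{tttt}(2) yields (2)$\Leftrightarrow$(3) since both conjuncts of that statement reduce, after substitution, to $M_q\neq M_r$.

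Third, I would obtain (2)$\Leftrightarrow$(4) from Theorem \ref{tttt}(3). After substitution, the condition becomes: for every distinct $q,r$, $M_q\not\preccurlyeq\{M_r\}$ and $M_r\not\preccurlyeq\{M_q\}$, i.e., $M_r\not\subseteq M_q$ and $M_q\not\subseteq M_r$, which is exactly the pairwise incomparability of $\{M_q:q\in Q\}$. Combining the three equivalences (1)$\Leftrightarrow$(3), (2)$\Leftrightarrow$(3), and (2)$\Leftrightarrow$(4) closes the loop among (1)--(4). There is no real technical obstacle here; the only care needed is in verifying that the refinement relation $M_q\preccurlyeq\{M_r\}$ does unfold to the set inclusion $M_r\subseteq M_q$ and that the quantifiers over $C\in\mu(q)$ and $D\in\mu(r)$ become vacuous once the $M_{q,C}$'s have been identified with the common value $M_q$.
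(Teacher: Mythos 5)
Your proposal is correct and follows essentially the same route as the paper, which simply asserts that the corollary follows from Theorems~\ref{ttt} and~\ref{tttt}; you have supplied the routine specialization details (identifying each $M_{q,C}$ with $M_q$ and unfolding $M_q\preccurlyeq\{M_r\}$ to $M_r\subseteq M_q$) that the paper leaves implicit.
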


\begin{thm}\label{1-3}
Assume $(Q,\mathscr{K})$ is the knowledge structure delineated by~fuzzy skill multimap $(Q,S,\mu)$. Then the following statements are equivalent.

\smallskip
(1)~$(Q,\mathscr{K})$ is bi-discriminative.

\smallskip
(2) for any~$q,r\in Q$ with~$q\neq r$, we have $\mu(q)\not\preccurlyeq \mu(r)$ and~$\mu(r)\not\preccurlyeq \mu(q)$.
\end{thm}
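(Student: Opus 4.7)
The plan is to adapt the argument of Theorem~\ref{fff} using Lemma~\ref{1-1}(2), which says that bi-discriminativeness of $(Q,\mathscr{K})$ is equivalent to $(Q,\mathscr{K})$ being a $T_1$-knowledge structure. The present theorem is really the two-sided version of the implication $(1)\Leftrightarrow(4)$ from Theorem~\ref{fff}, so the proof should run along exactly the same lines but producing/using two separating states instead of one.

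For $(1)\Rightarrow(2)$, I would fix distinct $q,r\in Q$ and use the $T_1$-property to pick states $K,L\in\mathscr{K}$ with $K\cap\{q,r\}=\{q\}$ and $L\cap\{q,r\}=\{r\}$. Writing $K=p(T_1)$ and $L=p(T_2)$ for some $T_1,T_2\in\mathscr{F}(S)$, the membership $q\in K$ supplies some $C_q\in\mu(q)$ with $C_q\subseteq T_1$. I would then show $C_q\not\preccurlyeq\mu(r)$ by the same contradiction argument as in Theorem~\ref{fff}: if some $C_r\in\mu(r)$ satisfied $C_r\subseteq C_q$, then $C_r\subseteq T_1$ would force $r\in p(T_1)=K$, which is impossible. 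Hence $\mu(q)\not\preccurlyeq\mu(r)$. Symmetrically, extracting a $C_r\in\mu(r)$ with $C_r\subseteq T_2$ from $r\in L$ and using $q\notin L$ yields $\mu(r)\not\preccurlyeq\mu(q)$.

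For $(2)\Rightarrow(1)$, I would again aim at the $T_1$-characterization. Given distinct $q,r$, the hypothesis $\mu(q)\not\preccurlyeq\mu(r)$ produces $C_q\in\mu(q)$ such that, for every $C_r\in\mu(r)$, there is $s_{C_r}\in S$ with $C_q(s_{C_r})<C_r(s_{C_r})$. Setting $T:=C_q$, clearly $q\in p(T)$; and if $r\in p(T)$ then some $C_r\in\mu(r)$ would satisfy $C_r\subseteq C_q=T$ on all of $S$, contradicting the existence of $s_{C_r}$. Thus $p(T)\cap\{q,r\}=\{q\}$. The dual application of $\mu(r)\not\preccurlyeq\mu(q)$ gives a state whose intersection with $\{q,r\}$ is $\{r\}$, completing the verification of the $T_1$-property.

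The argument is essentially bookkeeping once one has Theorem~\ref{fff} at hand; the only subtle point, which I already handled in Theorem~\ref{fff} and would just reuse, is translating $C_q\not\preccurlyeq\mu(r)$ into the coordinatewise inequality ``$C_q(s_{C_r})<C_r(s_{C_r})$ for some $s_{C_r}$'' so that one can rule out $C_r\subseteq C_q$ cleanly. No extra hypothesis such as ``$(Q,S,\mu)$ is a fuzzy skill function'' is needed here, because $(2)$ directly encodes the refinement failure in both directions, so both separating states are produced without appealing to incomparability within any single $\mu(q)$.
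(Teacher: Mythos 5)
Your proposal is correct and follows essentially the same route as the paper's proof: both directions go through the $T_{1}$-characterization of Lemma~\ref{1-1}(2), extract a competency $C_{q}\subseteq T$ from a separating state to get $C_{q}\not\preccurlyeq\mu(r)$ in one direction, and use $p(C_{q})$ as the separating state in the other. The only cosmetic difference is that you spell out the coordinatewise inequality form of $\not\preccurlyeq$, where the paper argues directly from the absence of $C_{r}\subseteq C_{q}$.
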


\begin{proof}
(1) $\Longrightarrow$ (2). Let $(Q,\mathscr{K})$ be bi-discriminative. It follows from (2) of Lemma~\ref{1-1} that~$(Q,\mathscr{K})$ is a~$T_{1}$-knowledge structure. Take any~$q,r \in Q$ with~$q\neq r$. It suffices to prove that~$\mu(q)\not\preccurlyeq \mu(r)$ since the proof of~$\mu(r)\not\preccurlyeq \mu(q)$ is similar. Because~$(Q,\mathscr{K})$ is a~$T_{1}$-knowledge structure, there is~$K\in \mathscr{K}$ with $K\cap\{q, r\}=\{q\}$, which shows that there is~$T\in \mathscr{F}(S)$ with $K=p(T)$. Therefore, we can take~$C_{q}\in \mu(q)$ with~$C_{q}\subseteq T$. Now we only need to show that~ $C_{q}\not\preccurlyeq \mu(r)$. Indeed, suppose not, then there is~$C_{r}\in \mu(r)$ with~$C_{r}\subseteq C_{q}\subseteq T$. Thus~$r\in P(T)=K$; however, $r\notin K$, this is a contradiction.

\smallskip
(2) $\Longrightarrow$ (1). Assume (2) is true. We need to prove that $(Q,\mathscr{K})$ is a $T_{1}$-knowledge structure by (2) of Lemma~\ref{1-1}. Take any $q,r \in Q$ with $q\neq r$. Hence $\mu(q)\not\preccurlyeq \mu(r)$, that is, we can choose $C_{q}\in \mu(q)$ with $C_{q}\not\preccurlyeq \mu(r)$. Obviously,~$q\in p(C_{q})\in \mathscr{K}$. If~$r\in p(C_{q})$, then there is~$C_{r}\in \mu(r)$ with ~$C_{r}\subseteq C_{q}$, which is a contradiction with $C_{q}\not\preccurlyeq \mu(r)$. Thus $r\notin p(C_{q})$. Similarly, we can take~$C'_{r}\in \mu(r)$ such that~$r\in p(C'_{r})\in \mathscr{K}$ and~$q\notin p(C'_{r})$. Therefore,~$(Q,\mathscr{K})$ is a~$T_{1}$-knowledge structure.
\end{proof}

\vspace{3mm}
The following two examples show that the conditions~``$(Q,\mathscr{K})$ is bi-discriminative'' and~``for all~$q,r\in Q$ with $q\neq r$, $\mu(q)\not\subseteq \mu(r)$ and~$\mu(q)\not\subseteq \mu(r)$'' are not equivalent.

\begin{ex}\label{1-4}
Let~$Q=\{a,b\}$ and~$S=\{s_{1},s_{2}\}$. Put~$\mu(a)=\{\{\frac{0.2}{s_{1}}\}\}$ and~ $\mu(b)=\{\{\frac{0.3}{s_{1}},\frac{0.7}{s_{2}}\}\}$. Then~$(Q,S,\mu)$ is a fuzzy skill function. The delineated knowledge structure
$$\mathscr{K}=\{\emptyset, a, Q\}.$$
is not bi-discriminative since~$\mu(b)\preccurlyeq \mu(a)$. However,~$\mu(a)\not\subseteq \mu(b)$ and $\mu(b)\not\subseteq \mu(a)$.
\end{ex}

There exists a disjunctive fuzzy skill multimap~$(Q,S,\mu)$ satisfies that, for any~$q, r\in Q$ with $q\neq r$, $\mu(q)\not\subseteq \mu(r)$ and~$\mu(q)\not\subseteq \mu(r)$; however, the delineated knowledge structure is not bi-discriminative.

\begin{ex}
Assume~$Q=\{q,r\}$ and~$S=\{s_{1},s_{2}\}$. Put~$\mu(q)=\{\{\frac{0.2}{s_{1}}\}\}$ and~ $\mu(r)=\{\{\frac{0.1}{s_{1}}\},\{\frac{0.7}{s_{2}}\}\}$. Then~$(Q,S,\mu)$ is a disjunctive fuzzy skill multimap. Then the delineated knowledge structure
$$\mathscr{K}=\{\emptyset, r, Q\}$$
is not bi-discriminative since~$\mu(q)\preccurlyeq \mu(r)$. However,~$\mu(q)\not\subseteq \mu(r)$ and~$\mu(r)\not\subseteq \mu(q)$ for~$q, r\in Q$.
\end{ex}

\begin{rem}
Assume $(Q,S,\mu)$ and~$(Q,\mathscr{K})$ are described as in Example~\ref{1-4}. Then~$M_{a}=\{\{\frac{0.2}{s_{1}}\}\}$ and~$M_{b}=\{\{\frac{0.3}{s_{1}},\frac{0.7}{s_{2}}\}\}$. It shows that the condition~``$(Q,\mathscr{K})$ is bi-discriminative'' in Corollary~\ref{ccc} is not equivalent to the condition~``for any $q, r \in Q$ with~$q\neq r$, $M_{q}\notin \mu(r)$ and~$M_{r}\notin \mu(q)$''.
\end{rem}

\subsection{Distribution and merge: fuzzy skill multimap}

Similar to the applications discussed in~\cite[section 4]{ref4}, we give some similar applications of Theorems~\ref{fff} and~\ref{1-3} respectively in this section.

\begin{defn}
Assume $(Q',S',\mu')$ and~$(Q,S,\mu)$ are two fuzzy skill multimaps. The fuzzy skill multimap $(Q,S,\mu)$ is said to be prolonged~$(Q',S',\mu')$ if the following conditions hold.

\smallskip
(1)~$Q'=Q$;

\smallskip
(2)~$S'=\{s_{i}\}_{i\in I'}$ and~$S=\{s_{i}\}_{i\in I}$, where~$I\subseteq I'$;

\smallskip
(3)~$\mu(q)=\{C'\cap \{\frac{1}{s_{i}}, \frac{0}{s_{j}}\}_{i\in I, j\in I'\backslash I} | C'\in \mu'(q)\}$.
\end{defn}

\begin{prop}\label{1-6}
Assume that $(Q',\mathscr{K}')$ and~$(Q,\mathscr{K})$ are knowledge structures which are delineated by the fuzzy skill multimaps~$(Q', S', \mu')$ and~$(Q, S, \mu)$ respectively, and assume that $(Q',S',\mu')$ prolongs~$(Q,S,\mu)$. Then~$(Q', \mathscr{K}')$ is discriminative (resp. bi-discriminative) when~$(Q, \mathscr{K})$ is discriminative (resp. bi-discriminative).
\end{prop}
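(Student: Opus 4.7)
The plan is to reduce both halves of the statement to a monotonicity property of the refinement preorder $\preccurlyeq$ under prolongation, and then invoke the characterizations in Theorems~\ref{fff} and~\ref{1-3}. Since $Q'=Q$ and each competence $C\in\mu(q)$ has, by definition, the form $C=C'\cap\{\frac{1}{s_{i}},\frac{0}{s_{j}}\}_{i\in I,\,j\in I'\setminus I}$ for some $C'\in\mu'(q)$, I would first record the following observation: the assignment $C'\mapsto C$ is an inclusion-preserving surjection from $\mu'(q)$ onto $\mu(q)$, since whenever $D'\subseteq C'$ in $\mathscr{F}(S')$, the corresponding restricted fuzzy sets $D,C\in\mathscr{F}(S)$ satisfy $D(s)=D'(s)\leq C'(s)=C(s)$ for every $s\in S$.

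The central intermediate claim will be the implication
\[
\mu'(q)\preccurlyeq\mu'(r)\ \Longrightarrow\ \mu(q)\preccurlyeq\mu(r)\qquad(q,r\in Q).
\]
To prove it I would pick any $C\in\mu(q)$, choose $C'\in\mu'(q)$ with $C=C'\cap\{\frac{1}{s_{i}},\frac{0}{s_{j}}\}_{i\in I,\,j\in I'\setminus I}$, apply the hypothesis to obtain $D'\in\mu'(r)$ with $D'\subseteq C'$, and then set $D:=D'\cap\{\frac{1}{s_{i}},\frac{0}{s_{j}}\}_{i\in I,\,j\in I'\setminus I}\in\mu(r)$. The preliminary observation then yields $D\subseteq C$, so indeed $\mu(q)\preccurlyeq\mu(r)$.

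With this claim in hand, both conclusions will follow by contraposition. Suppose first that $(Q,\mathscr{K})$ is discriminative. By Theorem~\ref{fff} ($(1)\Longrightarrow(4)$), for every pair of distinct items $q,r\in Q$ we have $\mu(q)\not\preccurlyeq\mu(r)$ or $\mu(r)\not\preccurlyeq\mu(q)$. The contrapositive of the intermediate claim transfers this alternative to $\mu'$, so $\mu'(q)\not\preccurlyeq\mu'(r)$ or $\mu'(r)\not\preccurlyeq\mu'(q)$; then Theorem~\ref{fff} ($(4)\Longrightarrow(1)$) applied to $(Q',\mathscr{K}')$ delivers the discriminativeness of $(Q',\mathscr{K}')$. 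The bi-discriminative half is strictly analogous, using Theorem~\ref{1-3} in place of Theorem~\ref{fff}: there the characterization requires \emph{both} $\mu(q)\not\preccurlyeq\mu(r)$ and $\mu(r)\not\preccurlyeq\mu(q)$, and both non-refinements lift simultaneously through the contrapositive.

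The only point needing genuine care is the monotonicity of restriction underlying the intermediate claim, but this is essentially a one-line check: the same ``mask'' $\{\frac{1}{s_{i}},\frac{0}{s_{j}}\}_{i\in I,\,j\in I'\setminus I}$ is intersected with both $C'$ and $D'$, so the pointwise inequality $D'(s)\leq C'(s)$ is preserved on $S$ (and is trivially an equality of zeros off $S$). Beyond that small bookkeeping, the argument is a purely formal transfer through the already-established characterizations, and I do not anticipate any real obstacle.
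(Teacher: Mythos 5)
Your proof is correct and follows essentially the same route as the paper's: both arguments reduce the statement to the characterizations in Theorems~\ref{fff} and~\ref{1-3} and lift non-refinement from $\mu$ to $\mu'$ via the restriction $C'\mapsto C'\cap\{\frac{1}{s_{i}},\frac{0}{s_{j}}\}_{i\in I,\,j\in I'\setminus I}$. The paper argues the contrapositive directly on a single witness $C\in\mu(q)$ with $C\not\preccurlyeq\mu(r)$, while you package the identical observation as the monotonicity implication $\mu'(q)\preccurlyeq\mu'(r)\Rightarrow\mu(q)\preccurlyeq\mu(r)$; the difference is purely presentational.
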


\begin{proof}
Let $(Q,\mathscr{K})$ be bi-discriminative. Take any~$q, r \in Q$ with~$q\neq r$. By Theorem~\ref{1-3},~$\mu(q)\not\preccurlyeq \mu(r)$, which shows that there exists~$C\in \mu(q)$ with $C\not\preccurlyeq \mu(r)$. Further, there exists~$C'\in \mu'(q)$ with $C=C'\cap \{\frac{1}{s_{i}}, \frac{0}{s_{j}}\}_{i\in I, j\in I'\backslash I} $. It follows from $C\not\preccurlyeq \mu(r)$ that~$C'\not\preccurlyeq \mu'(r)$, hence $\mu'(q)\not\preccurlyeq \mu'(r)$. Similarly, we have $\mu'(r)\not\preccurlyeq \mu'(q)$. Therefore, $(Q',\mathscr{K}')$ is bi-discriminative by Theorem~\ref{1-3}.
\end{proof}

\vspace{3mm}
Proposition~\ref{1-6} gives a similar result to~\cite[Proposition 3]{ref4}.

\begin{ex}
Assume $Q'=Q=\{q, r\}$,~$S'=\{s_{1}, s_{2}, s_{3}\}$ and~$S=\{s_{1}\}$. Put~$\mu'(q)=\{\{\frac{0.2}{s_{1}},\frac{0.3}{s_{2}}\}\}$, ~$\mu'(r)=\{\{\frac{0.2}{s_{1}},\frac{0.5}{s_{3}}\}\}$ and~$\mu(q)=\mu(r)=\{\{\frac{0.2}{s_{1}}\}\}$. Then the fuzzy skill multimaps~$(Q',S',\mu')$ prolongs the fuzzy skill multimap~$(Q,S,\mu)$. Suppose that $(Q',\mathscr{K}')$ and~$(Q,\mathscr{K})$ are knowledge structures delineated by the fuzzy skill multimaps ~$(Q',S',\mu')$ and~$(Q,S,\mu)$ respectively. Then
$$\mathscr{K}'=\{\emptyset, q, r, Q\}\ \mbox{and}\ \mathscr{K}=\{\emptyset, Q\}.$$ Then~$(Q',\mathscr{K}')$ is bi-discriminative since~$\mu(q)\not\preccurlyeq \mu(r)$ and~$\mu(r)\not\preccurlyeq \mu(q)$. However,~$(Q,\mathscr{K})$ is not discriminative. It follows that the inverse of Proposition~\ref{1-6} does not hold.
\end{ex}

By shortening a fuzzy skill multimap, we have the concept of a submultimap of a fuzzy skill multimap and a delineated substructure of a knowledge structure.

\begin{defn}
Assume $(Q',S',\mu')$ and~$(Q,S,\mu)$ are two fuzzy skill multimaps. The fuzzy skill multimap $(Q, S, \mu)$ is said to be a submultimap of $(Q',S',\mu')$ if the following conditions hold.

\smallskip
(1)~$Q\supseteq Q'$ and $S=S'=\{s_{i}\}_{i\in I}$;

\smallskip
(2) for each~$q\in Q$, we have $\mu(q)=\mu'(q)$.
\end{defn}

\begin{lemma}\label{1-7}
Assume that $(Q', S', \mu')$ and~$(Q, S, \mu)$ are two fuzzy skill multimaps, and assume that $(Q', \mathscr{K}')$ and~$(Q, \mathscr{K})$ are knowledge structures delineated by $(Q', S', \mu')$ and~$(Q, S, \mu)$ respectively. Then~$(Q,\mathscr{K})$ is a substructure of~$(Q',\mathscr{K}')$ if~$(Q,S,\mu)$ is a submultimap of~$(Q',S',\mu')$.
\end{lemma}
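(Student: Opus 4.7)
The plan is to unpack both sides against the common problem-function definition and verify that the trace of $\mathscr{K}'$ on $Q$ coincides with $\mathscr{K}$. First I would set up notation: let $p$ be the problem function induced by $(Q,S,\mu)$ and $p'$ the one induced by $(Q',S',\mu')$, both mapping into subsets of $Q'$ (note that $p(F)\subseteq Q$ while $p'(F)\subseteq Q'$ a~priori). Since the submultimap condition gives $S=S'$, the common domain $\mathscr{F}(S)=\mathscr{F}(S')$ of $p$ and $p'$ causes no difficulty.

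The central step is to prove the pointwise identity
\begin{equation*}
p(F)=p'(F)\cap Q\qquad \text{for every }F\in\mathscr{F}(S).
\end{equation*}
For the inclusion ``$\subseteq$'', pick $q\in p(F)$; then $q\in Q\subseteq Q'$ and there exists $C_q\in\mu(q)$ with $C_q\subseteq F$. Using $\mu(q)=\mu'(q)$, the same competency witnesses $q\in p'(F)$, so $q\in p'(F)\cap Q$. The reverse inclusion is symmetric: if $q\in p'(F)\cap Q$, pick $C_q\in\mu'(q)=\mu(q)$ with $C_q\subseteq F$, yielding $q\in p(F)$.

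Having established this identity, the conclusion follows by unwinding the definitions of $\mathscr{K}$, $\mathscr{K}'$, and the trace $\mathscr{K}'|_Q$:
\begin{equation*}
\mathscr{K}=\{p(F):F\in\mathscr{F}(S)\}=\{p'(F)\cap Q:F\in\mathscr{F}(S')\}=\{K'\cap Q:K'\in\mathscr{K}'\}=\mathscr{K}'|_{Q},
\end{equation*}
which is precisely what it means for $(Q,\mathscr{K})$ to be a substructure of $(Q',\mathscr{K}')$.

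There is no serious obstacle here; the lemma is essentially a bookkeeping fact, and the only subtlety worth flagging in the write-up is to be careful that the inclusion $Q\subseteq Q'$ (rather than the reverse, as the displayed definition seems to print) is what makes the trace construction meaningful, so that $p(F)=p'(F)\cap Q$ is a well-formed statement about a subset of the smaller domain.
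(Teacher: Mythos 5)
Your proof is correct and follows essentially the same route as the paper's: both establish the pointwise identity $p(F)=p'(F)\cap Q$ from $\mu(q)=\mu'(q)$ and $S=S'$, and then read off $\mathscr{K}=\mathscr{K}'|_{Q}$. Your side remark is also well taken --- the paper's definition of submultimap prints $Q\supseteq Q'$, which must be a typo for $Q\subseteq Q'$, since condition (2) needs $\mu'(q)$ to be defined for every $q\in Q$ and the trace construction requires $Q$ to be the smaller domain.
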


\begin{proof}
Assume $p$ and~$p'$ are two problem functions induced by~$(Q,S,\mu)$ and~$(Q',S',\mu')$, respectively. Take any $K\in \mathscr{K}$; then there exists~$T\in \mathscr{F}(S)$ with~$K=p(T)$. We conclude that $p(T)=p'(T)\cap Q$. For any $q\in p(T)$, it follows that~$q\in Q$ and there exists~$C_{q}\in \mu(q)=\mu'(q)$ with $C_{q}\subseteq T$. Thus~$q\in p'(T)\cap Q$. This prove~$p(T)\subseteq p'(T)\cap Q$. Conversely, take any $q'\in p'(T)\cap Q$, then~$q'\in Q$ and there exists~$C_{q'}\in \mu'(q')=\mu(q')$ with $C_{q'}\subseteq T$. Hence~$q'\in p(T)\cap Q$. This prove~$p'(T)\cap Q\subseteq p(T)$. Consequently,~$p(T)=p'(T)\cap Q$.
\end{proof}

\vspace{3mm}
From Lemma~\ref{1-7} and Proposition~\ref{dd}, we have the following result immediately.

\begin{prop}
Assume that $(Q',\mathscr{K}')$ and~$(Q,\mathscr{K})$ are knowledge structures delineated by the fuzzy skill multimaps~$(Q',S',\mu')$ and~$(Q,S,\mu)$ respectively, and assume that $(Q,S,\mu)$ is a submultimap of~$(Q',S',\mu')$. Then~$(Q,\mathscr{K})$ is discriminative (resp. bi-discriminative) if~$(Q',\mathscr{K}')$ is discriminative (resp. bi-discriminative).
\end{prop}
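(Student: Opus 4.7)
The plan is to derive the proposition immediately from the two results cited just before it, namely Lemma~\ref{1-7} and Proposition~\ref{dd}. Lemma~\ref{1-7} asserts that whenever $(Q,S,\mu)$ is a submultimap of $(Q',S',\mu')$, the delineated structure $(Q,\mathscr{K})$ is a substructure of $(Q',\mathscr{K}')$, i.e., $\mathscr{K}=\mathscr{K}'|_{Q}$. Proposition~\ref{dd}, in turn, states that any substructure inherits discriminativeness and bi-discriminativeness from its parent structure. Chaining these two facts gives the conclusion without further work.

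Concretely, I would first invoke Lemma~\ref{1-7} on the hypothesis that $(Q,S,\mu)$ is a submultimap of $(Q',S',\mu')$ to conclude that $(Q,\mathscr{K})$ is a substructure of $(Q',\mathscr{K}')$. Then, under the assumption that $(Q',\mathscr{K}')$ is discriminative (respectively, bi-discriminative), I would apply Proposition~\ref{dd} to the pair $((Q',\mathscr{K}'),(Q,\mathscr{K}))$ to conclude that $(Q,\mathscr{K})$ is discriminative (respectively, bi-discriminative). No further manipulation of the fuzzy competencies or of the problem function is required, because both Lemma~\ref{1-7} and Proposition~\ref{dd} have already done that work.

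There is essentially no obstacle here; the content of the proposition lies entirely in the two preparatory results. If one preferred a self-contained argument bypassing Proposition~\ref{dd}, one could instead route through Theorem~\ref{fff} or Theorem~\ref{1-3}: since $\mu(q)=\mu'(q)$ for every $q\in Q$, any failure of the refinement conditions $\mu'(q)\not\preccurlyeq \mu'(r)$ (respectively, in both directions) would be inherited verbatim by $\mu(q),\mu(r)$, yielding discriminativeness (respectively, bi-discriminativeness) of $(Q,\mathscr{K})$. However, this amounts to repeating the proof of Proposition~\ref{dd} in a different vocabulary and offers no genuine simplification, so the two-line appeal to Lemma~\ref{1-7} and Proposition~\ref{dd} is the cleanest route.
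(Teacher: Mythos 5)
Your proposal is correct and matches the paper exactly: the paper states this proposition as an immediate consequence of Lemma~\ref{1-7} and Proposition~\ref{dd}, which is precisely the two-step chain you describe.
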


Heller and Repitsch defined the merge~$(Q,S,\mu)$ of a family~$\{(Q_{i},S_{i},\mu_{i})| i\in I\}$ of skill functions~\cite{ref22} that is applied to fuzzy skill functions in this section.

\begin{defn}[\cite{ref22}]\label{bbb}
{\rm Let $\{(Q_{i}, \mathscr{K}_{i})|i\in I\}$ be a family of knowledge structures. We say that a knowledge structure~$(Q,\mathscr{K})$ is a {\it mesh} of $\{(Q_{i}, \mathscr{K}_{i})|i\in I\}$ if the following conditions hold.

 \smallskip
(1)~$Q=\bigcup_{i\in I} Q_{i}$.

\smallskip
(2)~$\mathscr{K}_{i}=\mathscr{K}|_{Q_{i}}$ for all~$i\in I$.

Moreover, we say that $\{(Q_{i},\mathscr{K}_{i})|i\in I\}$ is {\it meshable}.}
\end{defn}

\begin{defn}~\label{aaa}
{\rm Assume that $\{(Q_{i},S_{i},\mu_{i})| i\in I\}$ is a family of fuzzy skill functions, and assume that $J=\bigcup_{i\in I}J_{i}$. We say that $(Q, S, \mu)$ is {\it merge} of $\{(Q_{i},S_{i},\mu_{i})| i\in I\}$ if the following conditions hold.

 \smallskip
(1)~$Q=\bigcup_{i\in I} Q_{i}$.

 \smallskip
(2)~$S=\bigcup_{i\in I} S_{i}$.

 \smallskip
(3)~$\mu(q)=\bigcup_{i\in I} \mu^{*}_{i}(q)$ for each~$q\in Q$, where~$\mu^{*}_{i}(q)=\{\{\frac{C_{i}(s_{j})}{s_{j}},\frac{0}{s_{k}}\}_{j\in J_{i}, k\in J\backslash J_{i}}| C_{i}\in \mu_{i}(q)\}$ if~$q\in Q_{i}$, and~$\mu^{*}_{i}(q)=\emptyset$ otherwise.

Moreover, if the merge~$(Q,S,\mu)$ is a fuzzy skill function, then it is said to be a {\it distributed fuzzy skill function}.}
\end{defn}

\begin{Notation}~\label{ffff}
Assume that $\{(Q_{i},S_{i},\mu_{i})| i\in I\}$ are a family of fuzzy skill functions, assume that each $\mu^{*}_{i}$ is described as in Definition~\ref{aaa}, and assume that~$(Q,S,\mu)$ is the corresponding distributed fuzzy skill function in the rest of this paper.
Additionally, let~$(Q,\mathscr{K})$ and~$\{(Q_{i},\mathscr{K}_{i})|i\in I\} (i\in I)$ be knowledge structure delineated by~$(Q,S,\mu)$ and~$(Q_{i},S_{i},\mu_{i})$ respectively.
\end{Notation}

The following results are generalizations of some of the results applied in skill functions provided by Heller and Repitsch~\cite[Section 5]{ref22}. Likewise, in this section we are always assume that a collection of fuzzy skill functions~$(Q_{i},S_{i},\mu_{i}), i\in I$ gives rise to a distributed fuzzy skill function~$(Q,S,\mu)$. Moreover, the problem functions~$p_{i}$ and~$p$ are induced by~$\mu_{i}$ and~$\mu$ respectively. From Definitions~\ref{aaaa} and~\ref{aaa}, it is obvious that the following lemma holds.

\begin{lemma}\label{tt}
For each~$i\in I$, the following statements hold.

\smallskip
(1) for any $q\in Q_{i}$ and $C_{i}\in\mu^{*}_{i}(q)$, there is~$C\in\mu(q)$ with $$C_{i}=C\cap\{\frac{1}{s_{j}}, \frac{0}{s_{k}}\}_{j\in J_{i}, k\in J\backslash J_{i}};$$

\smallskip
(2) for any $q\in Q_{i}$ and ~$C\in\mu(q)$, we have $C\cap\{\frac{1}{s_{j}}, \frac{0}{s_{k}}\}_{j\in J_{i}, k\in J\backslash J_{i}}\subseteq C$;

\smallskip
(3)~$p^{*}_{i}(T\cap \{\frac{1}{s_{j}}, \frac{0}{s_{k}}\}_{j\in J_{i}, k\in J\backslash J_{i}})\subseteq p(T\cap \{\frac{1}{s_{j}}, \frac{0}{s_{k}}\}_{j\in J_{i}, k\in J\backslash J_{i}})\cap Q_{i}\subseteq p(T)\cap Q_{i}$;

\smallskip
Further, we have

\smallskip
(4)~$p(T)=\bigcup_{i\in I}p^{*}_{i}(T\bigcap \{\frac{1}{s_{j}}, \frac{0}{s_{k}}\}_{j\in J_{i}, k\in J\backslash J_{i}})$.
\end{lemma}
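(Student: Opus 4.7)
My plan is to handle the four assertions in order, since each subsequent one uses the previous. Throughout, write $T_i := T \cap \{\frac{1}{s_j},\frac{0}{s_k}\}_{j\in J_i, k\in J\backslash J_i}$ to keep the display clean; note that $T_i(s_j)=T(s_j)$ for $j\in J_i$ and $T_i(s_k)=0$ for $k\in J\backslash J_i$.

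For (1), I would just unwind Definition~\ref{aaa}. If $q\in Q_i$ and $C_i\in\mu^*_i(q)$, then by construction there is some $C'\in\mu_i(q)$ with $C_i=\{\frac{C'(s_j)}{s_j},\frac{0}{s_k}\}_{j\in J_i,k\in J\backslash J_i}$; in particular $C_i(s_k)=0$ for every $k\in J\backslash J_i$. Setting $C:=C_i$ itself works: since $\mu(q)=\bigcup_{i\in I}\mu^*_i(q)$, we have $C\in\mu(q)$, and a pointwise check gives $C\cap\{\frac{1}{s_j},\frac{0}{s_k}\}_{j\in J_i,k\in J\backslash J_i}=C$, which is $C_i$. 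Assertion (2) is the trivial fact that $U\cap V\subseteq U$ for fuzzy sets.

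For (3), the first inclusion uses only $\mu^*_i(q)\subseteq\mu(q)$: if $q\in p^*_i(T_i)$, then $q\in Q_i$ and some $C\in\mu^*_i(q)\subseteq\mu(q)$ satisfies $C\subseteq T_i$, so $q\in p(T_i)\cap Q_i$. The second inclusion comes from the monotonicity of $p$: since $T_i\subseteq T$ pointwise, any witness $C\subseteq T_i$ is also a witness for $T$, giving $p(T_i)\subseteq p(T)$.

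Assertion (4) is where the actual content lies. The $\supseteq$ direction is immediate from (3), which gives $p^*_i(T_i)\subseteq p(T)\cap Q_i\subseteq p(T)$ for every $i$. For $\subseteq$, take $q\in p(T)$ with witness $C\in\mu(q)$, $C\subseteq T$. Using $\mu(q)=\bigcup_{i\in I}\mu^*_i(q)$, pick $i_0\in I$ with $C\in\mu^*_{i_0}(q)$ (so in particular $q\in Q_{i_0}$). The key observation, which I expect to be the one point that needs a careful pointwise check, is that $C$ automatically vanishes outside $J_{i_0}$: by Definition~\ref{aaa}, every element of $\mu^*_{i_0}(q)$ is zero on $\{s_k:k\in J\backslash J_{i_0}\}$. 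Therefore, for $j\in J_{i_0}$, $C(s_j)\le T(s_j)=T_{i_0}(s_j)$, and for $k\in J\backslash J_{i_0}$, $C(s_k)=0=T_{i_0}(s_k)$. Hence $C\subseteq T_{i_0}$, which places $q$ in $p^*_{i_0}(T_{i_0})$, completing the reverse inclusion. No single step is hard; the only place to be careful is aligning the two roles of the ``padding by zeros'' operation in the $\subseteq$ direction of (4).
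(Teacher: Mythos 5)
Your proof is correct, and it matches the paper's intent exactly: the paper states this lemma without proof, merely asserting it is ``obvious'' from Definitions~\ref{aaaa} and~\ref{aaa}, and your argument is precisely the direct unwinding of those definitions that the authors have in mind. The one step with real content --- that in part (4) any witness $C\in\mu(q)=\bigcup_{i}\mu^{*}_{i}(q)$ lies in some $\mu^{*}_{i_{0}}(q)$ and therefore already vanishes outside $J_{i_{0}}$, forcing $C\subseteq T\cap\{\frac{1}{s_{j}},\frac{0}{s_{k}}\}_{j\in J_{i_{0}},k\in J\backslash J_{i_{0}}}$ --- is handled correctly.
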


\begin{prop}\label{gg}
For each~$i\in I$ the following statements are equivalent.

\smallskip
(1) For any $q\in Q$ and $C\in\mu(q)$, if $C\subseteq \{\frac{1}{s_{j}}, \frac{0}{s_{k}}\}_{j\in J_{i}, k\in J\backslash J_{i}}$ for $i\in I$ then we have $C\in \mu^{*}_{i}(q)$.

\smallskip
(2) For any $i\in I$, we have $$p^{*}_{i}(T\cap \{\frac{1}{s_{j}}, \frac{0}{s_{k}}\}_{j\in J_{i}, k\in J\backslash J_{i}})=p(T\cap \{\frac{1}{s_{j}}, \frac{0}{s_{k}}\}_{j\in J_{i}, k\in J\backslash J_{i}})\cap Q_{i}$$ for each fuzzy set~$T\in \mathscr{F}(S)$.

\smallskip
Further, (1) and (2) imply~$\mathscr{K}_{i}\subseteq \mathscr{K}|_{Q_{i}}$.
\end{prop}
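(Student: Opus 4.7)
The plan is to establish the equivalence $(1)\Leftrightarrow(2)$ first and then deduce $\mathscr{K}_{i}\subseteq\mathscr{K}|_{Q_{i}}$ directly from $(2)$. For brevity in this sketch I abbreviate $E_{i}=\{\frac{1}{s_{j}},\frac{0}{s_{k}}\}_{j\in J_{i},\,k\in J\setminus J_{i}}$, the ``characteristic'' fuzzy set of $S_{i}$ inside $\mathscr{F}(S)$, so that the hypothesis in $(1)$ and the intersection in $(2)$ read $C\subseteq E_{i}$ and $T\cap E_{i}$ respectively.

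For $(1)\Rightarrow(2)$, the inclusion $p_{i}^{*}(T\cap E_{i})\subseteq p(T\cap E_{i})\cap Q_{i}$ is immediate from Lemma~\ref{tt}(3), so only the reverse inclusion needs work: I would take $q\in p(T\cap E_{i})\cap Q_{i}$, pick a witness $C\in\mu(q)$ with $C\subseteq T\cap E_{i}\subseteq E_{i}$, invoke $(1)$ to upgrade this to $C\in\mu_{i}^{*}(q)$, and read off $q\in p_{i}^{*}(T\cap E_{i})$. For $(2)\Rightarrow(1)$, I would specialise $(2)$ to the concrete fuzzy set $T=C$: since $C\subseteq E_{i}$ we have $T\cap E_{i}=C$, and $q\in p(C)$ because $C\in\mu(q)$; the decomposition $\mu(q)=\bigcup_{i'\in I}\mu_{i'}^{*}(q)$ of Definition~\ref{aaa}, together with the explicit zero-padded description of each $\mu_{i'}^{*}$, is then used to force both $q\in Q_{i}$ and $C\in\mu_{i}^{*}(q)$.

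For the final assertion, take $K_{i}\in\mathscr{K}_{i}$ and write $K_{i}=p_{i}(T_{i})$ for some $T_{i}\in\mathscr{F}(S_{i})$. Extending $T_{i}$ to $T\in\mathscr{F}(S)$ by setting $T(s_{k})=0$ for $k\in J\setminus J_{i}$, one has $T=T\cap E_{i}$ and, directly from Definition~\ref{aaa}, $p_{i}(T_{i})=p_{i}^{*}(T)$. Applying $(2)$ then identifies this common value with $p(T)\cap Q_{i}\in\mathscr{K}|_{Q_{i}}$, which is exactly what is needed.

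The step I expect to cause the most trouble is $(2)\Rightarrow(1)$. Specialising $(2)$ to $T=C$ only places $q$ inside $p_{i}^{*}(C)$, which \emph{a priori} provides merely some $C^{*}\in\mu_{i}^{*}(q)$ with $C^{*}\subseteq C$, whereas $(1)$ insists on $C$ itself. Promoting the refining $C^{*}$ to $C$ has to be done by exploiting the explicit zero-padded form of elements of $\mu_{i}^{*}(q)$ from Definition~\ref{aaa} together with the hypothesis that $C$ already vanishes off $S_{i}$; this is precisely what is needed to rule out $C$ arising genuinely from some $\mu_{i'}^{*}(q)$ with $i'\neq i$, and is where careful index bookkeeping between $J_{i}$ and $J\setminus J_{i}$ enters.
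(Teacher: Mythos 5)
Your treatment of $(1)\Rightarrow(2)$ and of the final assertion $\mathscr{K}_{i}\subseteq\mathscr{K}|_{Q_{i}}$ agrees with the paper's proof (you are in fact more explicit than the paper about identifying $p_{i}$ with $p^{*}_{i}$ via zero-padding). The genuine gap is in $(2)\Rightarrow(1)$, precisely at the step you yourself flag. Specialising $(2)$ to $T=C$ yields some $C^{*}\in\mu^{*}_{i}(q)$ with $C^{*}\subseteq C$, and you propose to promote this to $C\in\mu^{*}_{i}(q)$ by ``index bookkeeping between $J_{i}$ and $J\setminus J_{i}$'', i.e.\ by comparing supports. That cannot work: the skill domains $S_{i}$ are not assumed pairwise disjoint, so a competence vanishing off $S_{i}$ may perfectly well originate from $\mu^{*}_{i'}(q)$ with $i'\neq i$ and $S_{i}\cap S_{i'}\neq\emptyset$; and in any case both $C^{*}$ and $C$ are supported inside $S_{i}$, so no amount of support comparison converts the inclusion $C^{*}\subseteq C$ into the equality you need.

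The missing ingredient is the standing hypothesis of Notation~\ref{ffff} and Definition~\ref{aaa} that the merge $(Q,S,\mu)$ is a distributed fuzzy skill \emph{function}, i.e.\ that each $\mu(q)$ is a pairwise incomparable family. Since $C^{*}\in\mu^{*}_{i}(q)\subseteq\mu(q)$ and $C\in\mu(q)$ with $C^{*}\subseteq C$, incomparability forces $C^{*}=C$, hence $C=C^{*}\in\mu^{*}_{i}(q)$; this is exactly the paper's closing step (``since $p$ is a skill function\dots''). Note also that both your argument and the paper's tacitly assume $q\in Q_{i}$ in $(2)\Rightarrow(1)$ (otherwise $\mu^{*}_{i}(q)=\emptyset$ and one cannot even place $q$ in $p(C)\cap Q_{i}$); your claim that the decomposition $\mu(q)=\bigcup_{i'\in I}\mu^{*}_{i'}(q)$ ``forces $q\in Q_{i}$'' fails for the same reason as the support argument, so this should be read as an implicit hypothesis of statement $(1)$. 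With the antichain property of $\mu(q)$ inserted at the flagged step, the remainder of your outline reproduces the paper's proof.
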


\begin{proof}
(1) $\Rightarrow$ (2). Fix any $i\in I$, and take any~$T\in\mathscr{F}(S)$. By Lemma~\ref{tt}, it suffices to prove that $$p(T\cap \{\frac{1}{s_{j}}, \frac{0}{s_{k}}\}_{j\in J_{i}, k\in J\backslash J_{i}})\cap Q_{i}\subseteq p^{*}_{i}(T\cap \{\frac{1}{s_{j}}, \frac{0}{s_{k}}\}_{j\in J_{i}, k\in J\backslash J_{i}}).$$  Pick any~$q\in p(T\cap \{\frac{1}{s_{j}}, \frac{0}{s_{k}}\}_{j\in J_{i}, k\in J\backslash J_{i}})\cap Q_{i}$. Then there is~$C\in\mu(q)$ with $C\subseteq T\cap\{\frac{1}{s_{j}}, \frac{0}{s_{k}}\}_{j\in J_{i}, k\in J\backslash J_{i}}$, hence $C\subseteq \{\frac{1}{s_{j}}, \frac{0}{s_{k}}\}_{j\in J_{i}, k\in J\backslash J_{i}}$. By our assumption, we have $C\in \mu^{*}_{i}(q)$, hence it follows that $q\in p^{*}_{i}(T\cap \{\frac{1}{s_{j}}, \frac{0}{s_{k}}\}_{j\in J_{i}, k\in J\backslash J_{i}})$.

(2) $\Rightarrow$ (1). Take any $q\in Q$ and $C\in\mu(q)$. Assume that $C\subseteq \{\frac{1}{s_{j}}, \frac{0}{s_{k}}\}_{j\in J_{i}, k\in J\backslash J_{i}}$ for $i\in I$. Since $C\in\mu(q)$ and $q\in Q_{i}$, it follows that $$q\in p(C)\cap Q_{i}=p(C\cap \{\frac{1}{s_{j}}, \frac{0}{s_{k}}\}_{j\in J_{i}, k\in J\backslash J_{i}})\cap Q_{i}=p^{*}_{i}(C\cap \{\frac{1}{s_{j}}, \frac{0}{s_{k}}\}_{j\in J_{i}, k\in J\backslash J_{i}}).$$ Thus, there exists $C_{i}\in \mu^{*}_{i}(q)$ such that $C_{i}\subseteq C\cap \{\frac{1}{s_{j}}, \frac{0}{s_{k}}\}_{j\in J_{i}, k\in J\backslash J_{i}}=C$. However, since $p$ is a skill function, we have that $C_{i}= C\cap \{\frac{1}{s_{j}}, \frac{0}{s_{k}}\}_{j\in J_{i}, k\in J\backslash J_{i}}\in\mu^{*}_{i}(q)$, thus $C_{i}=C$.

In order to prove that (2) implies~$\mathscr{K}_{i}\subseteq \mathscr{K}|_{Q_{i}}$, take any state~$K_{i}$ in~$\mathscr{K}_{i}$. Hence there exists~$T\in \mathscr{F}(S)$ such that$$K_{i}=p^{*}_{i}(T\cap \{\frac{1}{s_{j}}, \frac{0}{s_{k}}\}_{j\in J_{i}, k\in J\backslash J_{i}})=p(T\cap \{\frac{1}{s_{j}}, \frac{0}{s_{k}}\}_{j\in J_{i}, k\in J\backslash J_{i}})\cap Q_{i}.$$ Since~$p(T\cap \{\frac{1}{s_{j}}, \frac{0}{s_{k}}\}_{j\in J_{i}, k\in J\backslash J_{i}})\in \mathscr{K}$, we conclude that $K_{i}\in\mathscr{K}|_{Q_{i}}$.
\end{proof}

\begin{prop}\label{ggg}
For each~$i\in I$ the following statements are equivalent.

\smallskip
(1) For any~$q\in Q$ and $C\in\mu(q)$, if $q\in Q_{i}$ for $i\in I$ then $C\subseteq \{\frac{1}{s_{j}}, \frac{0}{s_{k}}\}_{j\in J_{i}, k\in J\backslash J_{i}}$.

\smallskip
(2) For each $i\in I$, we have $p(T\cap \{\frac{1}{s_{j}}, \frac{0}{s_{k}}\}_{j\in J_{i}, k\in J\backslash J_{i}})\cap Q_{i}=p(T)\cap Q_{i}$ for each fuzzy set~$T\in \mathscr{F}(S)$.

\smallskip
Further, (1) and (2) imply~$\mathscr{K}_{i}=\mathscr{K}|_{Q_{i}}$.
\end{prop}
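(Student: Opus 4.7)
My plan splits into two tasks: proving the equivalence $(1)\Leftrightarrow(2)$ and then deriving the further claim $\mathscr{K}_{i}=\mathscr{K}|_{Q_{i}}$. Write $A_{i}=\{\frac{1}{s_{j}},\frac{0}{s_{k}}\}_{j\in J_{i},k\in J\setminus J_{i}}$ for brevity. The inclusion $p(T\cap A_{i})\cap Q_{i}\subseteq p(T)\cap Q_{i}$ comes for free from Lemma~\ref{tt}(3), so only the reverse is at stake in~(2).

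For $(1)\Rightarrow(2)$, pick $q\in p(T)\cap Q_{i}$ together with a witness $C\in\mu(q)$ satisfying $C\subseteq T$; condition~(1) applied to $q\in Q_{i}$ forces $C\subseteq A_{i}$, whence $C\subseteq T\cap A_{i}$ and $q\in p(T\cap A_{i})\cap Q_{i}$. For $(2)\Rightarrow(1)$, given $q\in Q_{i}$ and $C\in\mu(q)$, the equality $p(C)\cap Q_{i}=p(C\cap A_{i})\cap Q_{i}$ provided by~(2) yields some $C'\in\mu(q)$ with $C'\subseteq C\cap A_{i}\subseteq C$; since $(Q,S,\mu)$ is a fuzzy skill function by Notation~\ref{ffff}, the elements of $\mu(q)$ are pairwise incomparable, so $C'=C$ and $C\subseteq A_{i}$.

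For the final claim I would prove both inclusions directly. For $\mathscr{K}_{i}\subseteq\mathscr{K}|_{Q_{i}}$, take $K=p_{i}(F)\in\mathscr{K}_{i}$, extend $F\in\mathscr{F}(S_{i})$ by zero to $T\in\mathscr{F}(S)$ so that $T\subseteq A_{i}$, and use the inclusion $\mu^{*}_{i}(q)\subseteq\mu(q)$ for $q\in Q_{i}$ together with the canonical correspondence $C_{i}\leftrightarrow C_{i}^{*}$ to verify $p(T)\cap Q_{i}=K$. For $\mathscr{K}|_{Q_{i}}\subseteq\mathscr{K}_{i}$, start from the rewriting $p(T)\cap Q_{i}=p(T\cap A_{i})\cap Q_{i}$ supplied by~(2), and identify this set with $p_{i}((T\cap A_{i})|_{S_{i}})$ by descending each witness $C\in\mu(q)$ with $C\subseteq T\cap A_{i}$ to a witness $C_{i}\in\mu_{i}(q)$ with $C_{i}\subseteq(T\cap A_{i})|_{S_{i}}$.

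The main obstacle I expect is precisely this descent step in the reverse inclusion: a priori the witness $C\in\mu(q)=\bigcup_{i'\ni q}\mu^{*}_{i'}(q)$ could lie in $\mu^{*}_{i'}(q)$ for some $i'\neq i$, so that its restriction to $S_{i}$ is not obviously an element of $\mu_{i}(q)$. The key leverage is that~(1) forces every $C\in\mu(q)$ to satisfy $C\subseteq A_{i}$ and the fuzzy skill function property of the merge forces pairwise incomparability of $\mu(q)$; jointly these should constrain $C$ enough to extract a suitable $C_{i}\in\mu_{i}(q)$, using the same incomparability technique that drove $(2)\Rightarrow(1)$ above.
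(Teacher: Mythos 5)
Your handling of the equivalence $(1)\Leftrightarrow(2)$ is correct and is essentially the paper's own argument: $(1)\Rightarrow(2)$ is the same one-line reduction via Lemma~\ref{tt}, and your $(2)\Rightarrow(1)$, which uses pairwise incomparability of $\mu(q)$ to force $C'=C$, is just the direct form of the paper's proof by contradiction (``which contradicts the minimality'').

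For the final claim, however, your proposal has a genuine gap --- and you have put your finger on exactly the right spot: the ``descent step'' you flag cannot be carried out from the stated hypotheses. Write $A_{i}=\{\frac{1}{s_{j}},\frac{0}{s_{k}}\}_{j\in J_{i},k\in J\backslash J_{i}}$. Both of your inclusions ultimately require $p(T\cap A_{i})\cap Q_{i}\subseteq p^{*}_{i}(T\cap A_{i})$, i.e.\ that a witness $C\in\mu^{*}_{i'}(q)$ with $i'\neq i$ and $C\subseteq T\cap A_{i}$ can be traded for a witness coming from $\mu_{i}(q)$. Neither condition (1) nor the incomparability of $\mu(q)$ yields this; what is really needed is condition (1) of Proposition~\ref{gg} (every $C\in\mu(q)$ with $C\subseteq A_{i}$ already lies in $\mu^{*}_{i}(q)$), which does not follow from condition (1) here. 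Indeed the descent can fail: take $S_{1}=S_{2}=\{s_{1},s_{2}\}$, $Q_{1}=\{q,a\}$, $Q_{2}=\{q\}$, $\mu_{1}(q)=\mu_{1}(a)=\{\{\frac{0.5}{s_{1}}\}\}$ and $\mu_{2}(q)=\{\{\frac{0.3}{s_{2}}\}\}$. The merge is a fuzzy skill function ($\mu(q)$ consists of two incomparable competencies), and since $J_{1}=J$ the set $A_{1}$ is the top element of $\mathscr{F}(S)$, so (1) and (2) hold for $i=1$ vacuously; yet $\mathscr{K}_{1}=\{\emptyset,\{q,a\}\}$ while $p(\{\frac{0}{s_{1}},\frac{0.3}{s_{2}}\})\cap Q_{1}=\{q\}\in\mathscr{K}|_{Q_{1}}$. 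You should be aware that the paper's own proof of this part is the single chain $\mathscr{K}_{i}=\{p^{*}_{i}(T\cap A_{i})\,|\,T\in\mathscr{F}(S)\}=\{p(T)\cap Q_{i}\,|\,T\in\mathscr{F}(S)\}=\mathscr{K}|_{Q_{i}}$, whose middle equality silently assumes precisely the descent you could not justify; so the obstacle you identified is a defect of the statement as literally formulated (it needs the hypothesis of Proposition~\ref{gg} added), not a shortcoming of your strategy.
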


\begin{proof}
(1) $\Rightarrow$ (2). Take any $T\in \mathscr{F}(S)$. By Lemma~\ref{tt}, it suffices to prove $$p(T)\cap Q_{i}\subseteq p(T\cap \{\frac{1}{s_{j}}, \frac{0}{s_{k}}\}_{j\in J_{i}, k\in J\backslash J_{i}})\cap Q_{i}.$$ Let $q\in p(T)\cap Q_{i}$ for some $i\in I$. Hence there exists~$C\in \mu(q)$ with~$C\subseteq T$. By our assumption, we have $C\subseteq \{\frac{1}{s_{j}}, \frac{0}{s_{k}}\}_{j\in J_{i}, k\in J\backslash J_{i}}$, hence $C\subseteq T\cap \{\frac{1}{s_{j}}, \frac{0}{s_{k}}\}_{j\in J_{i}, k\in J\backslash J_{i}}$. Consequently, we have~$q\in p(T\cap \{\frac{1}{s_{j}}, \frac{0}{s_{k}}\}_{j\in J_{i}, k\in J\backslash J_{i}})\cap Q_{i}$.

(2) $\Rightarrow$ (1).  Suppose that there exists~$q\in Q_{i}$ for some $i\in I$ and $C\in \mu(q)$ such that $C\not\subseteq \{\frac{1}{s_{j}}, \frac{0}{s_{k}}\}_{j\in J_{i}, k\in J\backslash J_{i}}$. However, $q\in p(C)\cap Q_{i}$, then it follows from our assumption that~$q\in p(C\cap \{\frac{1}{s_{j}}, \frac{0}{s_{k}}\}_{j\in J_{i}, k\in J\backslash J_{i}})\cap Q_{i}$. Therefore, there exists~$C'\in \mu(q)$ with~$C'\subseteq C\cap \{\frac{1}{s_{j}}, \frac{0}{s_{k}}\}_{j\in J_{i}, k\in J\backslash J_{i}}\subset C$, which contradicts the minimality.

 To show that~$\mathscr{K}_{i}=\mathscr{K}|_{Q_{i}}$ is implied by (2) we only need to observe that for any~$i\in I$
$$\mathscr{K}_{i}=\{p^{*}_{i}(T\cap \{\frac{1}{s_{j}}, \frac{0}{s_{k}}\}_{j\in J_{i}, k\in J\backslash J_{i}})| T\in \mathscr{F}(S)\}=\{p(T)\cap Q_{i}|T\in \mathscr{F}(S)\}=\mathscr{K}|_{Q_{i}}.$$
\end{proof}

The knowledge structure~$(Q,\mathscr{K})$, which is delineated by the distributed fuzzy skill function~$\mu$, is a mesh of the family of the knowledge structures~$\{(Q_{i},\mathscr{K}_{i}): i\in I\}$ delineated by the component fuzzy skill functions~$\mu_{i} (i\in I)$ under the condition such that if the domains of any fuzzy skill function overlap, then they all assign the same competencies to the common problems.

if the problem domains are pairwise disjoint, the we have the following corollary from Propositions~\ref{gg} and~\ref{ggg}.

\begin{cor}\label{1-8}
Assume $(Q, S, \mu)$ is the corresponding distributed fuzzy skill function of the family of fuzzy skill functions $\{(Q_{i},S_{i},\mu_{i}): i\in I\}$.

\smallskip
(1) If $\{Q_{i}: i\in I\}$ is pairwise disjoint then the knowledge structure~$\mathscr{K}$ delineated by~$\mu$ is a mesh of the component knowledge structures~$\{\mathscr{K}_{i}: i\in I\}$, that is, ~$\mathscr{K}_{i}=\mathscr{K}|_{Q_{i}}$ for each~$i\in I$.

\smallskip
(2) If the skill domains~$\{S_{i}: i\in I\}$ is pairwise disjoint then~$\mathscr{K}_{i}\subseteq \mathscr{K}|_{Q_{i}}$ for each~$i\in I$.
\end{cor}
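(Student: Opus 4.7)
The plan is to derive both parts directly by checking the hypotheses of Propositions~\ref{gg} and~\ref{ggg}; once those are verified, the propositions already give the required trace identities, so nothing further is needed beyond noting that $Q=\bigcup_{i\in I}Q_i$ is built into Definition~\ref{aaa} (and hence into the mesh condition of Definition~\ref{bbb}).

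For part (1), the hypothesis to verify is condition (1) of Proposition~\ref{ggg}: every $C\in\mu(q)$ with $q\in Q_i$ must satisfy $C\subseteq\{\frac{1}{s_j},\frac{0}{s_k}\}_{j\in J_i,k\in J\setminus J_i}$. If $q\in Q_i$ and the $Q_{i'}$'s are pairwise disjoint, then $q\notin Q_{i'}$ for every $i'\neq i$, so $\mu^*_{i'}(q)=\emptyset$ by Definition~\ref{aaa}, and hence $\mu(q)=\mu^*_i(q)$. But every element of $\mu^*_i(q)$ is, by construction, of the form $\{\frac{C_i(s_j)}{s_j},\frac{0}{s_k}\}_{j\in J_i,k\in J\setminus J_i}$, which is supported on $J_i$ as required. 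Invoking Proposition~\ref{ggg} gives $\mathscr{K}_i=\mathscr{K}|_{Q_i}$ for every $i\in I$, so $\mathscr{K}$ is a mesh of $\{\mathscr{K}_i:i\in I\}$ by Definition~\ref{bbb}.

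For part (2), the hypothesis to verify is condition (1) of Proposition~\ref{gg}: if $C\in\mu(q)$ with $C\subseteq\{\frac{1}{s_j},\frac{0}{s_k}\}_{j\in J_i,k\in J\setminus J_i}$ for some fixed $i$, then $C\in\mu^*_i(q)$. Since $\mu(q)=\bigcup_{i'\in I}\mu^*_{i'}(q)$, pick $i'$ with $C\in\mu^*_{i'}(q)$, coming from a competence $C_{i'}\in\mu_{i'}(q)$; I want to rule out $i'\neq i$. The skill domains are pairwise disjoint, so $J_{i'}\subseteq J\setminus J_i$; then $C\subseteq\{\frac{1}{s_j},\frac{0}{s_k}\}_{j\in J_i,k\in J\setminus J_i}$ forces $C(s_j)=0$ for every $j\in J_{i'}$, which means $C_{i'}$ is the all-zero fuzzy set on $S_{i'}$, contradicting the standing convention that every element of $\mu_{i'}(q)$ is a non-empty fuzzy set. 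Hence $i'=i$ and $C\in\mu^*_i(q)$, so Proposition~\ref{gg} delivers $\mathscr{K}_i\subseteq\mathscr{K}|_{Q_i}$. The only mildly delicate step in the whole argument is this appeal to non-emptiness of each competence; otherwise both parts are just an unpacking of the definition of the merge combined with the preceding propositions, and I do not foresee any real obstacle.
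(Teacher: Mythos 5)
Your proposal is correct and follows exactly the route the paper intends: the paper gives no explicit proof, merely asserting that the corollary follows from Propositions~\ref{gg} and~\ref{ggg}, and you supply precisely the verification of their hypotheses (condition (1) of each) from the disjointness assumptions and Definition~\ref{aaa}. The appeal to non-emptiness of competencies in part (2) is legitimate under the paper's convention that $\mu$ maps into $2^{\mathscr{F}(S)\backslash\emptyset}\backslash\emptyset$, so no gap remains.
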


\begin{defn}\label{1-9}
{\rm We say that $\{(Q_{i},S_{i},\mu_{i})| i\in I\}$ is {\it consistent} if the family~$\{(Q_{i},\mathscr{K}_{i})|i\in I\}$ is meshable.
Further,~$\{(Q_{i},S_{i},\mu_{i})| i\in I\}$ is~{\it $(Q,S,\mu)$-consistent} if~$(Q,\mathscr{K})$ is mesh of~$\{(Q_{i},\mathscr{K}_{i})|i\in I\}$.}
\end{defn}

Heller et al. gave the definition of consistent if~$\{(Q_{i},S_{i},\mu_{i})| i\in I\}$ is a family of skill functions~\cite{ref23}, which applied to fuzzy skill functions in this section. Hence, the following lemma holds by Corollary~\ref{1-8} and Definition~\ref{1-9}.

\begin{lemma}~\label{cccc}
If~$\{Q_{i}|i\in I\}$ is pairwise disjoint, then~$\{(Q_{i},S_{i},\mu_{i})| i\in I\}$ is~$(Q,S,\mu)$-consistent.
\end{lemma}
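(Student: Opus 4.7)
The plan is to simply chain together the relevant definitions and the already-established Corollary~\ref{1-8}. Unwinding Definition~\ref{1-9}, to prove that $\{(Q_{i},S_{i},\mu_{i})\mid i\in I\}$ is $(Q,S,\mu)$-consistent, I must verify that the knowledge structure $(Q,\mathscr{K})$ is a mesh of the family $\{(Q_{i},\mathscr{K}_{i})\mid i\in I\}$ in the sense of Definition~\ref{bbb}. That is, I need to check the two conditions: (a) $Q=\bigcup_{i\in I}Q_{i}$, and (b) $\mathscr{K}_{i}=\mathscr{K}|_{Q_{i}}$ for every $i\in I$.

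First, condition (a) is immediate from the construction of the distributed fuzzy skill function $(Q,S,\mu)$ in Definition~\ref{aaa}, where $Q$ is defined to be $\bigcup_{i\in I}Q_{i}$; this uses nothing about pairwise disjointness and is true in every case covered by Notation~\ref{ffff}. Second, condition (b) is exactly the conclusion of Corollary~\ref{1-8}(1) under the hypothesis that $\{Q_{i}\mid i\in I\}$ is pairwise disjoint, which is precisely the hypothesis I am given here. So I simply invoke Corollary~\ref{1-8}(1) to obtain $\mathscr{K}_{i}=\mathscr{K}|_{Q_{i}}$ for every $i\in I$.

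Combining (a) and (b), $(Q,\mathscr{K})$ satisfies both clauses of Definition~\ref{bbb} relative to $\{(Q_{i},\mathscr{K}_{i})\mid i\in I\}$, so $(Q,\mathscr{K})$ is a mesh of this family. By Definition~\ref{1-9}, this is exactly what it means for $\{(Q_{i},S_{i},\mu_{i})\mid i\in I\}$ to be $(Q,S,\mu)$-consistent, completing the proof. There is no real obstacle here: all of the substantive work has already been carried out inside Propositions~\ref{gg} and~\ref{ggg} and their consequence Corollary~\ref{1-8}(1); the present lemma is essentially a repackaging of that corollary in the terminology of consistency introduced in Definition~\ref{1-9}.
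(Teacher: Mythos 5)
Your proposal is correct and follows essentially the same route as the paper, which likewise derives the lemma directly from Corollary~\ref{1-8}(1) together with Definitions~\ref{bbb} and~\ref{1-9}. The only difference is that you spell out the two mesh conditions explicitly, which the paper leaves implicit.
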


\begin{lemma}~\label{bb}
Let $\{Q_{i}|i\in I\}$ be pairwise disjoint. Then~$(Q_{i},\mathscr{K}_{i})$ is discriminative (resp. bi-discriminative) for each~$i\in I$ if~$(Q,\mathscr{K})$ is discriminative (resp. bi-discriminative).
\end{lemma}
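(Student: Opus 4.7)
The plan is to reduce Lemma~\ref{bb} to the combination of two results already established in the excerpt: Corollary~\ref{1-8}(1), which handles the consequences of pairwise disjointness for the distributed fuzzy skill function, and Proposition~\ref{dd}, which says that substructures inherit (bi-)discriminativeness from their parent structure. Essentially, once we convert ``mesh'' into ``trace,'' the statement follows without further work.

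First, I would invoke Corollary~\ref{1-8}(1): because the family $\{Q_i : i \in I\}$ is pairwise disjoint, the knowledge structure $(Q,\mathscr{K})$ delineated by the distributed fuzzy skill function $(Q,S,\mu)$ is a mesh of the component structures $\{(Q_i,\mathscr{K}_i) : i\in I\}$, which by Definition~\ref{bbb} gives precisely the identity $\mathscr{K}_i = \mathscr{K}|_{Q_i}$ for every $i\in I$. In particular, each $(Q_i,\mathscr{K}_i)$ coincides with the substructure of $(Q,\mathscr{K})$ induced on the nonempty subset $Q_i \subseteq Q$.

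Next, I would apply Proposition~\ref{dd} to the substructure $(Q_i,\mathscr{K}|_{Q_i}) = (Q_i,\mathscr{K}_i)$: since $(Q,\mathscr{K})$ is assumed to be discriminative (resp.\ bi-discriminative), so is its substructure $(Q_i,\mathscr{K}_i)$. As $i\in I$ was arbitrary, this yields the conclusion in both the separability and bi-separability versions simultaneously.

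There is no genuine obstacle here; the only step requiring minor care is recognizing that the pairwise disjointness hypothesis is exactly what promotes the one-sided inclusion $\mathscr{K}_i \subseteq \mathscr{K}|_{Q_i}$ (guaranteed by Corollary~\ref{1-8}(2) from the disjointness of skill domains alone) to equality, and this is already packaged in Corollary~\ref{1-8}(1). Once that identification is in hand, the lemma is a one-line application of Proposition~\ref{dd}.
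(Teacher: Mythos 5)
Your proposal is correct and follows essentially the same route as the paper: pairwise disjointness of the $Q_i$ yields the mesh property $\mathscr{K}_i=\mathscr{K}|_{Q_i}$ (the paper routes this through Lemma~\ref{cccc}, which is itself just Corollary~\ref{1-8}(1) restated via Definition~\ref{1-9}), and then Proposition~\ref{dd} finishes the argument. The only quibble is your parenthetical attributing the inclusion $\mathscr{K}_i\subseteq\mathscr{K}|_{Q_i}$ to Corollary~\ref{1-8}(2), whose hypothesis is disjointness of the skill domains $S_i$ rather than of the $Q_i$ — but this aside plays no role in your actual argument.
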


\begin{proof}
Let $(Q,\mathscr{K})$ be discriminative. Since~$\{Q_{i}|
i\in I\}$ is pairwise disjoint, it follows from Lemma~\ref{cccc} that~$\{(Q_{i},S_{i},\mu_{i})| i\in I\}$ is~$(Q,S,\mu)$-consistent. Hence,~$(Q,\mathscr{K})$ is a mesh of~$\{(Q_{i},\mathscr{K}_{i})|i\in I\}$. From Definition~\ref{bbb}, it clear that~$(Q_{i},\mathscr{K}_{i})$ is a substructure of~$(Q,\mathscr{K})$ for any $i\in I$. From Proposition~\ref{dd}, $(Q_{i},\mathscr{K}_{i})$ is discriminative for any~$i\in I$.
\end{proof}

\begin{lemma}~\label{dddd}
Let $\{S_{i}|i\in I\}$ be pairwise disjoint. Then~$(Q,\mathscr{K})$ is discriminative (resp. bi-discriminative) if~$(Q_{i},\mathscr{K}_{i})$ is discriminative (resp. bi-discriminative) for each~$i\in I$.
\end{lemma}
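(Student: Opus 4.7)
The natural tool is the refinement characterization: $(Q,\mathscr{K})$ is discriminative iff, for all distinct $q,r\in Q$, $\mu(q)\not\preccurlyeq \mu(r)$ or $\mu(r)\not\preccurlyeq\mu(q)$ (Theorem~\ref{fff}), and bi\nobreakdash-discriminative iff both hold (Theorem~\ref{1-3}). So, fixing distinct $q,r\in Q$, the plan is to produce a competence in $\mu(q)$ that is not refined by $\mu(r)$ (and, in the bi\nobreakdash-discriminative case, to do the symmetric thing starting from~$r$).

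The argument splits into two cases according to whether $q$ and $r$ share a common index. \textbf{Case 1:} there exists $i\in I$ with $q,r\in Q_i$. Apply Theorem~\ref{fff} (or Theorem~\ref{1-3}) to the discriminative (bi\nobreakdash-discriminative) structure $(Q_i,\mathscr{K}_i)$ to obtain $C_i\in\mu_i(q)$ such that $C_i\not\preccurlyeq \mu_i(r)$. Lift $C_i$ to its zero\nobreakdash-extension $C^{*}\in\mu^{*}_i(q)\subseteq\mu(q)$ as in Definition~\ref{aaa}. The claim is $C^{*}\not\preccurlyeq\mu(r)$. Suppose for contradiction some $C\in\mu(r)$ satisfies $C\subseteq C^{*}$. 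Then $C\in\mu^{*}_j(r)$ for some $j$ with $r\in Q_j$, so $C$ vanishes off $S_j$; but $C\subseteq C^{*}$ forces $C$ to vanish off $S_i$ too. Hence the support of $C$ lies in $S_i\cap S_j$. \textbf{Case 2:} no index contains both $q$ and $r$. Pick any $i$ with $q\in Q_i$, so $r\notin Q_i$, and take any $C^{*}\in\mu^{*}_i(q)\subseteq\mu(q)$. Again suppose $C\in\mu(r)$ with $C\subseteq C^{*}$, giving $C\in\mu^{*}_j(r)$ for some $j\neq i$, and the same support analysis applies.

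In both cases the pairwise disjointness of $\{S_i: i\in I\}$ is the decisive ingredient: it forces $S_i\cap S_j=\emptyset$ whenever $i\neq j$, so $C$ must be the zero fuzzy set on $S$. Because competencies are required to be non\nobreakdash-empty (elements of $\mathscr{F}(S)\setminus\emptyset$) and the pre\nobreakdash-images $C_j\in\mu_j(r)$ are not identically zero on $S_j$, this is a contradiction. Thus in Case~2 we obtain $C^{*}\not\preccurlyeq\mu(r)$ directly, while in Case~1 the only surviving option is $j=i$, where $C$ is the zero\nobreakdash-extension of some $\widetilde{C}\in\mu_i(r)$ with $\widetilde{C}\subseteq C_i$, contradicting $C_i\not\preccurlyeq\mu_i(r)$. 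This gives $\mu(q)\not\preccurlyeq\mu(r)$, proving discriminativity. For the bi\nobreakdash-discriminative part, repeat the whole argument with $q,r$ interchanged (in Case~2, use some $j$ with $r\in Q_j$, automatically $q\notin Q_j$), producing $\mu(r)\not\preccurlyeq\mu(q)$ as well, and invoke Theorem~\ref{1-3}.

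The main obstacle I expect is the bookkeeping in Case~1: one has to be careful that a competence $C\in\mu(r)$ dominated by $C^{*}$ may a priori come from any $\mu^{*}_j(r)$, not just from the ``home'' component $\mu^{*}_i(r)$, so the disjointness of the skill sets together with non\nobreakdash-triviality of competencies must be invoked explicitly to rule out cross\nobreakdash-component refinements. Once that is in place, everything else reduces to translating the local refinement failure through the zero\nobreakdash-extension operation of Definition~\ref{aaa}.
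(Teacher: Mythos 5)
Your proposal is correct and matches the paper's own argument essentially step for step: the same case split (both items in a common $Q_i$ versus not), the same reduction to the refinement criteria of Theorems~\ref{fff} and~\ref{1-3} applied componentwise, and the same use of the disjointness of the $S_i$ (plus non-triviality of competencies) to rule out a competence of $r$ from another component being dominated by the lifted competence of $q$. Your handling of the cross-component possibility inside Case~1 is in fact slightly more explicit than the paper's, but it is the same proof.
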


\begin{proof}
Clearly, we only need to consider the case of discrimination. Suppose that~$(Q_{i},\mathscr{K}_{i})$ is discriminative for all~$i\in I$. Take any~$q,r\in Q$ with~$q\neq r$. We divide the proof into the following two cases.

\smallskip
{\bf Case 1}: $q, r\in Q_{j}$ for some~$j\in I$.

\smallskip
Since~$(Q_{j},\mathscr{K}_{j})$ is discriminative, it follows from Theorem~\ref{fff} that, without loss of generality, there exists~$C_{q}\in \mu^{*}_{j}(q)\subseteq \mu(q)$ with~$C_{q}\not\preccurlyeq \mu^{*}_{j}(r)$, then for any~$C_{r}\in \mu^{*}_{j}(r)$ there exists~$s_{C_{r}}\in S$ with $C_{q}(s_{C_{r}})<C_{r}(s_{C_{r}})$. Since~$\{S^{*}_{i}|i\in I\}$ is pairwise disjoint, we conclude that $C_{q}\cap (\bigcup_{i\in I\backslash \{j\}}\{\frac{1}{s_{k}}, \frac{0}{s_{l}}\}_{k\in J_{i}, l\in J\backslash J_{i}})=\emptyset$, hence $C_{q} \cap (\bigcup_{i\in I\backslash \{j\}}(\bigcup \mu^{*}_{i}(r)))=\emptyset$. Therefore, for each~$C_{r}\in \bigcup_{i\in I}(\bigcup\mu^{*}_{i}(r))=\mu(r)$, there exists~$s_{C_{r}}\in S$ with $C_{q}(s_{C_{r}})<C_{r}(s_{C_{r}})$. Then it follows that $C_{q}\not\preccurlyeq \mu(r)$, hence $\mu(q)\not\preccurlyeq \mu(r)$.

\smallskip
{\bf Case 2}:  $q\in Q_{j}$ and~$r\notin Q_{j}$ for some~$j\in I$.

\smallskip
Take any $C_{q}\in \mu^{*}_{j}(q)\subseteq \mu(q)$. Because the family $\{S_{i}|i\in I\}$ is pairwise disjoint, it follows that $C_{q}\cap (\bigcup_{i\in I\backslash \{j\}}\{\frac{1}{s_{k}}, \frac{0}{s_{l}}\}_{k\in J_{i}, l\in J\backslash J_{i}})=\emptyset$. From $\mu^{*}_{j}(r)=\emptyset$, we have $C_{q}\cap (\bigcup_{i\in I}(\bigcup \mu^{*}_{i}(r)))=\emptyset$. From $\bigcup_{i\in I}(\bigcup \mu^{*}_{i}(r))=\mu(r)$, we conclude that $\mu(q)\not\preccurlyeq \mu(r)$.

From Theorem~\ref{fff}, $(Q,\mathscr{K})$ is discriminative.
\end{proof}

The following theorem is immediate from Lemmas~\ref{bb} and~\ref{dddd}, which is a precisely answer to Question~\ref{kk} with regard to the separability (resp. bi-separability) of items.

\begin{thm}~\label{eeee}
Let~$\{Q_{i}|i\in I\}$ and~$\{S_{i}|i\in I\}$ be two pairwise disjoint family. Then the following are equivalent.

\smallskip
(1)~$(Q,\mathscr{K})$ is discriminative (resp. bi-discriminative).

\smallskip
(2)~each $(Q_{i}, \mathscr{K}_{i})$ is discriminative (resp. bi-discriminative).
\end{thm}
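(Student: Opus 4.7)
The plan is to derive this theorem by directly assembling the two preceding lemmas, since each hypothesis (pairwise disjointness of the problem domains $\{Q_i\}$ and pairwise disjointness of the skill domains $\{S_i\}$) matches the hypothesis of exactly one of them. No new machinery should be necessary: the work has already been done in Lemmas~\ref{bb} and~\ref{dddd}, and the theorem is essentially their conjunction under the stronger joint hypothesis.

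First, for the direction $(1)\Longrightarrow (2)$, I would invoke Lemma~\ref{bb}. That lemma requires only that $\{Q_i\mid i\in I\}$ be pairwise disjoint, which is part of our hypothesis. Assuming $(Q,\mathscr{K})$ is discriminative (resp.\ bi-discriminative), Lemma~\ref{bb} directly yields that each $(Q_i,\mathscr{K}_i)$ inherits the same property; in the bi-discriminative case the proof of Lemma~\ref{bb} is analogous and the resp.\ clause is implicit there. The skill-domain disjointness is not needed for this direction, so it is simply left unused.

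Next, for the direction $(2)\Longrightarrow (1)$, I would invoke Lemma~\ref{dddd}. That lemma requires only that $\{S_i\mid i\in I\}$ be pairwise disjoint, which is again part of our hypothesis. From the assumption that each $(Q_i,\mathscr{K}_i)$ is discriminative (resp.\ bi-discriminative), Lemma~\ref{dddd} delivers that $(Q,\mathscr{K})$ has the same property. Here the problem-domain disjointness plays no role.

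No genuine obstacle is expected, since the entire content of the proof is bookkeeping: verify that the hypotheses of each lemma are a subset of the joint hypothesis of the theorem, apply each in its appropriate direction, and note that the argument for the bi-discriminative case runs in parallel with the discriminative case (as it does in both lemmas). The proof should therefore be written very briefly, essentially as a two-sentence citation of Lemmas~\ref{bb} and~\ref{dddd}, with a remark that the two disjointness hypotheses are used separately in the two directions of the equivalence.
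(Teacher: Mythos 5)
Your proposal is correct and matches the paper exactly: the paper states that Theorem~\ref{eeee} is immediate from Lemmas~\ref{bb} and~\ref{dddd}, using the disjointness of $\{Q_{i}\}$ for $(1)\Rightarrow(2)$ via Lemma~\ref{bb} and the disjointness of $\{S_{i}\}$ for $(2)\Rightarrow(1)$ via Lemma~\ref{dddd}. Nothing further is needed.
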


The following two examples show that the reverse of Lemmas~\ref{bb} and~\ref{dddd} do not hold respectively.

\begin{ex}
Let~$(Q_{i},S_{i},\mu_{i})$ for each~$i\in\{1,2\}$ and~$(Q,S,\mu)$ be described as follows.
\begin{equation*}
\begin{aligned}
&Q_{1}=\{a,b\}, S_{1}=\{s_{1},s_{2},s_{3},s_{4}\}, \\
&\mu_{1}(a)=\{\{\frac{0.1}{s_{1}}, \frac{0.7}{s_{2}}\}, \{\frac{0.4}{s_{2}}, \frac{0.6}{s_{3}}\}\},\\
&\mu_{1}(b)=\{\{\frac{0.2}{s_{1}}, \frac{0.5}{s_{3}}\}, \{\frac{0.5}{s_{3}}, \frac{0.5}{s_{4}}\}\};\\
&Q_{2}=\{c,d\}, S_{2}=\{s_{1},s_{3},s_{4},s_{5}\},\\
&\mu_{2}(c)=\{\{\frac{0.2}{s_{1}}, \frac{0.5}{s_{3}}\}, \{\frac{0.5}{s_{3}}, \frac{0.5}{s_{4}}\}\},\\
&\mu_{2}(d)=\{\{\frac{0.2}{s_{1}}, \frac{0.5}{s_{4}}\}, \{\frac{0.5}{s_{3}}, \frac{0.5}{s_{5}}\}\};\\
&Q=Q_{1}\cup Q_{2}, S=S_{1}\cup S_{2}, \mu(a)=\mu^{*}_{1}(a), \mu(b)=\mu^{*}_{1}(b),\\
&\mu(c)=\mu^{*}_{2}(c), \mu(d)=\mu^{*}_{2}(d).
\end{aligned}
\end{equation*}

They delineate the knowledge structures
\begin{equation*}
\begin{aligned}
&\mathscr{K}_{1}=\{\emptyset, \{a\}, \{b\}, \{a, b\}\};\\
&\mathscr{K}_{2}=\{\emptyset, \{c\}, \{d\}, \{c, d\}\};\\
&\mathscr{K}=\{\emptyset, \{a\}, \{d\}, \{b, c\}, \{a, d\}, \{a, b, c\}, Q\}.
\end{aligned}
\end{equation*}

Clearly,~$(Q,S,\mu)$ is the distributed fuzzy skill function of~$\{(Q_{i},S_{i},\mu_{i})|i\in\{1,2\}\}$ and~$Q_{1}\cap Q_{2}=\emptyset$. Clearly,~$(Q_{1},\mathscr{K}_{1})$ and~$(Q_{2},\mathscr{K}_{2})$ delineated by~$(Q_{1},S_{1},\mu_{1})$ and~$(Q_{2},S_{2},\mu_{2})$ respectively are both bi-discriminative. However, the knowledge structure~$(Q,\mathscr{K})$ delineated by~$(Q,S,\mu)$ is not discriminative since~$\mathscr{K}_{b}=\mathscr{K}_{c}$. From another perspective,~$(Q_{1},\mathscr{K}_{1})$ is bi-discriminative since~$\mu_{1}(a)\not\preccurlyeq \mu_{1}(b)$ and~$\mu_{1}(b)\not\preccurlyeq \mu_{1}(a)$ by Theorem~\ref{1-3}. Moreover, $(Q_{2},\mathscr{K}_{2})$ is bi-discriminative in the same way. However,~$(Q,\mathscr{K})$ is not discriminative since $\mu^{*}(b)=\mu^{*}(c)$.
\end{ex}

\begin{ex}
Let~$(Q_{i},S_{i},\mu_{i})$ for each $i\in\{1,2,3\}$ and~$(Q,S,\mu)$ be described as follows.
\begin{equation*}
\begin{aligned}
&Q_{1}=\{a,b\}, S_{1}=\{s_{1}\}, \mu_{1}(a)=\mu_{1}(b)=\{\{\frac{0.2}{s_{1}}\}\};\\
&Q_{2}=\{b,c\}, S_{2}=\{s_{2}\}, \mu_{2}(b)=\mu_{2}(c)=\{\{\frac{0.4}{s_{2}}\}\};\\
&Q_{3}=\{a,c\}, S_{3}=\{s_{3}\}, \mu_{3}(a)=\mu_{3}(c)=\{\{\frac{0.6}{s_{3}}\}\};\\
&Q=Q_{1}\bigcup Q_{2}\bigcup Q_{3}, S=S_{1}\bigcup S_{2}\bigcup S_{3}, \mu^{*}(a)=\{\{\frac{0.2}{s_{1}}\}, \{\frac{0.6}{s_{3}}\}\},\\ &\mu^{*}(b)=\{\{\frac{0.2}{s_{1}}\}, \{\frac{0.4}{s_{2}}\}\},
\mu^{*}(c)=\{\{\frac{0.4}{s_{2}}\}, \{\frac{0.6}{s_{3}}\}\}.
\end{aligned}
\end{equation*}

Clearly,~$(Q,S,\mu)$ is the distributed fuzzy skill function of~$\{(Q_{i},S_{i},\mu_{i})|i\in\{1,2,3\}\}$ and~$\{S_{i}|i\in\{1,2,3\}\}$ is pairwise disjoint. By Theorem~\ref{fff},~$(Q_{i},\mathscr{K}_{i})(i\in\{1,2,3\})$ is not discriminative since~$\mu_{1}(a)\not\preccurlyeq \mu_{1}(b)$ and~$\mu_{1}(b)\not\preccurlyeq \mu_{1}(a)$. However, it follows from Theorem~\ref{1-3} that $(Q,\mathscr{K})$is bi-discriminative.
\end{ex}

\section{Conclusions}

This present paper gives the discussions about the knowledge structures delineated by fuzzy multimaps. Around Question~\ref{1-5} inspired by a question in~\cite[problem 6 in Chapter 6 ]{falmagne2011learning}, section 3.1 draws characterizations of fuzzy skill multimaps, ensuring the knowledge structures delineated by fuzzy skill multimaps are knowledge space, simple closure space, and learning space respectively. Around Question~\ref{kkk}, section 3.2, which generalizes and extends the results about the separability and bi-separability~\cite{ref4}, discusses the separability and the bi-separability of items in knowledge structures delineated by fuzzy skill multimaps respectively.

Let~$(Q,\mathscr{K})$ be delineated by the distributed fuzzy skill multimaps ~$(Q,S,\mu)$. Theorem~\ref{fff}, which is a generalization of ~\cite[Theorem 1]{ref4}, shows that the separability of items in~$(Q,\mathscr{K})$ iff, for any~$q,r\in Q$ with~$q\neq r$, $\mu(q)\not\preccurlyeq \mu(r)$ or~$\mu(r)\not\preccurlyeq \mu(q)$. Considering the minimum elements of~$\mu(q)$ for each~$q\in Q$, we prove in Theorem~\ref{ttt} that the separability of items in~$(Q,\mathscr{K})$ iff, for any~$q,r\in Q$ with~$q\neq r$, either there exists $C\in\mu(q)$ with $M_{q, C}\neq M_{r, D}$ for any $D\in\mu(r)$, or there exists $D\in\mu(r)$ with $M_{r, D}\neq M_{q, C}$ for any $C\in\mu(q)$. These theorems give characterizations of fuzzy skill multimap~$(Q,S,\mu)$ such that~$(Q,\mathscr{K})$ delineated by~$(Q,S,\mu)$ is discriminative. In general, the ability to answer items can inferred from individual's level of proficiency in skills and the level of proficiency in skills required to solve items. For instance, for any competence~$C\in\mu(q)$, there exists~$C'\in\mu(r)$ with $C'\subseteq C$. If an individual can master item~$q$, then we can infer that the individual can master item~$r$ without repeated testing. Moreover, without repeated testing can reduce human behavior~\cite[Page 96]{falmagne2011learning} in testing situation for knowledge assessment, which is efficient to construct knowledge structure.

In addition, section 3.3 gives a precise answer to Question~\ref{kk} with reference to the separability (resp. bi-separability) of items in knowledge structures delineated by fuzzy skill multimaps, where Question~\ref{kk} was raised by Heller and Repitsch~\cite{ref22}. Lemma~\ref{bb} gives a sufficient condition such that substructures can inherit the separability (resp.  bi-separability) from parent knowledge structure, and Lemma~\ref{dddd} gives a sufficient condition to ensure parent knowledge structure can preserve the separability (resp. bi-separability) from substructures. In further practical experiments, if both~$\{Q_{i}|i\in I\}$ and~$\{S_{i}|i\in I\}$ are pairwise disjoint, the separability (resp.  bi-separability) of substructures is quickly inferred from parent knowledge structure, and the separability (resp.  bi-separability) of parent knowledge structure can also deduce from substructures.

\end{document}